\def\mf#1{\mathfrak{#1}}
\def\mc#1{\mathcal{#1}}
\def\mb#1{\mathbb{#1}}
\def\tx#1{{\rm #1}}
\def\tb#1{\textbf{#1}}
\def\R{\mathbb{R}}
\def\C{\mathbb{C}}
\def\Q{\mathbb{Q}}
\def\Z{\mathbb{Z}}
\def\lmod{\setminus}
\def\ol#1{\overline{#1}}
\def\rk{\tx{rk}}
\def\hat{\widehat}
\def\rw{\rightarrow}
\def\lw{\leftarrow}
\def\lrw{\longrightarrow}
\def\hrw{\hookrightarrow}
\def\thrw{\twoheadrightarrow}
\def\lw{\leftarrow}
\def\sm{\smallsetminus}
\def\<{\langle}
\def\>{\rangle}
\newenvironment{mytitle}
{\begin{center}\large\sc}
{\end{center}}
\newtheorem{thm}{Theorem}[section]
\newtheorem{lem}[thm]{Lemma}
\newtheorem{pro}[thm]{Proposition}
\newtheorem{cor}[thm]{Corollary}
\newtheorem{fct}[thm]{Fact}
\numberwithin{equation}{section}
\begin{document}

\begin{mytitle} Simple wild $L$-packets \end{mytitle}

\section*{Introduction} \label{sec:intro}


The local Langlands correspondence predicts a relationship between the irreducible smooth representations of a $p$-adic group $G$ and the representations of the local Weil-Deligne group into the $L$-group of $G$; these latter representation are usually called Langlands parameters. While this relationship remains conjectural, recently a lot of progress has been made towards establishing it. Led by the framework of conjectures surrounding the local Langlands correspondence, in particular the formal degree conjecture of Hiraga-Ichino-Ikeda, Gross and Reeder study in their recent paper \cite{GrRe10} arithmetic properties of discrete Langlands parameters. They single out a certain class of them, which they call \emph{simple wild parameters}, and determine much of their structure. Their study also points them to a certain class of smooth irreducible representations of $p$-adic groups, which they call \emph{simple supercuspidal representations} and for which they provide an explicit construction. The authors then conjecture that the two classes -- the simple wild parameters and the simple supercuspidal representations -- ought to correspond under the conjectural local Langlands correspondence.

In this paper, under mild restrictions on the residual characteristic of $F$, we explicitly realize the correspondence between simple wild parameters and simple supercuspidal representations. Starting from a simple wild parameter, we construct a finite set of simple supercuspidal representations and then identify it with the group of characters of the (finite) centralizer of the parameter, in accordance with the conjectural structure of $L$-packets. Our construction is compatible with finite unramified extensions of the base field. We are able to show the stability of these $L$-packets for an open subset of regular semi-simple elements without further restrictions on the residue field of $F$. We have learned from Stephen DeBacker that the stability of these $L$-packets for all regular semi-simple elements follows from the recent character formulas of Adler-Spice, under the assumption that the residual characteristic is sufficiently large.

An important motivation for studying simple supercuspidal representations and their Langlands parameters and $L$-packets comes from the special role they play in global applications. In \cite{Gr}, Gross studies the sum of multiplicities of cuspidal automorphic representations of a simple algebraic group $G$ which have a prescribed local behavior at a fixed finite set of places and are unramified elsewhere. The local components of these representations at the finite places in the fixed finite set are set to be either Steinberg or simple supercuspidal. Using the trace formula, Gross shows that the sum of multiplicities of these cuspidal automorphic representations can be expressed by special values of modified Artin L-functions of the motive attached to $G$. When $G$ is simply-connected and defined over a global function field, Gross' work implies that there should be a unique cuspidal automorphic representation whose local components are unramified except at two places, at one of which the component is Steinberg and at the other it is simple supercuspidal. This was later proved by Heinloth, Ngo, and Yun, in \cite{HeNgYu}, where they use this representation to construct an interesting local system on $\mb{G}_m$. The local systems obtained in this way generalize the sheaves constructed by Deligne \cite{SGA4.5}, which geometrize Klooserman sums.

We will now briefly sketch the construction of the simple wild $L$-packets and describe the contents of the paper. Let $F$ be a $p$-adic field with Weil-group $W_F$ and let $G$ be a split simple simply-connected group with complex dual group $\hat G$. The notions of a simple supercuspidal representation and a simple wild parameter are reviewed in Section \ref{sec:simple}. Under the assumption that the residual characteristic $p$ of $F$ does not divide the order of the Weyl group of $G$, Gross and Reeder give a precise analysis of the structure of simple wild parameters. Part of this analysis is summarized at the end of Section \ref{sec:simple}. In this paper, we will impose the weaker condition that $p$ does not divide the Coxeter number of $G$, and work with parameters which satisfy the conditions listed at the end of Section \ref{sec:simple}. Starting from such a parameter
\[ \phi : W_F \rw \hat G \]
we first construct a tamely ramified anisotropic torus $S$ defined over $F$ and a stable conjugacy class of embeddings $S \rw G$. These embeddings, which we call \emph{embeddings of type (C)}, have very special properties. Their study is the backbone of the construction and takes place in Section \ref{sec:embc}. One of the properties of an embedding of type (C) is that the point in the building of $G(F)$ associated to it is the barycenter of an alcove. This already hints at a connection with simple supercuspidal representations, since barycenters of alcoves play a central role in their construction. Further properties include a precise description of the structure of the Moy-Prasad filtration of $S(F)$ as well as a result on the Galois cohomology of $S$, which implies that two stably conjugate embeddings of type (C) are rationally conjugate under the adjoint group $G_\tx{ad}$ of $G$.

The construction of simple supercuspidal $L$-packets is the subject of Section \ref{sec:pack}. Dual to the stable class of embeddings $S \rw G$ is a $\hat G$-conjugacy class of embeddings $^LS \rw \hat G$, where $^LS$ is the $L$-group of the torus $S$. The parameter $\phi$ then factors as
\begin{diagram}
^LS&\rTo^{^Lj}&\hat G\\
\uTo<{\phi_S}&\ruTo^\phi\\
W_F
\end{diagram}
The parameter $\phi_S$ provides a character $\chi : S(F) \rw \C^\times$. The $\hat G$-class of $^Lj$ is not quite canonical, its construction involves an auxiliary choice. However, the results on the structure of $S(F)$ established in Section \ref{sec:embc} allow us to conclude that the $G_\tx{ad}(F)$-conjugacy class of pairs $(j,\chi)$ depends only on the parameter $\phi$. Given such a pair, we consider the quotient $Z(F)G(F)_{x,1/h}/G(F)_{x,2/h}$, where $x$ is the point in the building associated to $j$ and $h$ is the Coxeter number of $G$, and construct from $\chi$ an affine generic character on that quotient. This is the input from which the construction of Gross and Reeder produces a simple supercuspidal representation. In summary, we obtain from the parameter $\phi$ a $G_\tx{ad}(F)$-conjugacy class of pairs $(j,\chi)$ and hence a $G_\tx{ad}(F)$-conjugacy class of simple supercuspidal representations. This latter conjugacy class is the $L$-packet anticipated by Gross and Reeder. We would like to remark that the construction of the affine generic character from $\chi$ is influenced by the work of Adler \cite{Ad98}. In fact, the pair $(j,\chi)$ can be used directly to produce a representation of $G(F)$ via Adler's construction. We have chosen the path through affine generic characters instead in order to show that the packets we obtain are the ones expected by Gross and Reeder.

In Section \ref{sec:int} we study the internal structure of simple wild $L$-packets. More precisely, given a parameter $\phi$, we provide a bijection between the corresponding packet and the Pontryagin dual of the finite abelian group $\tx{Cent}(\phi,\hat G)$. Such a bijection is not unique, it depends on the choice of a Whittaker datum for $G$. The instrumental result in this section is the fact that for a fixed Whittaker datum there is precisely one generic representation in $\Pi_\phi$. This is the statement of the generic packet conjecture -- a property expected to hold for any tempered $L$-packet. Because of its importance, we provide two alternative proofs -- a short one using analytic methods and relying on deep results of Moeglin-Waldspurger and Shelstad, which we have borrowed from the work of DeBacker and Reeder, and a longer but elementary one.

In Section \ref{sec:unram} we study the behavior of our construction under finite unramified extensions of the base field. Gross and Reeder conjecture a certain compatibility with respect to such an extension, and based on that compatibility deduce that simple wild parameters and simple supercuspidal representations should correspond. The result of Section \ref{sec:unram} is that our construction satisfies a compatibility of the expected kind.

The final Section \ref{sec:stab} deals with the stability of the simple wild $L$-packets. We prove, without further restrictions on $F$, that the sum of characters in each $L$-packet is stable on all elements belonging to the image of an embedding of type (C). Moreover, this property does not hold for any subset of a simple wild $L$-packet.

\tb{Acknowledgements:} The author would like to thank Benedict Gross and Mark Reeder for their encouragement and interest in this work, and for stimulating mathematical conversations. The author would also like to thank Stephen DaBacker and Loren Spice for explaining to him the structure of the characters of simple supercuspidal representations and its connection to the stability problem, as well as Thomas Haines and Jared Weinstein for helpful and stimulating conversations. Finally, the support of the National Science Foundation%
\footnote{This material is based upon work supported by the National Science Foundation under agreement No. DMS-0635607. Any opinions, findings and conclusions or recommendations expressed in this material are those of the author and do not necessarily reflect the views of the National Science Foundation.}
is gratefully acknowledged.

\tableofcontents

\section{Notation and preliminaries} \label{sec:notpre}

Throughout the paper, $F$ will denote a $p$-adic field, i.e., a finite extension of $\Q_p$. Its ring of integers and residue field will be $O_F$ and $k_F$. The characteristic of the finite field $k_F$ will be denoted by $p$, and its cardinality by $q$. We will write $\Gamma$ or $\Gamma_F$ for the absolute Galois group of $F$, $W_F \supset I_F \supset P_F$ for the Weil group and its inertia and wild inertia subgroups. Similar notations will be used for any other $p$-adic field $E$, with the appropriate subscript changed. If $E/F$ is a finite extension, we will write $\Gamma(E/F)$, $W(E/F)$, $I(E/F)$ for the relative Galois, Weil, and inertia groups. We will write $v : F^\times \rw \Z$ for the normalized valuation of $F$, and we will write $v : E^\times \rw e(E/F)^{-1}\Z$ for its unique extension to $E$, where $e(E/F)$ is the ramification degree of the extension $E/F$.

Given an algebraic group $G$ defined over $F$, we will write $G(R)$ for the set of points of $G$ with values in an $F$-algebra $R$. The letter $Z$ will denote the center of $G$. More generally, given a subset $S \subset G$, we will write $\tx{Cent}(S,G)$ for the subgroup of $G$ centralizing $S$ and $N(S,G)$ for the subgroup normalizing $S$. The Lie-algebra of $G$ will be denoted by the Fraktur letter $\mf{g}$. When $G$ is semi-simple, we will write $\mc{B}(G,F)$ for the Bruhat-Tits building of $G$ relative to $F$, and $\mc{A}(T,F)$ for the apartment of a given maximal torus $T$. Given a point $x \in \mc{B}(G,F)$ we will write $G(F)_x$ for the full stabilizer of the point $x$ for the action of $G(F)$ on $\mc{B}(G,F)$. Given a real number $r \geq 0$, we will write $G(F)_{x,r}$ for the Moy-Prasad subgroup of $G(F)$. In particular, $G(F)_{x,0}$ is the parahoric subgroup of $G(F)$ associated to $x$, which is a subgroup of finite index in $G(F)_x$. For any real number $r$ we also have the Moy-Prasad lattices $\mf{g}(F)_{x,r}$.

Assume that $G$ is split and $T$ is a split maximal torus. We will write $\Omega(T,G)$ for the Weyl group $N(T,G)/T$. One has $\Omega(T,G)(F)=\Omega(T,G)$. We will write $\Omega_a(T,G)$ for the affine Weyl group $N(T,G)(F)/T(F)_b$, where $T(F)_b$ is the maximal bounded subgroup of $T$. A hyperspecial vertex $o \in \mc{A}(T,F)$ endows both $T$ and $G$ with smooth connected $O_F$-models. When such a vertex is chosen, we will assume this $O_F$-structure understood, i.e. we will reuse the letters $T$ and $G$ for the $O_F$-models of $T$ and $G$. We then have $T(F)_b=T(O_F)$. The subgroup of $\Omega_a(T,G)$ which fixes $o$ will be denoted by $\Omega_a(T,G)_o$. The composition
\[ \Omega_a(T,G)_o \hrw \Omega_a(T,G) \thrw \Omega(T,G) \]
is an isomorphism. As usual, $X^*(T)$ and $X_*(T)$ will be the groups of characters and cocharacters of $T$. We will write $Q \subset P \subset X^*(T) \otimes \Q$ for the lattices of roots and weights, and correspondingly $Q^\vee \subset P^\vee \subset X_*(T)$ for the lattices of coroots and coweights.

We would like to recall the following well-known facts, which we will use.
\begin{fct}
Let $h$ be a positive integer coprime to $p$. The following are equivalent
\begin{enumerate}
\item $F$ has a primitive $h$-th root of unity,
\item $k_F$ has a primitive $h$-th root of unity,
\item there exists a totally ramified Galois extension $E/F$ of degree $h$.
\end{enumerate}
In that case, $E=F(\omega)$ where $\omega$ is a uniformizer in $E$ and $\frac{\sigma(\omega)}{\omega} \in \mu_h(F)$ for all $\sigma \in \Gamma(E/F)$.
\end{fct}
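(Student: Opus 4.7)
The plan is to prove $(1)\Leftrightarrow(2)$ by Hensel's lemma, then $(2)\Rightarrow(3)$ by a direct construction via an Eisenstein polynomial, and finally $(3)\Rightarrow(2)$ together with the closing assertion by exploiting the tame character attached to a uniformizer of $E$ combined with Kummer theory.

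For $(1)\Leftrightarrow(2)$: The reduction map $\mu_h(F)\to k_F^\times$ is injective because if $\zeta\in\mu_h(F)$ lies in $1+\mf{m}_F$, then writing $\zeta=1+\pi^n a$ with $a\in O_F^\times$, the identity $\zeta^h=1$ combined with the binomial expansion and the fact that $h$ is a unit in $O_F$ (since $p\nmid h$) forces $a$ to lie in $\mf{m}_F$, contradicting the choice of $n$ unless $\zeta=1$. Surjectivity onto $h$-th roots of unity in $k_F^\times$ follows from Hensel's lemma applied to $X^h-1$, which is separable modulo $p$.

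For $(2)\Rightarrow(3)$: Fix a uniformizer $\pi$ of $F$ and take $E:=F(\omega)$ with $\omega^h=\pi$. The polynomial $X^h-\pi$ is Eisenstein, so $E/F$ is totally ramified of degree $h$ and $\omega$ is a uniformizer of $E$. Since $\mu_h\subset F$ by (1), all $h$-th roots of $\pi$ lie in $E$, so $E/F$ is Galois with $\Gamma(E/F)\cong\mu_h$.

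For $(3)\Rightarrow(2)$ and the closing assertion: Pick any uniformizer $\pi_E$ of $E$. For $\sigma\in\Gamma(E/F)$, the element $\sigma(\pi_E)/\pi_E$ lies in $O_E^\times$; since $E/F$ is totally ramified, $k_E=k_F$ and $\sigma$ acts trivially on residues, so reduction gives a homomorphism $\chi:\Gamma(E/F)\to k_F^\times$. Injectivity follows from a standard Moy--Prasad/ramification-filtration argument: the kernel of $\chi$ is the wild inertia subgroup, which has order a power of $p$ and is therefore trivial since $h$ is coprime to $p$. Hence $\Gamma(E/F)$ embeds as a subgroup of order $h$ in $k_F^\times$, proving (2). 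Once (1) is established, Kummer theory applies: $E=F(\omega)$ with $\omega^h=\alpha\in F^\times$, and the Galois action is $\sigma(\omega)=\zeta_\sigma\omega$ with $\zeta_\sigma\in\mu_h(F)$. Writing $v(\alpha)=m$, the extension has degree $h$ only if $\gcd(m,h)=1$; choosing $a,b\in\Z$ with $am+bh=1$ and replacing $\alpha$ by $\alpha^a\pi^{bh}$ (which yields the same Kummer extension because $\gcd(a,h)=1$) adjusts $v(\alpha)$ to $1$, making the corresponding $\omega$ a uniformizer of $E$ with $\sigma(\omega)/\omega\in\mu_h(F)$.

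The only real obstacle is the injectivity of the tame character $\chi$; everything else is either Hensel or Kummer theory. The hypothesis $p\nmid h$ is used in exactly three places: to lift roots of $X^h-1$, to ensure the absence of wild inertia, and to solve $am+bh=1$ so that the Kummer generator can be normalized to a uniformizer.
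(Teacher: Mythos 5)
The paper records this Fact without proof (it is recalled as well-known), so there is no internal argument to compare against; the question is only whether your proof is sound. Most of it is: the equivalence of (1) and (2) via injectivity of reduction on prime-to-$p$ roots of unity plus Hensel's lemma, the implication (2)$\Rightarrow$(3) via the Eisenstein polynomial $X^h-\pi$, and (3)$\Rightarrow$(2) via the character $\sigma\mapsto \sigma(\pi_E)/\pi_E \bmod \mf{m}_E$, whose kernel is the wild (hence $p$-power order) part of inertia, are all correct and standard -- although the invocation of ``Moy--Prasad'' is misplaced; what you are using is the ramification filtration in the sense of Serre's \emph{Local Fields}, Ch.\ IV.

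There is, however, a genuine gap in the last step. You assert, with no argument, that ``the extension has degree $h$ only if $\gcd(m,h)=1$,'' where $m=v(\alpha)$. As stated this is false, even under the standing hypotheses: when $\mu_h\subset F$ there exist units $u\in O_F^\times$ whose class in $F^\times/(F^\times)^h$ has order $h$, and then $F(u^{1/h})/F$ is a degree-$h$ Kummer extension with $m=0$. What you actually need is that \emph{total ramification} of $E/F$ forces $\gcd(m,h)=1$, and that requires an argument you have not supplied. A short repair, in the spirit of your own tame character $\chi$: since $E/F$ is totally ramified, $v_E(\omega)=m$, so write $\omega=\pi_E^m u$ with $u\in O_E^\times$; reducing modulo $\mf{m}_E$ and using that $\sigma$ acts trivially on $k_E=k_F$ gives $[\sigma(\omega)/\omega]=\chi(\sigma)^m$ in $k_F^\times$. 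The map $\sigma\mapsto\sigma(\omega)/\omega$ is injective (if $\sigma$ fixes $\omega$ it fixes $E=F(\omega)$) and reduction is injective on $\mu_h(F)$, so $\chi^m$ is injective on a cyclic group of order $h$, which forces $\gcd(m,h)=1$; your Bezout normalization then goes through. Alternatively, one can bypass Kummer theory for the closing assertion altogether: starting from any uniformizer $\pi_E$, write $\pi_E^h=\pi u$ with $u\in O_E^\times$, use $k_E=k_F$ and Hensel's lemma (with $h$ a unit) to replace $\pi_E$ by $\omega=\pi_E/w$ so that $\omega^h$ is a uniformizer of $F$; Eisenstein gives $E=F(\omega)$, and $\sigma(\omega)/\omega\in\mu_h(E)=\mu_h(F)$, which proves the final sentence and (3)$\Rightarrow$(1) in one stroke.
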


\begin{fct} Let $R$ be a reduced irreducible root system. Then all bad primes, all torsion primes, and all primes dividing the connection index of $R$ also divide the Coxeter number of $R$.
\end{fct}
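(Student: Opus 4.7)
The proof proceeds by case analysis using the classification of reduced irreducible root systems: types $A_n$, $B_n$, $C_n$, $D_n$, $G_2$, $F_4$, $E_6$, $E_7$, $E_8$. For each type, the Coxeter number $h$, the bad primes, the torsion primes, and the connection index $[P:Q]$ are all classical invariants of the root system, tabulated for instance in Bourbaki's \emph{Groupes et algèbres de Lie}, Planches I--IX. The first step is to assemble this data into a table.

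For the classical types, the bad primes and torsion primes reduce to $\{2\}$ (and are empty in type $A$), while the Coxeter numbers are $h = n+1$, $2n$, $2n$, $2n-2$ in types $A$, $B$, $C$, $D$ respectively, so $2 \mid h$ outside type $A$ and there is nothing to check in type $A$. The connection indices are $n+1$, $2$, $2$, $4$; the first coincides with $h$ in type $A$, and the others contribute only the prime $2$, which already divides $h$. For the exceptional types, the bad primes and torsion primes lie in $\{2,3\}$ for $G_2, F_4, E_6, E_7$ and in $\{2,3,5\}$ for $E_8$, and these are precisely the prime divisors of the Coxeter numbers $6, 12, 12, 18, 30$ respectively. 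The connection indices $1, 1, 3, 2, 1$ are subsumed by the same list.

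Once the table is assembled, the verification is entirely routine and there is no real obstacle beyond bookkeeping; the one thing to be careful about is adhering to a single convention for the list of torsion primes, since several mildly differing definitions circulate in the literature. A more uniform argument might try to exploit the identity $h = 1 + \sum_i n_i$, where the $n_i$ are the coefficients of the highest root $\tilde{\alpha} = \sum_i n_i \alpha_i$, together with the characterization of bad primes as the prime divisors of the $n_i$; but this identity does not by itself yield $p \mid h$ from $p \mid n_i$, and the analogous uniform statements for the torsion primes and the connection index appear to require the classification anyway. I would therefore simply present the case-by-case tabulation as the proof.
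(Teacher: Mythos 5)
Your case-by-case verification is correct: the tabulated Coxeter numbers, bad primes, torsion primes, and connection indices all check out against Bourbaki's planches, and the paper itself states this as a well-known Fact without giving any proof, so the classification argument you give is exactly the standard (and essentially the only) verification. One small simplification: by Steinberg's result the torsion primes of an irreducible root system are always among the bad primes, so that column of your table is subsumed by the bad-prime check.
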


\section{Simple wild parameters and simple supercuspidal representations} \label{sec:simple}

In this section we would like to give a brief review of simple wild parameters and simple supercuspidal representations, following the work of Gross and Reeder \cite{GrRe10}. Let $G$ be a split, semi-simple, simply-connected group defined over $F$ and let $x \in \mc{B}(G,F)$ be the barycenter of some alcove $C$. Let $h$ denote the Coxeter number of $G$. Assume that $p$ does not divide $h$. One consequence of this assumption is the following: If $o$ is a special vertex in $\mc{B}(G,F)$ then the natural $O_F$-structure on the center $Z$ of $G$ is etale and $Z(F)=Z(O_F)=Z(k_F)$.

The group $G(F)_x$ is an Iwahori subgroup of $G(F)$, and $G(F)_{x,1/h}$ is its pro-unipotent radical. Choose a maximal torus $T$ whose apartment $\mc{A}(T,F)$ contains $C$. Then we have a direct sum decomposition of $k_F$-vector spaces
\[ G(F)_{x,\frac{1}{h}}/G(F)_{x,\frac{2}{h}} = \bigoplus U_\alpha/U_{\alpha+1} \]
where $\alpha$ runs over the $C$-simple affine roots, and $U_\alpha \subset G(F)$ is the corresponding affine root subgroup. Gross and Reeder define \cite[9.2]{GrRe10} an \emph{affine generic character} to be a character
\[ \chi: Z(F)G(F)_{x,\frac{1}{h}} \rw \C^\times \]
whose restriction to each $U_\alpha$ is the inflation of a non-trivial character of $U_\alpha/U_{\alpha+1}$. Given such a character $\chi$, it is shown in \cite[9.3]{GrRe10} that
\[ \pi = \textrm{c-Ind}_{Z(F)G(F)_{x,\frac{1}{h}}}^{G(F)} \chi \]
is an irreducible supercuspidal representation of $G(F)$. These are the \emph{simple supercuspidal representations}. In \cite[\S9.5]{GrRe10}, Gross and Reeder consider the orbits of $G_\tx{ad}(F)$ in the set of these representations. Each such orbit has order equal to that of $Z(F)$, and the authors conjecture that it should constitute an $L$-packet, and moreover the parameter of this $L$-packet should be a simple wild parameter -- an object we will now describe.

Let $\phi : W_F \rw \hat G$ be a continuous homomorphism whose image consists of semi-simple elements. Composing $\phi$ with the adjoint representation we obtain a representation of $W_F$ on the Lie-algebra $\mf{\hat g}$ of $\hat G$. Let us restrict this representation to the inertia subgroup $I_F$. The image of $I_F$ in $\tx{Aut}(\mf{\hat g})$ is a finite group $D_0$, which is the Galois group of a finite extension $L$ of the maximal unramified extension $F^u$ of $F$. This Galois group has the lower ramification filtration
\[ \{1\} = D_n \subset D_{n-1} \subset \dots \subset D_0. \]
The Swan conductor of the representation of $W_F$ on $\mf{\hat g}$ is defined to be the number
\[ b(\phi) := \sum_{j>0} \dim(\mf{\hat g}/\mf{\hat g}^{D_j})\frac{|D_j|}{|D_0|}. \]
The homomorphism $\phi$ is called a \emph{simple wild parameter} if it satisfies the following two conditions:
\begin{itemize}
\item $\phi(I_F)$ has no non-trivial invariants in $\mf{\hat g}$,
\item $b(\phi) = \rk(G)$.
\end{itemize}

Under the assumption that $p$ does not divide the order of the Weyl group, Gross and Reeder \cite[Prop. 5.6 and Prop. 9.4]{GrRe10} carry out a detailed study of the structure of simple wild parameters. The information we are going to need is the following

\begin{itemize}
\item $D_2=\{1\}$,
\item $D_1$ lies in a unique maximal torus $\hat T$ of $\hat G$, in particular
\item $D$ lies in $N(\hat T,\hat G)$,
\item the image of $D_0/D_1$ in $\Omega(\hat T,\hat G)$ is generated by a Coxeter element.
\end{itemize}

Note that a parameter satisfying this list of properties is a simple wild parameter. This is because $\mf{\hat g}^{D_1}=\mf{\hat t}$ and hence $\mf{\hat g}^{D_0}$ is the set of fixed points of a Coxeter element acting on $\mf{\hat t} = X_*(\hat T)\otimes \C$, which is trivial; moreover, the Swan conductor of $\phi$ equals
\[ \dim(\mf{\hat g}/\mf{\hat t})\frac{1}{h} = \frac{|R|}{h} = \rk(G), \]
where $R$ is the root system of $\hat t$ acting on $\mf{\hat g}$.

The assertion of \cite[Prop. 5.6]{GrRe10} is that conversely, if $p$ does not divide the order of the Weyl group, all simple wild parameters satisfy this list of properties.

\section{Embeddings of type (C)} \label{sec:embc}

\subsection{Definition and basic properties}

Let $S$ be a torus, $G$ a reductive group, and $j : S \rw G$ an embedding such that $j(S)$ is a maximal torus of $G$. Assume that all these are defined over $F$. If $j' : S \rw G$ is a second such embedding, we call $j$ and $j'$ stably conjugate if there exists $g \in G(\ol{F})$ such that $j'=\tx{Ad}(g)j$. The map $j$ provides an embedding $Z(G) \rw S$ defined over $F$. This embedding is unchanged if we replace $j$ by $j'$. We will call its image in $S$ again $Z(G)$ and we will write $S_\tx{ad}$ for $S/Z(G)$.
The map $j$ also provides an embedding $\Omega(j(S),G) \rw \tx{Aut}(S)$ defined over $F$. We will call its image $\Omega(S,G)$. When we replace $j$ by $j'$, the subgroup $\Omega(S,G)$ of $\tx{Aut}(S)$ remains unchanged. Note however that if the images of $j$ and $j'$ are the same, the two identifications of $\Omega(j(S),G)=\Omega(j'(S),G)$ with $\Omega(S,G)$ provided by $j$ and $j'$ will in general not be the same. They will differ by conjugation by an element of $\Omega(S,G)(F)$.

Let $E$ be the splitting field of $S$, a finite Galois extension of $F$. We will call a map $j : S \rw G$ an \emph{embedding of type (C)} if the following conditions hold:
\begin{itemize}
\item $S$ is tamely ramified, i.e. $E/F$ is a tamely ramified extension,
\item $G$ is split, semi-simple, and simply-connected,
\item $j$ is an embedding defined over $F$,
\item $j(S)$ is a maximal torus of $G$,
\item the image of $I(E/F)$ in $\tx{Aut}(X^*(S))$ is a cyclic subgroup of $\Omega(S,G)$ generated by a Coxeter element.
\end{itemize}
Following \cite{Pr01}, we can associate to $j$ a point in the Bruhat-Tits building $\mc{B}(G,F)$, namely
\[ \mc{A}(j(S),E)^{\Gamma(E/F)} \subset \mc{B}(G,E)^{\Gamma(E/F)}=\mc{B}(G,F), \]
where the left hand side is a singleton set because $S$ is anisotropic.

Embeddings of type (C) behave well with respect to unramified extensions in the following sense:
\begin{fct} \label{fct:typecbc} Let $j : S \rw G$ be an embedding of type (C) defined over $F$, and let $\tilde F/F$ be an unramified extension. Then $j \times \tilde F : S \times \tilde F \rw G \times \tilde F$ is an embedding of type (C) defined over $\tilde F$. The splitting field of $S \times \tilde F$ is $E\tilde F$. The restriction map
$I(E\tilde F/\tilde F) \rw I(E/F)$ is an isomorphism and respects the inclusions of both groups into $\tx{Aut}(X^*(S))$. Finally, the vertices associated to $j$ and $j \times \tilde F$ coincide under the natural inclusion $\mc{B}(G,F) \rw \mc{B}(G,\tilde F)$.
\end{fct}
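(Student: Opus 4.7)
The plan is to prove the assertions in the order (3), (2), (1), (4), since the Galois-theoretic identification in (3) drives the rest. Since $\tilde F/F$ is unramified, restriction from $E\tilde F$ to $E$ induces an injection $\Gamma(E\tilde F/\tilde F) \hookrightarrow \Gamma(E/F)$ with image $\Gamma(E/E\cap\tilde F)$. Because $\tilde F \subset F^u$, we have $E \cap \tilde F \subset E^u$, where $E^u$ is the maximal unramified subextension of $E/F$; and one checks that $E \cap E^u\tilde F = E^u$. Consequently, the inertia subgroup $I(E\tilde F/\tilde F) = \Gamma(E\tilde F/E^u\tilde F)$ maps isomorphically under restriction onto $I(E/F) = \Gamma(E/E^u)$. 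The character lattice $X^*(S \times \tilde F)$ is canonically $X^*(S)$, and the Galois action factors through restriction to $E$, so the two inclusions of inertia groups into $\tx{Aut}(X^*(S))$ agree, proving (3).

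Part (2) follows immediately: $S \times \tilde F$ splits over $E\tilde F$, so its splitting field $E'$ is contained in $E\tilde F$; but the composition $\Gamma(E\tilde F/\tilde F) \hookrightarrow \Gamma(E/F) \hookrightarrow \tx{Aut}(X^*(S))$ is injective, so $\Gamma(E\tilde F/\tilde F)$ acts faithfully on $X^*(S)$ and $E' = E\tilde F$. Part (1) then reduces to checking the five defining conditions of an embedding of type (C): tameness of $E\tilde F/\tilde F$ is inherited from $E/F$; $G \times \tilde F$ remains split, semi-simple and simply-connected; $j \times \tilde F$ is clearly defined over $\tilde F$ with maximal-torus image; and by (3) the image of $I(E\tilde F/\tilde F)$ in $\tx{Aut}(X^*(S))$ coincides with that of $I(E/F)$, which by hypothesis is a cyclic subgroup of $\Omega(S,G)$ generated by a Coxeter element.

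For (4), the point associated to $j$ is the unique element of $\mc{A}(j(S),E)^{\Gamma(E/F)}$, viewed inside $\mc{B}(G,F) = \mc{B}(G,E)^{\Gamma(E/F)}$ by Galois descent for buildings. The analogous point for $j \times \tilde F$ is the unique element of $\mc{A}(j(S) \times \tilde F, E\tilde F)^{\Gamma(E\tilde F/\tilde F)} \subset \mc{B}(G,\tilde F) = \mc{B}(G, E\tilde F)^{\Gamma(E\tilde F/\tilde F)}$. Under the compatible inclusions $\mc{B}(G,F) \hookrightarrow \mc{B}(G,\tilde F) \hookrightarrow \mc{B}(G,E\tilde F)$, the apartments for the torus $j(S)$ match up, so the point coming from $j$ is $\Gamma(E/F)$-fixed inside $\mc{A}(j(S) \times \tilde F, E\tilde F)$ and therefore a fortiori fixed by the subgroup $\Gamma(E\tilde F/\tilde F)$; uniqueness then forces it to coincide with the point associated to $j \times \tilde F$. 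The main technical point in the whole argument is the bookkeeping at the apartment level in (4); once one invokes the standard compatibility of Bruhat-Tits apartments under unramified base change, everything else is formal.
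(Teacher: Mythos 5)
Your argument is correct: the identification $I(E\tilde F/\tilde F)\cong I(E/F)$ via restriction (using $E\cap E^u\tilde F=E^u$), the resulting faithfulness giving $E\tilde F$ as splitting field, the verification of the type (C) conditions, and the apartment-level compatibility forcing the two associated points to coincide are all sound, modulo the minor point that $\Gamma(E\tilde F/\tilde F)$ acts on the embedded copy of $\mc{A}(j(S),E)$ through its image in $\Gamma(E/F)$ rather than literally as a subgroup. The paper itself dismisses this Fact with the single word ``Clear,'' so your write-up simply supplies the routine details the author had in mind; there is no divergence in approach to report.
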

\begin{proof} Clear. \end{proof}

\begin{lem} \label{lem:trivcoh} Let $S \rw G$ be an embedding of type (C). Then the map $H^1(F,S) \rw H^1(F,S_\tx{ad})$ is trivial. \end{lem}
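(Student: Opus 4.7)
My plan is to use Tate-Nakayama duality to reinterpret the map as one between Tate cohomology groups of cocharacter lattices, and then reduce the whole question to a single lattice identity for the Coxeter element.

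Set $\Gamma = \Gamma(E/F)$ and let $W'$ denote the image of $\Gamma$ in $\Omega(S,G)$; by hypothesis $W' \supset \la c \ra$ for some Coxeter element $c$. The embedding $j$ identifies $X_*(S)$ with the coroot lattice $Q^\vee$ of $G$ and $X_*(S_\tx{ad})$ with the coweight lattice $P^\vee$, both carrying the Galois action through $W'$; note that $S_\tx{ad}$ is also split by $E$. Tate-Nakayama then provides a commutative square identifying the map $H^1(F,S) \rw H^1(F,S_\tx{ad})$ with the map
\[ \hat H^{-1}(\Gamma, Q^\vee) \rw \hat H^{-1}(\Gamma, P^\vee) \]
induced by the inclusion $Q^\vee \hrw P^\vee$, so it suffices to show that this map vanishes.

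For the Tate cohomology computation I would use $\hat H^{-1}(\Gamma, M) = \ker(N_\Gamma)/I_\Gamma M$, with $N_\Gamma = \sum_{\gamma \in \Gamma}\gamma$ and $I_\Gamma M = \sum_\gamma (\gamma - 1)M$. Since the Coxeter element has no non-zero fixed vector on $P^\vee \otimes \Q$, neither does any overgroup $W' \supset \la c \ra$, so $N_\Gamma$ acts as zero on $Q^\vee$ and $P^\vee$ (both torsion-free). Hence $\hat H^{-1}(\Gamma, Q^\vee) = Q^\vee/I_\Gamma Q^\vee$ and similarly for $P^\vee$, and vanishing of the induced map is equivalent to the containment $Q^\vee \subset I_\Gamma P^\vee$.

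The crux is the lattice identity $(1-c)P^\vee = Q^\vee$. Fixing a simple-reflection factorization $c = s_1 s_2 \cdots s_r$, the telescoping $1 - c = \sum_{j=1}^r s_1 \cdots s_{j-1}(1-s_j)$ combined with $(1 - s_j)v = \alpha_j(v)\alpha_j^\vee$ immediately gives $(1-c)P^\vee \subset Q^\vee$. For the reverse inclusion I would compute on fundamental coweights: $(1-c)\omega_j^\vee = s_1 s_2 \cdots s_{j-1}\alpha_j^\vee$, and an induction shows that each $s_k$ with $k<j$ preserves the $\alpha_j^\vee$-coefficient of any $\Z$-linear combination of the $\alpha_i^\vee$'s while only modifying the $\alpha_k^\vee$-coefficient by an integer. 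The transition matrix from $\{(1-c)\omega_j^\vee\}_j$ to $\{\alpha_j^\vee\}_j$ is therefore unipotent upper-triangular, making $\{(1-c)\omega_j^\vee\}_j$ a $\Z$-basis of $Q^\vee$. Since $c \in W'$ we have $(1-c)P^\vee \subset I_\Gamma P^\vee$, and the lattice identity then gives $Q^\vee \subset I_\Gamma P^\vee$, completing the proof. The main substantive step, and the only place where the Coxeter hypothesis enters essentially, is the lattice identity; the rest is Tate-Nakayama formalism combined with the anisotropy built into the hypothesis.
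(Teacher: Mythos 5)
Your proof is correct, and at its core it rests on the same key fact as the paper's: the Coxeter-element lattice identity (the paper invokes it in the form that every $q \in Q$ is of the form $p - cp$ with $p \in P$, you prove its dual $(1-c)P^\vee = Q^\vee$). The packaging differs in two ways worth noting. The paper passes through Tate--Nakayama on the character side and first reduces to the cyclic subgroup $B = \langle c \rangle$ via inflation--restriction (using $P^B = 0$, and implicitly the normality of $B$, which holds since $B$ is the image of inertia), and then applies the lattice identity to kill $H^1(B,Q) \rw H^1(B,P)$. You instead work with $\hat H^{-1}(\Gamma, X_*)$ directly: since the image of $\Gamma$ contains $c$ and hence has no nonzero fixed vectors, the norm vanishes on the torsion-free lattices, $\hat H^{-1}$ becomes coinvariants modulo the augmentation submodule, and the containment $Q^\vee = (1-c)P^\vee \subset I_\Gamma P^\vee$ finishes the argument with no reduction to the cyclic subgroup at all. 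This is a mild but genuine simplification of the homological bookkeeping, and you also supply a complete elementary proof (telescoping plus unitriangularity on fundamental coweights) of the lattice identity that the paper cites as known. One small imprecision: the image of the full group $\Gamma(E/F)$ in $\mathrm{Aut}(X^*(S))$ need not lie in $\Omega(S,G)$ (only the inertia image is required to), but this is harmless, since your argument only uses that $\Gamma$ preserves $Q^\vee \subset P^\vee$, that its image contains $c$, and that there are no nonzero fixed vectors; the paper's formulation with a subgroup of the automorphism group of the root system is the careful version.
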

\begin{proof}
By Tate-Nakayama duality this is equivalent to the statement that the map $H^1(A,Q) \rw H^1(A,P)$ is trivial, where $Q \subset P$ are the root and weight lattice of the root system of $G$, and $A \subset A(R)$ is a subgroup containing a Coxeter element $c$. Let $B \subset A$ be the cyclic subgroup generated by $c$. Since $P^B=0$, the restriction maps provide isomorphisms
\[ H^1(A,Q) \rw H^1(B,Q)^{B/A} \qquad H^1(A,P) \rw H^1(B,P)^{B/A} \]
Hence it is enough to prove that the map $H^1(B,Q) \rw H^1(B,P)$ is trivial. This follows from the fact that for any $q \in Q$ there exists $p \in P$ with $q=p-cp$.
\end{proof}

\begin{cor} \label{cor:adstab} Let $j : S \rw G$ be an embedding of type (C). Then $G_\tx{ad}(F)$ acts transitively on the stable class of $j$. \end{cor}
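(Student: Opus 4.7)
My plan is to translate the transitivity statement into a vanishing statement in Galois cohomology, where Lemma \ref{lem:trivcoh} provides exactly the needed input.

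First, I would set up the cocycle attached to a second embedding $j'=\Ad(g)\circ j$ in the stable class of $j$, where $g\in G(\ol F)$. The requirement that $j'$ be defined over $F$ forces $g^{-1}\sigma(g)$ to centralize $j(S)$ for every $\sigma\in\Gamma_F$, hence to lie in $j(S)(\ol F)$ since $j(S)$ is a maximal torus. Pulling back through $j$ produces a $1$-cocycle $c_\sigma\in Z^1(\Gamma_F,S)$, whose class $[c]\in H^1(F,S)$ depends only on $j'$ (and not on the specific $g$ within the coset $g\,j(S)(\ol F)$). Classically, $[c]=0$ is the obstruction to $j$ and $j'$ being $G(F)$-conjugate; the corresponding image $[\bar c]\in H^1(F,S_\tx{ad})$ should detect $G_\tx{ad}(F)$-conjugacy.

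To make that precise, I would next observe that for any $s\in S(\ol F)$ the element $j(s)$ commutes with all of $j(S)$, so $\Ad(g\,j(s)^{-1})\circ j=j'$ as well. Setting $h:=g\,j(s)^{-1}$, a direct computation gives
\[ \sigma(h)h^{-1}=g\,j\bigl(c_\sigma\,\sigma(s)^{-1}s\bigr)g^{-1}, \]
and this lies in $Z(\ol F)$ — equivalently, the image $\ol h$ of $h$ in $G_\tx{ad}(\ol F)$ is Galois-fixed — precisely when $c_\sigma\,\sigma(s)^{-1}s\in Z(\ol F)$ for every $\sigma$, i.e.\ when the image of $s$ in $S_\tx{ad}(\ol F)$ trivialises the image $[\bar c]$ of $[c]$ in $H^1(F,S_\tx{ad})$.

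Finally, Lemma \ref{lem:trivcoh} says exactly that $H^1(F,S)\rw H^1(F,S_\tx{ad})$ is the zero map, so for every $[c]$ arising above such an $s$ exists; the resulting $\ol h\in G_\tx{ad}(F)$ satisfies $\Ad(\ol h)\circ j=j'$, proving transitivity. I do not foresee a real obstacle here: once the Galois-cohomological reformulation is in place the conclusion is immediate from Lemma \ref{lem:trivcoh}, and the only care needed is in the cocycle bookkeeping and in noting that adjusting $g$ on the right by an element of $j(S)(\ol F)$ does not disturb the identity $\Ad(\,\cdot\,)\circ j=j'$, because $j(S)$ is abelian and $Z\subset S$ is central in $G$.
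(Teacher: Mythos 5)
Your proposal is correct and follows essentially the same route as the paper: both form the cocycle $\sigma\mapsto g^{-1}\sigma(g)$ valued in $j(S)(\ol F)$, invoke Lemma \ref{lem:trivcoh} to kill its image in $H^1(F,S_\tx{ad})$, and then correct $g$ by (a lift of) the trivializing element of the adjoint torus to land in $G_\tx{ad}(F)$ without disturbing $\tx{Ad}(\cdot)\circ j=j'$. The only cosmetic difference is that the paper picks $t\in j(S_\tx{ad})(\ol F)$ and works directly with $gt^{-1}\in G_\tx{ad}(F)$, whereas you lift to $S(\ol F)$ and then descend the product to $G_\tx{ad}$.
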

\begin{proof}
Let $j'$ be stably conjugate to $j$ and $g \in G(\ol{F})$ be such that $j'=\tx{Ad}(g)j$. Then $g^{-1}\sigma(g) \in H^1(F,j(S))$. Applying lemma \ref{lem:trivcoh} we can find $t \in j(S_\tx{ad})(\ol{F})$ such that $t^{-1}\sigma(t)=g^{-1}\sigma(g)$. Then $gt^{-1} \in G_\tx{ad}(F)$ and $j'=\tx{Ad}(gt^{-1})j$.
\end{proof}

\subsection{Properties of the source of an embedding of type (C)}

Let again $S$ be an $F$-torus and let $E$ be its splitting field, which we assume to be tamely ramified. In \cite[\S3]{ChYu01} the authors introduce the so called \emph{finite-type Neron model} of $S$. It is a smooth $O_F$-group scheme of finite type with generic fiber equal to $S$, whose group of $O_F$-points equals the bounded subgroup of $S(F)$.

Since the generic fiber of the Neron model of $S$ coincides with $S$, it will cause no confusion to use the letter $S$ both for the torus and its Neron model. Let $S^0$ be the connected component of the Neron model. Then $S^0(O_F)$ is the Iwahori subgroup of $S(F)$.

We call $S$ inertially anisotropic if there are no $I(E/F)$-fixed vectors in $X^*(S)$. This is the case in particular if $S$ is the source of an embedding of type (C). By \cite[10.2.1]{BLR90}, such an $S$ has a Neron model in the classical sense. In fact, all the following integral models coincide for such a torus: The classical Neron model, the lft-Neron model, the ft-Neron model, the standard model.

The topological group $S(F)$ comes equipped with the Moy-Prasad filtration (see \cite[3.2]{MoPr96} or \cite[4.2]{Yu03}), which is defined as follows:
\[ S(F)_r = \{ s \in S^0(O_F)|\forall \chi \in X^*(S): v(\chi(s)-1) \geq r \} \]

Notice that, since the compact group $S(O_F)$ is mapped by any $\chi \in X^*(S)$ into $O_E^\times$, the condition $v(\chi(s)-1) \geq 0$ is vacuous and we have
\[S(F)_0=S^0(O_F).\]
On the other hand, if we assume that $E/F$ is tamely ramified, the argument of \cite[4.7.2]{Yu03} shows that any $s \in S(F)$ satisfying $v(\chi(s)-1)>0$ for all $\chi \in X^*(S)$ actually lies in $S^0(O_F)$. This shows that for $r>0$
\[ S(F)_r = \{ s \in S(F) | \forall \chi \in X^*(S) : v(\chi(s)-1) \geq r \}. \]
One also has a filtration on the Lie algebra $\mf{s}$ of $S$ defined for all $r \in \R$ by
\[ \mf{s}(F)_r = \{ X \in \mf{s}(F) | \forall \chi \in X^*(S) : v(d\chi(s)) \geq r \}. \]
Here the Neron model does not play a role.

\begin{pro} \label{pro:sfilt} Let $S' \rw G$ be an embedding of type (C) and denote the Coxeter number of $G$ by $h$. Let $S$ be a quotient of $S'$ by any subgroup of $Z(G)$. Then
\begin{itemize}
\item $S(F)_0=S(F)_\frac{1}{h}$
\item For $0<i<h$, the quotients
      \[ S(F)_\frac{i}{h}/S(F)_\frac{i+1}{h}\qquad\tx{and}\qquad \mf{s}(F)_\frac{i}{h}/\mf{s}(F)_\frac{i+1}{h} \]
      are canonically isomorphic $k_F$-vector spaces. Their dimension is equal to the multiplicity with which $i$ appears as an exponent of the root system of $G$.
\item For $i=1$ and $i=h-1$, the above $k_F$-vector spaces are one-dimensional.
\end{itemize}
\end{pro}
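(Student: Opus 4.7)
The plan is to compute both Moy-Prasad quotients as $\Gamma(E/F)$-invariants in the split-torus filtration of $T(E)$, where $T = S \times_F E$, and then read off the dimension from the eigenvalues of the Coxeter element. Since $Z \subset Z(G)$, the lattice $X^*(S) \subset X^*(S')$ has finite index with the same $I(E/F)$-action, so $S$ has splitting field $E$ and the same Coxeter eigenvalue structure on $X_*(T) \otimes \Q$ as $S'$.

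For $k \geq 1$ the groups $T(E)_{k/h}$ and $\mf{t}(E)_{k/h}$ are pro-$p$, while $|I(E/F)| = h$ is prime to $p$ by tameness. The standard averaging argument then gives $H^i(I(E/F), M) = 0$ for $i \geq 1$ and any pro-$p$ $I(E/F)$-module $M$, so taking $I(E/F)$-invariants in
\[ 1 \rw T(E)_{(k+1)/h} \rw T(E)_{k/h} \rw T(E)_{k/h}/T(E)_{(k+1)/h} \rw 1 \]
and its Lie algebra analog yields
\[ S(\tilde F)_{k/h}/S(\tilde F)_{(k+1)/h} \cong (T(E)_{k/h}/T(E)_{(k+1)/h})^{I(E/F)}, \]
and similarly for $\mf{s}$, where $\tilde F = E^{I(E/F)}$ is the maximal unramified subextension of $E/F$. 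The identification $1 + \varpi_E^k u \dw \varpi_E^k u \dw u \bmod \varpi_E$ gives a canonical $\Gamma(E/F)$-equivariant isomorphism
\[ T(E)_{k/h}/T(E)_{(k+1)/h} \cong \mf{t}(E)_{k/h}/\mf{t}(E)_{(k+1)/h} \cong X_*(T) \otimes_{\Z} k_E, \]
on which a generator $\gamma$ of $I(E/F)$ acts by $\lambda \otimes u \mapsto c\lambda \otimes \zeta^k u$ with $\zeta = \gamma(\varpi_E)/\varpi_E \in \mu_h(F)$ (Fact 2.1). To descend from $\tilde F$ to $F$, the $I(E/F)$-invariants form a $k_E$-vector space with $k_E/k_F$-semi-linear $\Gamma(\tilde F/F)$-action; Galois descent is cohomology-free here, so $\dim_{k_F}$ of the $\Gamma(E/F)$-invariants equals $\dim_{k_E}$ of the $I(E/F)$-invariants.

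The latter is the $\zeta^{-k}$-eigenspace of $c$ on $X_*(T) \otimes k_E$; by Coxeter's classical theorem the eigenvalues of $c$ are $\zeta^{m_j}$ for the exponents $m_j$ of the root system, so its $k_E$-dimension equals the multiplicity of $h-k$ as an exponent. By the exponent symmetry $m \leftrightarrow h-m$ in an irreducible root system, this equals the multiplicity of $k$, and the third item follows since $1$ and $h-1$ always appear as exponents with multiplicity one. For the first item $S(F)_0 = S(F)_{1/h}$, I would use that the reductive quotient of the special fiber of the connected Neron model $\mc{S}^0_{k_F}$ has character lattice $X^*(S)^{I(E/F)} = 0$ (since the Coxeter element has no nonzero fixed vectors), hence is trivial, so $\mc{S}^0_{k_F}$ is unipotent and $S(F)_0 = \mc{S}^0(O_F)$ is pro-$p$. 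Since $T(O_E)/T(E)_{1/h} = T(k_E)$ has order prime to $p$, the inclusion $S(F)_0 \hrw T(O_E)$ lands in the pro-$p$ radical $T(E)_{1/h}$, giving $S(F)_0 \subset S(F)_{1/h}$; the reverse inclusion is immediate.

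The main obstacle will be navigating the possible $p$-divisibility of $f(E/F) = [\Gamma(E/F) : I(E/F)]$: while $I(E/F)$-cohomology vanishes freely on pro-$p$ modules, residual Frobenius cohomology need not, which is why the argument first descends along inertia to $\tilde F$ and only then applies Galois descent at the level of the resulting reductive $k_E$-vector spaces. A second technical input is the structural result on tame Neron models (the identification of the reductive-quotient character lattice with $X^*(S)^{I(E/F)}$) used in the $k = 0$ case.
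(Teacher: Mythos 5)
Your route is essentially the paper's (twisted $\Gamma(E/F)$-action on $X_*(S)\otimes k_E$, prime-to-$p$ inertia to kill cohomology, Coxeter eigenvalue count, semilinear descent through the residue field), but there is one genuine gap, and it sits exactly at the point you tried to wave away. After descending along inertia you have $S(\tilde F)_{k/h}/S(\tilde F)_{(k+1)/h}\cong V:=(X_*(S)\otimes k_E)^{I(E/F)}$ (twisted action), and semilinear Galois descent does give $\dim_{k_F}V^{\Gamma(\tilde F/F)}=\dim_{k_E}V$. But the proposition concerns $S(F)_{k/h}/S(F)_{(k+1)/h}$, and to identify this with $V^{\Gamma(\tilde F/F)}$ you must show that the natural injection
\[ S(F)_{k/h}/S(F)_{(k+1)/h}\ \hookrightarrow\ \bigl(S(\tilde F)_{k/h}/S(\tilde F)_{(k+1)/h}\bigr)^{\Gamma(\tilde F/F)} \]
is surjective, i.e. that $H^1\bigl(\Gamma(\tilde F/F),S(\tilde F)_{(k+1)/h}\bigr)$ vanishes. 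This is precisely the "residual Frobenius cohomology" you flagged: the coefficient group is pro-$p$ while $[\tilde F:F]$ may be divisible by $p$, so no averaging applies, and "Galois descent is cohomology-free" is not a justification — descent of $k_E$-vector spaces with semilinear action computes the dimension of $V^{\Gamma(\tilde F/F)}$ but does not identify that fixed space with the Moy--Prasad quotient over $F$. The vanishing is true, but it needs an argument: filter $S(\tilde F)_{(k+1)/h}$ by deeper Moy--Prasad subgroups, observe that each graded piece is a $k_E$-vector space with semilinear Frobenius action (so its $H^1$ vanishes by additive Hilbert 90, equivalently Lang's theorem), and pass to the limit. This is exactly the DeBacker--Reeder argument ([DeRe09, 2.3.1]) that the paper invokes when it kills $H^1(\Gamma(E/F),S(E)_r)$ via inflation--restriction; the same point recurs on your Lie-algebra side (now for an $O_{\tilde F}$-lattice with semilinear Frobenius), where the paper instead sidesteps it by combining injectivity with the equality of orders of the group and Lie-algebra quotients from [Yu03, 5.6].

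The remaining steps are fine and differ from the paper only cosmetically: your exponent-symmetry argument $m\leftrightarrow h-m$ replaces the paper's choice of embedding sending $\zeta^{-1}$ to $e^{2\pi i/h}$ (both legitimate, since $c$ has order $h$ prime to $p$ and is therefore semisimple on $X_*(S)\otimes k_E$, so eigenspace dimensions are read off the reduced characteristic polynomial, as in the paper's Hensel step); and your first item rests on the unipotence of the special fibre of the connected N\'eron model, which is the same citation-level input the paper takes from [NaXa91] and [KuSa01] (special fibre a product of Witt groups), followed by a cleaner pro-$p$ argument than the paper's $p$-power trick. So: same skeleton, one missing cohomological input that must be supplied, not asserted.
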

\begin{proof}
Recall that $E$ denotes the splitting field of $S$ and let $\omega$ be a uniformizer of $E$.

We begin with the first statement. Let $s \in S^0(O_F)$ and $\chi \in X^*(S)$. A-priori we know $\chi(s) \in O_E^\times$. Using \cite[Theorem 1.3]{NaXa91} and \cite[Theorem 2.2]{KuSa01} we see that the special fiber of $S^0$ is a product of Witt groups. Thus there exists a power $p^n$ such that $s^{p^n}$ has trivial image in $S^0(k_F)$. It follows from \cite[4.7.3]{Yu03} that $s^{p^n} \in S(F)_1$, thus $\chi(s)^{p^n} \in 1+\omega O_E$. In other words, the image of $\chi(s)^{p^n}$ in $k_E^\times$ is trivial, but then already the image of $\chi(s)$ in $k_E^\times$ must be trivial. This is equivalent to
$v(\chi(s)-1) \geq v(\omega)=e(E/F)^{-1}=h^{-1}$.

Turning to the second statement, fix $0<i<h$ and consider the sequences
\[ 1 \rw 1+\omega^{i+1} O_E \rw 1+\omega^i O_E \rw k_E \rw 0, \]
and
\[ 0 \rw \omega^{i+1} O_E \rw \omega^i O_E \rw k_E \rw 0. \]
The map $1+\omega^i O_E \rw k_E$ is given by $1+\omega^ix \mapsto [x]$, where $[\ ]: O_E \rw k_E$ is the reduction map. Similarly, the map $\omega^iO_E \rw k_E$ is given by $\omega^ix \mapsto [x]$.
Let
\[ f : \Gamma(E/F) \rw k_E^\times, \qquad \gamma \mapsto \left[\frac{\gamma(\omega)}{\omega}\right]^i \]
Then $f \in Z^1(\Gamma(E/F),k_E^\times)$ and after twisting the usual action of $\Gamma(E/F)$ on $k_E$ via $f$ the above exact sequences become sequences of $\Gamma(E/F)$-modules.
Tensoring with the $\Gamma(E/F)$-module $X_*(S)$ and taking $\Gamma(E/F)$-fixed points we obtain the two sequences
\[ 1 \rw S(F)_\frac{i+1}{h} \rw S(F)_\frac{i}{h} \rw H^0(\Gamma(E/F),X_*(S) \otimes k_E ) \rw H^1(\Gamma(E/F),S(E)_\frac{i+1}{h}),  \]
\[ 0 \rw \mf{s}(F)_\frac{i+1}{h} \rw \mf{s}(F)_\frac{i}{h} \rw H^0(\Gamma(E/F),X_*(S) \otimes k_E ) \rw H^1(\Gamma(E/F),\mf{s}(E)_\frac{i+1}{h}).  \]
where we have used that $H^0(\Gamma(E/F),S(E)_r)=S(F)_r$ for $r>0$, according to \cite[4.7.2]{Yu03}.
We will show that the maps
\begin{equation} \label{eq:filtiso} S(F)_\frac{i}{h} / S(F)_\frac{i+1}{h} \rw H^0(\Gamma(E/F),X_*(S) \otimes k_E ) \lw \mf{s}(F)_\frac{i}{h} / \mf{s}(F)_\frac{i+1}{h} \end{equation}
are isomorphisms of $k_F$-vector spaces and that the dimension of the middle space equals the multiplicity of $i$ as an exponent of the root system of $G$. Notice that although $i$ is not present in the notation for the middle space, it influences it because we have taken a twisted action of $\Gamma(E/F)$ on $k_E$, and the twist depends on $i$.

By construction it is clear that the maps in \eqref{eq:filtiso} are $k_F$-linear and injective. The surjectivity of the first map will follow if we show that $H^1(\Gamma(E/F),S(E)_r)$ is trivial. To that end, consider the inflation-restriction sequence
\[ H^1(\Gamma(E/F)/I(E/F),S(F')_r) \rw H^1(\Gamma(E/F),S(E)_r) \rw H^1(I(E/F),S(E)_r), \]
where $F':=E^{I(E/F)}$. The last group in that sequence is trivial, since $I(E/F)$ is a finite group whose order is prime to $p$, while $S(E)_r$ is an abelian pro-p group. The argument of \cite[2.3.1]{DeRe09} shows that the first group is also trivial. We have thus shown that the first map in \eqref{eq:filtiso} is an isomorphism.

Next we consider the group $H^0(\Gamma(E/F),X_*(S) \otimes k_E)$. Let $m$ be the multiplicity of $i$ as an exponent of $R$. We will first show that $H^0(I(E/F),X_*(S) \otimes k_E)$ is a $k_E$-vector space of dimension $m$. Fix a generator $\sigma \in I(E/F)$. Then $\sigma(\omega)=\zeta\omega$, where $\zeta \in O_E^\times$ is an element whose $E/F'$-norm is $1$. In fact,
we may assume that $\omega$ is chosen so that $\zeta \in O_{F'}^\times$ is a primitve $h$-th root of unity, and then so is $[\zeta] \in k_E^\times$. Furthermore, by assumption, $\sigma$ acts on $X^*(S)$ via a Coxeter element $c$. With this notation, $H^0(I(E/F),X_*(S) \otimes k_E)$ is the $k_E$-subspace of $X_*(S) \otimes k_E$ fixed by the action of $c \otimes [\zeta]^i$, which is the same as the $[\zeta]^{-i}$-eigenspace for the action of $c$ on $X_*(S)\otimes k_E$. The dimension of this space is equal to the highest power of $(X-[\zeta]^{-i})$ dividing the image in $k_E[X]$ of the characteristic polynomial $P_c \in \Z[X]$ of the action of $c$ on $X_*(S)$. By Hensel's lemma this is the same as the highest power of $(X-\zeta^{-i})$ dividing the image of $P_c$ in $O_{F'}[X]$. This power remains the same when we consider divisibility in $\ol{F}[X]$ instead of $O_{F'}[X]$. We can choose a field homomorphism $\ol{F} \rw \C$ which sends $\zeta^{-1}$ to $\exp(\frac{2\pi i}{h})$ and consider divisibility in $\C[X]$. But the highest power of $(X-\exp(\frac{2\pi i}{h}))$ which divides $P_c$ in $\C[X]$ is by definition equal to $m$, see \cite[V.6]{Bou02}.

We have shown that $H^0(I(E/F),X_*(S) \otimes k_E)$ is a $k_E$-vector space of dimension $m$. It is equipped with an action of $\Gamma(E/F)/I(E/F)=\tx{Gal}(k_E/k_F)$ which is compatible with the natural action of this group on $k_E$. Thus the set of fixed points is a $k_F$-vector space of dimension $m$.

To complete the proof of the second statement of the proposition, we have to show that the second map in \eqref{eq:filtiso} is an isomorphism. Since we already know that it is injective, it will be enough to compare the dimensions of its source and target. These are both vector spaces over a finite field, so this is equivalent to comparing their orders as abstract groups. By what we have just proved, it is the same as comparing the orders of the first and third term in \eqref{eq:filtiso}. These are equal due to \cite[5.6]{Yu03}.

The third statement is a direct corollary of the second and \cite[VI.1.11.30]{Bou02}.

\end{proof}

\begin{pro} \label{pro:sprod} Let $S \rw G$ be an embedding of type (C). Then
\[ S(F) = S(F)_0 \times Z(G)(F). \]
\end{pro}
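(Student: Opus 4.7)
The approach is to identify the prime-to-$p$ torsion subgroup $T \subseteq S(F)$ with $Z(G)(F)$ and then show that $S(F)$ splits as $S(F)_0 \times T$. First, $S(F)_0$ is pro-$p$: by Proposition \ref{pro:sfilt} we have $S(F)_0 = S(F)_\frac{1}{h}$, and each graded piece $S(F)_\frac{i}{h}/S(F)_\frac{i+1}{h}$ for $i \geq 1$ is a $k_F$-vector space, so $S(F)_0$ is a projective limit of finite $p$-groups. On the other hand $|Z(G)|$ divides the connection index of the root system of $G$, whose prime divisors divide $h$ by the second fact of Section \ref{sec:notpre}, and are therefore coprime to $p$. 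Hence any finite-order element of $Z(G)(F)$ lying in $S(F)_0$ has $p$-power order and is trivial, giving $Z(G)(F) \cap S(F)_0 = \{1\}$ and the injectivity of $S(F)_0 \times Z(G)(F) \to S(F)$.

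To compute $T$, fix an $\ol{F}$-isomorphism of $S$ with a maximal torus of $G_{\ol{F}}$, so that $X^*(S) = P$ with the Galois action factoring through the Weyl group and with a generator of $I(E/F)$ acting as a Coxeter element $c$. Then
\[ T = \tx{Hom}_\Gamma(P, \mu_{p'}(\ol{F})), \]
where $\mu_{p'}(\ol{F})$ denotes roots of unity of order prime to $p$. Since $\mu_{p'}(\ol{F}) \subset F^u$ is fixed pointwise by $I_F$, $\Gamma$-equivariance forces $\phi((c-1)P) = 1$ for every $\phi \in T$. The key combinatorial input is the identity $(c-1)P = Q$: the inclusion $(c-1)P \subseteq Q$ holds because the Weyl group acts trivially on $P/Q$, and the reverse uses the classical formula $|\det(c-1)_{P\otimes\Q}| = [P:Q]$ for a Coxeter element (see \cite[V]{Bou02}), combined with invertibility of $c-1$ on $P \otimes \Q$. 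Hence $\phi$ factors through $P/Q$; since the Weyl group acts trivially on $P/Q$ so does $\Gamma$, and equivariance reduces to $\phi$ taking values in $\mu_{p'}(F)$. This gives $T = \tx{Hom}(P/Q, \mu_{p'}(F)) = Z(G)(F)$.

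Finally, every coset in $S(F)/S(F)_0$ is represented by an element of $T$: if $s \in S(F)$ has image of order $n$ (necessarily coprime to $p$), then $s^n \in S(F)_0$ has a unique $n$-th root $s_1$ in the pro-$p$ group $S(F)_0$, and $t := s s_1^{-1} \in T$ satisfies $[t]=[s]$. Combined with injectivity, this gives $S(F) = S(F)_0 \times Z(G)(F)$. The main obstacle is the combinatorial identity $(c-1)P = Q$ for a Coxeter element $c$, which I invoke from Bourbaki rather than reproving; beyond this, the argument is a straightforward splitting of prime-to-$p$ torsion from a pro-$p$ radical.
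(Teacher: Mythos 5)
Your route is genuinely different from the paper's. The paper shows that the injection $Z(G)(F) \rw S(F)/S(F)_0$ is surjective by an order count: it computes $|S(F)/S(F)_0|$ through the component group of the N\'eron model, Xarles' theorem \cite{Xa93}, the isomorphisms $H^1(C,P) \cong H^1(I,X^*(S))$ and $H^1(C,P)\cong H^1(C,P/Q)$, and a Frobenius (co)invariants computation, and then matches the result with $|Z(G)(F)|=\prod\gcd(q-1,n_i)$. You instead identify the prime-to-$p$ torsion subgroup $T \subset S(F)$ directly with $Z(G)(F)$ and split off the pro-$p$ group $S(F)_0$ by root extraction. That identification is correct: the key input $Q \subseteq (1-c)P$ is exactly the fact the paper uses in the proof of Lemma \ref{lem:trivcoh}, the reverse inclusion from $|\det(c-1)|=[P:Q]$ is fine, and $\tx{Hom}(P/Q,\mu_{p'}(F))$ is indeed $Z(G)(F)$ inside $S(F)$ because $Z(G)\subset S$ is cut out by $Q\subset X^*(S)$ and $\Gamma$ acts trivially on $P/Q$. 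This is more elementary than the paper's argument, avoiding the N\'eron component group machinery entirely.

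There is, however, one genuine gap, and it sits precisely at the surjectivity step: the parenthetical assertion that the order $n$ of the image of $s$ in $S(F)/S(F)_0$ is ``necessarily coprime to $p$'' is exactly what makes the root-extraction argument produce a representative in $T$, and you give no argument for it; if this quotient had $p$-torsion, the asserted direct product decomposition would be false, so the claim cannot be taken for granted. Fortunately it can be closed with tools already in the paper: by \cite[4.7.2]{Yu03} together with the first statement of Proposition \ref{pro:sfilt}, $S(F)_0$ is precisely the set of $s \in S(F)$ such that $v(\chi(s)-1)>0$ for all $\chi \in X^*(S)$, i.e.\ such that every $\chi(s)\in O_E^\times$ has trivial image in $k_E^\times$. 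Hence if $s \notin S(F)_0$, some $\chi(s)$ has non-trivial image in $k_E^\times$, and since $k_E^\times$ has order prime to $p$, the image of $\chi(s^{p^a})=\chi(s)^{p^a}$ is again non-trivial, so $s^{p^a}\notin S(F)_0$ for all $a\geq 0$; consequently no element of $S(F)/S(F)_0$ has order divisible by $p$. With this paragraph inserted, your proof is complete; what it gives up relative to the paper is only the explicit formula for $|Z(G)(F)|$ and $|S(F)/S(F)_0|$ that the cohomological count produces along the way.
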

\begin{proof}
By assumption $p$ does not divide the Coxeter number of $G$. In particular $p$ does not divide the order of the finite diagonalizable group $Z(G)$. Thus $Z(G)(F)$ is an abelian group of order prime to $p$, while $S(F)_0$ is a pro-p-group by Proposition \ref{pro:sfilt}. This shows
\[ S(F)_0 \cap Z(G)(F) = \{1\}. \]
In particular, the projection $S(F) \rw S(F)/S(F)_0$ restricts to an injection
\[ Z(G)(F) \rw S(F)/S(F)_0 \]
and we need to show that this injection is also surjective. Since both groups are finite, it is enough to compare orders.

We begin by computing the order of $Z(G)(F)$. We have $Z(G)(F)=Z(G)(O_F)$, with respect to the $O_F$-structure on $G$ provided by the vertex $o$. The $O_F$-group scheme $Z(G)$ is etale, thus $Z(G)(O_F)=Z(G)(k_F)$. We know that
\[ X^*(Z(G))=P/Q, \]
where $P$ and $Q$ are the weight and root lattices for $T$ in $G$. The quotient $P/Q$ is a finite abelian group and we have the corresponding decompositions
\[ P/Q = \prod \Z/n_i\Z\qquad\tx{and}\qquad Z(G) = \prod \mu_{n_i} \]
with all $n_i$ coprime to $p$. We conclude
\[ |Z(G)(F)| = \prod_i \gcd(q-1,n_i). \]

Next we turn to the order of $S(F)/S(F)_0$. We have the bijection
\[ S(F)/S(F)_0 = S(O_F)/S^0(O_F) \rw S(k_F)/S^0(k_F) \rw \pi_0(S)(k_F). \]
Let $\sigma$ be the Frobenius automorphism of $\ol{k_F}$. Since $\pi_0(S)$ is an etale group scheme over $k_F$, the target of the above isomorphism equals $\pi_0(S)(\ol{k_F})^\sigma$.
According to \cite{Xa93} we have
\[ \pi_0(S)(\ol{k_F})^\sigma = \tx{Hom}_\Z(H^1(I,X^*(S)),\Q/\Z)^\sigma \]
and this group has the same order as $H^1(I,X^*(S))_\sigma$. Choosing an admissible isomorphism $S \rw T$ defined over $\ol{F}$ we can identify $X^*(S)$ with $P$ and under this identification $\Gamma$ acts on $P$ through a finite subgroup of the Weyl group. The image of $I$ in this subgroup is a cyclic subgroup $C$ generated by a Coxeter element $c \in C$. The inflation map provides an isomorphism
\[ H^1(C,P) \rw H^1(I,X^*(S)). \]
We claim that the map
\[ H^1(C,P) \rw H^1(C,P/Q) \]
is also an isomorphism. The injectivity follows from the fact that $H^1(C,Q) \rw H^1(C,P)$ is trivial, a fact we already used in the proof of Lemma \ref{lem:trivcoh}. The surjectivity follows from the fact that by periodicity, $H^2(C,Q)=H^0(C,Q)$ and the latter is trivial due to the ellipticity of $c$.

Both isomorphisms are equivariant for the action of $\Gamma$. Now $\Gamma$ acts trivially on $P/Q$, while $\sigma$ acts on $C$ by $c\mapsto c^q$. It follows that the isomorphism of groups
\[ H^1(C,P/Q) = \tx{Hom}(C,P/Q) \rw P/Q,\qquad \xi \mapsto \xi(c) \]
transports the action of $\sigma$ on the left to multiplication by $q^{-1}$. What we are looking for is then the order of $H_0(q^{-1},P/Q)$. The group $P/Q$ being finite, this is the same as the order of
\[ H^0(q^{-1},P/Q) = H^0(q,P/Q) = \tx{Ker}(q-1|P/Q) \]
and it is readily checked that the order of the latter is given by the same formula as the formula for the order of $Z(G)(F)$ provided earlier.
\end{proof}

\subsection{A splitting}

\begin{pro} \label{pro:bary}
Let $j: S \rw G$ be an embedding of type (C). Then the associated point in $\mc{B}(G,F)$ is the barycenter of an alcove.
\end{pro}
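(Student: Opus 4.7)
The associated point $x$ is the unique fixed point of $\Gamma(E/F)$ acting on $\mc{A}(j(S), E)$, a singleton because $S$ is anisotropic. By Fact \ref{fct:typecbc} we may replace $F$ by the maximal unramified subextension of $E$; so assume $E/F$ is totally ramified of degree $h$, with $\Gamma(E/F) = I(E/F) = \la \sigma \ra$ cyclic of order $h$. By Fact 2.1 we may write $E = F(\omega)$ for a uniformizer $\omega$ with $\sigma(\omega) = \zeta\omega$ and $\zeta \in F$ a primitive $h$-th root of unity.

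Fix an $F$-split maximal torus $T \subset G$ and a hyperspecial vertex $o \in \mc{A}(T,F)$, identifying $\mc{A}(T,E) = X_*(T)_\R$ via $o$ as origin. Since $j(S)_E$ and $T_E$ are both split maximal tori of $G_E$, choose $g \in G(E)$ with $g T g^{-1} = j(S)$; then $g$ transports $\mc{A}(T,E)$ onto $\mc{A}(j(S),E)$. The $\sigma$-action on $\mc{A}(j(S),E)$, pulled back to $\mc{A}(T,E)$ via $g$, is the affine action by $n := g^{-1}\sigma(g) \in N(T,G)(E)$ (which lies in $N(T,G)(E)$ because $j(S)$ is $\sigma$-stable). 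Decomposing $n = t\dot c$, with $\dot c \in N(T,G)(O_F)$ a fixed lift of the Coxeter element $c \in W$ and $t \in T(E)$, the action reads $y \mapsto c(y) + v(t)$ on $X_*(T)_\R$, whose unique fixed point is $y^* := (1-c)^{-1}v(t)$, well-defined because $1-c$ is invertible from the ellipticity of $c$. Under $g$, $y^*$ corresponds to $x$.

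To identify $y^*$ as the barycenter of an $F$-alcove, the plan is to invoke Corollary \ref{cor:adstab}: the $G_\tx{ad}(F)$-orbit of $x$ depends only on the stable class of $j$. Since $G$ is split and simply-connected, the affine Weyl group $X_*(T) \rtimes W$ acts simply transitively on $F$-alcoves, hence $G_\tx{ad}(F)$ acts transitively on alcove barycenters. It thus suffices to exhibit, within each stable class of type (C) embeddings, one representative whose associated point is the standard barycenter $b := \rho^\vee/h$ of the fundamental $F$-alcove -- and $b$ is indeed a barycenter, since $\alpha_i(b) = 1/h$ for every simple root $\alpha_i$ and $(1-\tilde\alpha)(b) = 1/h$, using $\tilde\alpha(\rho^\vee) = h-1$. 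Such a representative is constructed by choosing $t' \in T(E)$ with $v(t') = (1-c)\rho^\vee/h$; this is possible because $(1-c)\rho^\vee \in Q^\vee = X_*(T)$ (the standard identity $(1-w)P^\vee \subset Q^\vee$ for all $w \in W$ applied to $\rho^\vee \in P^\vee$). The element $n' := t'\dot c$ satisfies the cocycle closing condition $\prod_{k=0}^{h-1} \sigma^k(n') = 1$: on valuations, the required identity $\sum_{k=0}^{h-1} c^k v(t') = 0$ holds automatically by ellipticity, while the integral part can be arranged by multiplying $t'$ by a suitable element of $T(O_E)$, using that $\dot c^h \in Z(G)(O_F)$ and that the relevant norm-type map on $T(O_E)$ is surjective onto $Z(G)(O_F)$. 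This produces the desired embedding of type (C) with associated point $b$.

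The main obstacle is verifying that this explicit representative indeed lies in any prescribed stable class, or equivalently that the coboundary freedom in choosing $g$ (replacing $g$ by $gs$ with $s \in N(T,G)(E)$), combined with the $G_\tx{ad}(F)$-action on the apartment, exhausts all possible cosets of $v(t) \in \frac{1}{h}X_*(T)$ up to the $X_*(T) \rtimes W$-orbit of $(1-c)\rho^\vee/h$. I expect this cohomological computation to rely on a careful analysis of the coboundary action on valuations combined with the triviality result of Lemma \ref{lem:trivcoh}.
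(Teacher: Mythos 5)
Your setup --- reducing to $E/F$ totally ramified via Fact~\ref{fct:typecbc}, decomposing the cocycle as $n = t\dot c$, computing the fixed point $(1-c)^{-1}v(t)$, and identifying the barycenter as $\rho^\vee/h$ --- is sound, but the argument has two unfilled gaps. First, the cocycle closing condition: you assert that the ``norm-type map on $T(O_E)$'' surjects onto $Z(G)(O_F)$ (so that the obstruction $N(t')\dot c^h$ can be killed by adjusting $t'$ within $T(O_E)$), but this requires a genuine argument; surjectivity of a twisted norm on the bounded subgroup of a ramified torus onto the center is not automatic. Second, and more seriously, you do not establish that the constructed embedding lies in the stable class of the given $j$ --- this is precisely the ``main obstacle'' you flag at the end. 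Without it, the reduction to exhibiting one good representative in \emph{each} stable class does not prove the statement for an arbitrary $j$. What you would need is the uniqueness of the stable class of admissible embeddings of $S$ into $G$; this is true (and the paper invokes it in Section~\ref{sec:pack}), but you neither cite nor prove it, and the ``cohomological computation'' you hope will close the gap is left entirely open.

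The paper's own proof circumvents both problems because it never constructs a second embedding. After conjugating $j$ under $G(F)$ so that $x \in \overline{C}$, and modifying the conjugator $p$ to $q = p\mu(\omega)$ (with $\mu \in P^\vee$ chosen so $\lambda = w\mu - \mu$) so that $q^{-1}\sigma(q)$ fixes the hyperspecial vertex $o$, it encodes the translation $o \mapsto x$ in $e^{-1}P^\vee/P^\vee$ and injects this group into $T_\tx{ad}(k_F)[e]$ via Galois cohomology. The image $\bar t$ is then shown to be \emph{regular}, because $\bar h\,\bar t\,\bar h^{-1}$ is the reduction of a lift of a Coxeter element in $N(T_\tx{ad},G_\tx{ad})(O_E)$, which is regular by Springer--Steinberg. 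Since there is a unique Weyl orbit of regular elements of order $h$ in $T_\tx{ad}(k_F)[e]$ (again Springer--Steinberg), the orbit of $\bar t$ must agree with that of the barycenter's reduction $\bar\xi$. This regularity-mod-$p$ step is the ingredient your plan is missing: it lets the paper compare the given cocycle of $j$ directly with the barycenter, bypassing any classification of cocycles or of stable classes.
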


\begin{proof}
Fix a splitting $(T,B,\{X_\alpha\})$ of $G$ over $\Z$, and let $C \subset \mc{A}(T,F)$ and $o \in \ol{C}$ be the corresponding alcove and special vertex. Let $x := \mc{A}(j(S),E)^{\Gamma(E/F)}$ be the point of $\mc{B}(G,F)$ associated to the $j$. We may conjugate $j$ under $G(F)$ to achieve $x \in \ol{C}$, which we will henceforth assume. We will show that $x$ is the barycenter of $C$.

Recall that $E$ denotes the splitting field of $S$. According to Fact \ref{fct:typecbc} we may, without loss of generality, base change to the maximal unramified subextension of $E/F$ and henceforth assume that $E/F$ is totally ramified. Choose a generator $\sigma \in \Gamma(E/F)$.
Let $p \in G(E)$ be such that $\tx{Ad}(p)T=S$. The image of $p^{-1}\sigma(p) \in N(T,G)(E)$ in the affine Weyl group $\Omega_a(T,G)$ can be written as $\lambda w$ with $\lambda \in Q^\vee$ and $w \in \Omega_a(T,G)_o$. By assumption, $w$ projects to a Coxeter element in $\Omega(T,G)$, thus there exists $\mu \in P^\vee$ with $\lambda=w\mu-\mu$. Put $q := p\mu(\omega) \in G_\tx{ad}(E)$. A direct calculation reveals that the image of $q^{-1}\sigma(q) \in N(T,G_\tx{ad})(E)$ in the affine Weyl group $\Omega_a(T_\tx{ad},G_\tx{ad})$ equals $w$. Being elliptic, the unique fixed point of $w$ in $\mc{A}(T,E)$ is $o$. Thus we have an element $q \in G_\tx{ad}(E)$ with $\tx{Ad}(q)T=S$ and $qo=x$.

Write $q=th$ with $t \in T_\tx{ad}(E)$ having the property $to=x$ and $h \in G_\tx{ad}(E)_o$. Consider the exact sequence
\[ 1 \rw O_E^\times \rw E^\times \stackrel{-v}{\lrw} e^{-1}\Z \rw 1, \]
where $e=[E:F]$. Tensoring with $P^\vee=X_*(T_\tx{ad})$ we obtain
\[ 1 \rw T_\tx{ad}(O_E) \rw T_\tx{ad}(E) \rw e^{-1}P^\vee \rw 1. \]
The image of $t$ in $e^{-1}P^\vee$ is the translation of $\mc{A}(T,E)$ sending $o$ to $x$. The proposition will be proved once we show that this image is
\[ \xi := \sum_{\alpha \in \Delta} e^{-1}\check\omega_\alpha, \]
where $\Delta$ is the set of simple roots for $(T,B)$ and $\check\omega_\alpha$ is the fundamental coweight corresponding to $\alpha$. In fact, since we are already assuming that $x \in \ol{C}$, it will be enough to show that the image of $t \in e^{-1}P$ lies in the same $\Omega_a(T_\tx{ad},G_\tx{ad})$-orbit as $\xi$.

Taking $\Gamma(E/F)$-fixed points in the last exact sequence we obtain an isomorphism
\[ e^{-1}P^\vee/P^\vee \rw H^1(E,T_\tx{ad}(O_E)). \]
The kernel of the reduction map $T_\tx{ad}(O_E) \rw T_\tx{ad}(k_F)$ is an abelian pro-p-group, while $\Gamma(E/F)$ has prime-to-p order. It follows that $H^1(E,T_\tx{ad}(O_E)) \rw H^1(E,T_\tx{ad}(k_F))$ is injective. The action of $\Gamma(E/F)$ on $T_\tx{ad}(k_F)$ is trivial, hence evaluation at $\sigma$ provides an isomorphism
\[ H^1(E,T_\tx{ad}(k_F)) \rw T_\tx{ad}(k_F)[e], \]
the latter group being the group of $e$-torsion points of $T_\tx{ad}(k_F)$. Composing these maps we obtain the $\Omega(T_\tx{ad},G_\tx{ad})$-equivariant injection
\[ e^{-1}P^\vee/P^\vee \rw T_\tx{ad}(k_F)[e]. \]
The image of $\xi$ under this injection equals
\[ \sum_{\alpha \in \Delta} \check w_\alpha\left[\frac{\omega}{\sigma(\omega)}\right], \]
while image of $t$ under the composition of $T_\tx{ad}(E) \rw e^{-1}P^\vee$ with this injection equals $[t^{-1}\sigma(t)]$. We seek to show that those two elements of $T_\tx{ad}(k_F)[e]$ lie in the same Weyl-orbit. Let us call them $\bar\xi$ and $\bar t$. It is clear that $\bar\xi$, considered as an element of $G_\tx{ad}(k_F)$, is regular. Its order is $e$, which is equal to the Coxeter number of $G_\tx{ad}$ since $j$ is an embedding of type (C). But it is known that there is a unique $G(\ol{k_F})$-conjugacy class of regular elements of that order \cite[III.2.12]{SpSt70}. Of course this class is semi-simple in our case, as $p$ and $e$ are coprime. In particular there is a unique Weyl-orbit of regular elements in $T_\tx{ad}(k_F)[e]$. Hence our task is to show that $\bar t$ is regular.

Recall that
\[ q^{-1}\sigma(q) = h^{-1}t^{-1}\sigma(t)\sigma(h) \]
belongs to $N(T_\tx{ad},G_\tx{ad})(O_E)$ and is a lift of a Coxeter element. The image of this element under the reduction map $G_\tx{ad}(O_E) \rw G_\tx{ad}(k_F)$ is equal to $\bar h\bar t \bar h^{-1}$, where $\bar h$ is the image of $h$ under the reduction map $\tilde G_\tx{ad}(O_E) \rw \tilde G_\tx{ad}(k_F)$, and $\tilde G_\tx{ad}$ is the Bruhat-Tits $O_E$-group scheme whose $O_E$-points are $G_\tx{ad}(E)_o$. This is still a lift of a Coxeter element, hence a regular element \cite[III.2.3]{SpSt70}.
\end{proof}

Let $j : S \rw G$ be an embedding of type (C), $x$ its associated point in $\mc{B}(G,F)$, and $C$ the alcove whose barycenter is $x$. Let $dj : \mf{s} \rw \mf{g}$ be the differential of $j$. By \cite[1.9.1]{Ad98} we have

$dj(\mf{s}(F)_r)=dj(\mf{s}(F))_r$ for all $r$. Then for $r=\frac{1}{h},\frac{2}{h}$ we have by \cite[1.9.3]{Ad98}
\begin{equation} \label{eq:perp} \mf{g}(F)_r = dj(\mf{s}(F))_r \oplus [dj(\mf{s}(F))^\perp]_{x,r}, \end{equation}
where $\perp$ is taken with respect to the Killing form on $\mf{g}(F)$ and $[\ ]_{x,r}$ means intersection with $\mf{g}(F)_{x,r}$. It should be noted that the statement in loc. cit. refers not to the Killing form, but to what is called there a ``good'' bilinear form, which is a form constructed in \cite[\S4]{AdRo00}. Such a good bilinear from exists in our case, because the assumptions of \cite[\S4]{AdRo00} are implied by the existence of an embedding of type (C), as remarked in Section \ref{sec:notpre}. The group $G$ being simple, a good bilinear form is just a scalar multiple of the Killing form, and hence provides the same notion of perpendicularity.

Projecting onto the first factor in the decomposition \eqref{eq:perp} and composing with $dj^{-1}$ we obtain a homomorphism of $k_F$-vector spaces
\begin{equation} \label{eq:afgenproj} \mf{g}(F)_{x,\frac{1}{h}}/\mf{g}(F)_{x,\frac{2}{h}} \rw \mf{s}(F)_\frac{1}{h}/\mf{s}(F)_\frac{2}{h} \end{equation}

\begin{pro} \label{pro:afgenproj} Let $\alpha$ be an affine simple root with respect to the alcove $C$. The restriction of \eqref{eq:afgenproj} to the subspace
\[ \mf{g}_\alpha(F)_{x,\frac{1}{h}} / \mf{g}_\alpha(F)_{x,\frac{2}{h}} \]
is non-trivial.
\end{pro}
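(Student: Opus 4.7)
My plan is to split the argument into a surjectivity reduction, a Killing-form transport, and a Galois-cocycle computation that localizes the problem to a scalar non-vanishing.

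First I would observe that \eqref{eq:afgenproj} is surjective: if $Y \in \mf{s}(F)_{1/h}$, then $dj(Y)$ lies in the first summand $dj(\mf{s}(F))_{1/h}$ of \eqref{eq:perp}, so composing the projection with $dj^{-1}$ recovers $Y$. By Proposition~\ref{pro:sfilt} the target is one-dimensional over $k_F$, while the source decomposes as $\bigoplus_i V_i$ with $V_i := \mf{g}_{\alpha_i}(F)_{x,1/h}/\mf{g}_{\alpha_i}(F)_{x,2/h}$ each one-dimensional and indexed by the $n+1$ simple affine roots of $C$. Hence the kernel of \eqref{eq:afgenproj} is a codimension-one hyperplane in the source, and the proposition becomes the statement that no $V_i$ is contained in this hyperplane.

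Since the first factor of \eqref{eq:perp} is by construction the Killing-orthogonal complement of $dj(\mf{s}(F))^\perp$ intersected with the filtration piece, the image of $X \in V_\alpha$ in the target vanishes iff $\langle X, H\rangle$ has sufficiently large valuation for every $H \in dj(\mf{s}(F))$ of the appropriate depth. I would transport the pairing to the split side via $q \in G_\tx{ad}(E)$ from Proposition~\ref{pro:bary} (with $qo = x$, $\Ad(q) T_0 = j(S)$, and $q^{-1}\sigma(q) \equiv \bar w$ lifting a Coxeter element in $N(T_0, G_\tx{ad})(O_E)$): writing $H = \Ad(q) H_0$ for $H_0 \in \mf{t}_0(E)$ and using $\Ad$-invariance converts the pairing into $\langle \Ad(q^{-1}) X_\alpha, H_0\rangle$. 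Killing-perpendicularity of $T_0$-root spaces to $\mf{t}_0$ then reduces everything to showing that the $\mf{t}_0$-component of $\Ad(q^{-1}) X_\alpha$, modulo the next filtration piece, is non-zero. Since $\Ad(\mu(\omega)^{-1})$ centralises $\mf{t}_0$ pointwise, this equals the $\mf{t}_0$-component of $\Ad(p^{-1}) X_\alpha$.

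The Galois cocycle $\sigma(q) = q\bar w$ gives $\sigma(\Ad(q^{-1}) X_\alpha) = \Ad(\bar w^{-1}) \Ad(q^{-1}) X_\alpha$, and isolating the Cartan part modulo $\omega_E^2$ forces this component, after an $\omega_E$-shift, into the $\zeta$-eigenspace of the Coxeter element $\bar w$ on $\mf{t}_0(k_F)$, where $\zeta = [\sigma(\omega)/\omega]$. By the proof of Proposition~\ref{pro:sfilt}, this eigenspace is one-dimensional and is exactly the target $\mf{s}(F)_{1/h}/\mf{s}(F)_{2/h}$. The main obstacle is the final non-vanishing step: one must show that the Cartan part is not merely Galois-covariant of the correct type but actually non-zero. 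Equivalently, one must rule out that $\mf{t} = \Ad(q)\mf{t}_0$ has trivial $\mf{g}_{-\alpha}^{T_0}$-component in the $T_0$-root decomposition of $\mf{g}$, which would amount to $\mf{g}_\alpha^{T_0}$ being Killing-orthogonal to $\mf{t}$. I expect this to come out of a short explicit calculation showing that the Cartan component coincides, up to a $k_F^\times$ scalar, with the reduction of the coroot $\check\alpha$ (and with an extra $\omega$ factor in the $\alpha_0$ case), which is the essential input borrowed from \cite[1.9.3]{Ad98}.
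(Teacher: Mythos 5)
Your setup coincides with the paper's: via the orthogonal decomposition \eqref{eq:perp}, the assertion becomes the statement that the one-dimensional space $dj(\mf{s}(F)_{-\frac{1}{h}}/\mf{s}(F)_0)$ pairs non-trivially (under the Killing form on the graded pieces) with each affine simple root space, and your surjectivity/hyperplane remarks are correct. But the argument stops exactly where the real work begins. The final non-vanishing --- which you yourself call the main obstacle --- \emph{is} the content of the proposition, and you dispose of it with the expectation that "a short explicit calculation" shows the Cartan component of $\Ad(q^{-1})X_\alpha$ is, up to a $k_F^\times$ scalar, the reduction of $\check\alpha$, citing \cite[1.9.3]{Ad98}. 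That reference only supplies the decomposition \eqref{eq:perp} you have already used; it says nothing about individual root components. Moreover, the asserted identification cannot be read off for an arbitrary $q$ realizing the embedding: the Cartan component of $\Ad(q^{-1})X_\alpha$ depends essentially on the Coxeter twisting packed into $q$ (if $q$ normalized $T$ it could vanish outright), so some structural input beyond "$q^{-1}\sigma(q)$ lifts a Coxeter element" is required, together with careful bookkeeping of the shift between the $x$- and $o$-filtrations for the simple roots versus the highest root, which you only gesture at.

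The paper closes precisely this gap by two reductions plus one structural fact absent from your proposal. It first reduces, using an unramified base change and Corollary \ref{cor:adstab} (stable conjugates with the same barycenter are conjugate under $G_\tx{ad}(F)_x$, whose action on the graded pieces preserves the decomposition \eqref{eq:decomp}), to a completely explicit conjugate $j'$ built from $c_s=\prod_{\alpha\in\Delta}\check\omega_\alpha(\zeta_h^{-1})$ and $t=\prod_{\alpha\in\Delta}\check\omega_\alpha(\omega^{-1})$. Then, rather than computing Cartan components of root vectors, it exhibits an explicit element $X=\Ad(p)X_0 \in \mf{s}(F)_{-\frac{1}{h}}\sm\mf{s}(F)_0$, with $X_0$ a $\zeta_h$-eigenvector of $\Ad(c)$ in $\mf{t}$, observes that $X$ is supported on $\bigoplus_{\alpha\in-\Pi}\mf{g}_\alpha$, and invokes Kostant's lemma that a \emph{semi-simple} element supported on the simple roots together with the lowest root has all coordinates non-zero; semi-simplicity is automatic since $X$ lies in the Lie algebra of a torus. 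Some argument of this kind (or an honest computation replacing it) is indispensable; as written, your proposal assumes the conclusion at the decisive step.
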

\begin{proof}
Let $o$ denote a special vertex contained in $\ol{C}$ and $T$ be a split maximal torus whose apartment contains $C$. Let $B$ be a bilinear form on $\mf{g}(F)$ as constructed in \cite[\S4]{AdRo00}. It restricts to a non-degenerate bilinear form
\begin{equation*} \label{eq:bbar} \bar B : \mf{g}(F)_{x,-\frac{1}{h}}/\mf{g}(F)_{x,0} \times \mf{g}(F)_{x,\frac{1}{h}}/\mf{g}(F)_{x,\frac{2}{h}} \rw k_F
\end{equation*}
The two arguments of $\bar B$ have the following direct sum decompositions
\begin{eqnarray} \label{eq:decomp}
\mf{g}(F)_{x,\frac{1}{h}}/\mf{g}(F)_{x,\frac{2}{h}}&=&\mf{g}_{-\eta}(F)_{o,1}/\mf{g}_{-\eta}(F)_{o,2}\oplus \bigoplus_{\alpha \in \Delta} \mf{g}_\alpha(F)_{o,0} / \mf{g}_\alpha(F)_{o,1}\nonumber\\
\mf{g}(F)_{x,-\frac{1}{h}}/\mf{g}(F)_{x,0}&=&\mf{g}_{\eta}(F)_{o,-1}/\mf{g}_{\eta}(F)_{o,0}\oplus \bigoplus_{\alpha \in -\Delta} \mf{g}_\alpha(F)_{o,0} / \mf{g}_\alpha(F)_{o,1}
\end{eqnarray}
where $\Delta$ is the set of simple roots and $\eta$ is the highest root. Put $\Pi=\Delta \cup \{-\eta\}$ and for $\alpha \in \Pi$ let $\mf{\bar g}_\alpha$ resp. $\mf{\bar g}_{-\alpha}$ denote the corresponding constituents of the first resp. second decomposition in \eqref{eq:decomp}.

Inside the first argument of $\bar B$ we have the subspace
\[ \mf{s}(F)_{-\frac{1}{h}}/\mf{s}(F)_0 \]
embedded via $dj$. This subspace is one-dimensional due to Proposition \ref{pro:sfilt} and the fact that multiplication by an uniformizer of $F$ provides an isomorphism $\mf{s}(F)_r \rw \mf{s}(F)_{r+1}$ for any $r \in \R$.

The proof of the current proposition will be complete if we show that this one-dimensional subspace is not $\bar B$-orthogonal to $\mf{\bar g}_\alpha$ for any $\alpha \in \Pi$. But the orthogonal complement of a given $\mf{\bar g}_\alpha$ is precisely the direct sum of $\mf{\bar g}_\beta$ for $-\beta \in \Pi \sm \{\alpha\}$. Hence it will be enough to find an element of $\mf{s}(F)_{-\frac{1}{h}}/\mf{s}(F)_0$ whose coordinate for each $-\beta \in \Pi$ is non-trivial.

There are two reductions we need to make. First, notice that we are free to replace $F$ by any finite unramified extension. Second, we are going to show that we are free to replace $j$ with any stable conjugate as long as the associated point in $\mc{B}(G,F)$ remains unchanged. To see why this is true, recall first that by Corollary \ref{cor:adstab}, this will replace $j$ by a conjugate under $G_\tx{ad}(F)_x$. By the invariance of $B$ we may shift the conjugation from the left argument of $\bar B$ to its right argument. The action of $G_\tx{ad}(F)_x$ on the right argument of $\bar B$ factors through $G_\tx{ad}(F)_x/G_\tx{ad}(F)_{x,\frac{1}{h}}$ and one easily sees that this action preserves the decomposition \eqref{eq:decomp}.

With these reductions in place, we are now going to construct a certain finite unramified extension $\tilde F$ of $F$, a certain stable conjugate $j'$ of $j$, and then proceed to construct the sought element of $\mf{s}(F)_{-\frac{1}{h}}/\mf{s}(F)_0$.

The unramified extension $\tilde F$ is constructed as follows. Let $F' \subset E$ be the maximal unramified subextension of $F$, $\sigma \in \Gamma(E/F')$ a generator and $\omega \in E$ a uniformizer such that $\sigma(\omega)\omega^{-1}=\zeta_h$ is a root of unity in $F'$ of order $h$. Put
\[ \bar c_s := \prod_{\alpha \in \Delta} \check\omega_\alpha(\zeta_h^{-1}) \in T_\tx{ad}(O_{F'}), \]
There exists a lift $c_s \in T(O_{F^u})$ of $\bar c_s$.

Let $\tilde F$ be such that $c_s \in T(O_{\tilde F})$.

To ease notation, we now replace $F$ by $\tilde F$. Then $E$ is replaced by $E\tilde F$. The group $\Gamma(E/F)$ remains unchanged and we keep the notations $\sigma$,$\omega$ and $\zeta_h$.

Next, we construct the appropriate stable conjugate of $j$. Let $c \in N(T,G)(O_F)$ be a lift of a Coxeter element. According to \cite[III.2.12]{SpSt70}, the elements $c$ and $c_s$ are conjugate under $G(\ol{F})$, hence by \cite[7.1]{Ko86} also under $G(O_F)$. Let $h \in G(O_F)$ be such that $\tx{Ad}(h)c=c_s$. Put
\[ t := \prod_{\alpha \in \Delta} \check\omega_\alpha(\omega^{-1}) \in T_\tx{ad}(E). \]
Then $t^{-1}\sigma(t)=c_s$. Put $p := th \in G_\tx{ad}(E)$. Then we have
\[ p^{-1}\sigma(p) = c, \qquad \tx{and}\qquad \tx{Ad}(p)c=c_s. \]
We can choose an admissible isomorphism $S \rw T$ which transports the action of $\sigma$ on $X^*(S)$ to the action of $c$ on $X^*(T)$. Composing this isomorphism with $\tx{Ad}(p)$ we obtain an admissible embedding $j' : S \rw G$ of type (C). The unique fixed point for the action of $\tx{Ad}(c)\sigma$ on $\mc{A}(T,E)$ is $o$, hence the unique fixed point for the action of $\Gamma(E/F)$ on $\mc{A}(j'(S),E)$ is $po=to=x$. The embedding $j'$ is the stable conjugate of $j$ that we were looking for. Again, we ease notation by replacing $j$ by $j'$.

Before we continue, recall the following simple fact: If
\[ \mf{g} = \mf{t} \oplus \bigoplus_\alpha \mf{g}_\alpha \]
is the root decomposition of a semi-simple Lie algebra, $\Delta$ is a set of simple roots and $\Pi$ is the union of $\Delta$ and the negative of the highest root, then an element
\[ X \in \bigoplus_{\alpha \in \Pi} \mf{g}_\alpha \]
is semi-simple only if all of its coordinates are non-zero. This statement is \cite[Lemma 7.2]{Kt59}, where it is proved for complex Lie algebras, but the proof clearly does not depend on the complex field.

We will now construct the special element $X \in \mf{s}(F)_{-\frac{1}{h}} \sm \mf{s}(F)_0$. Let $\tilde X_0 \in \mf{t}(F)$ be any eigenvector of $\tx{Ad}(c)$ with eigenvalue $\zeta_h$. Multiplying by an appropriate power of a uniformizer of $F$ we may arrange that $\tilde X_0 \in \mf{t}(F)_0 \sm \mf{t}(F)_{0+}$. Put $X_0 := \omega^{-1}\tilde X_0$, so that $X_0 \in \mf{t}(E)_{-\frac{1}{h}} \sm \mf{t}(E)_0$. By construction we have
\[ \tx{Ad}(c)X_0=\zeta_h X_0, \qquad \tx{and}\qquad \tx{Ad}(c)\sigma(X_0)=X_0, \]
so that $X:=\tx{Ad}(p)X_0$ is an element of $\mf{s}(F)_{-\frac{1}{h}} \sm \mf{s}(F)_0$ which is an eigenvector of $\tx{Ad}(c_s)$ with eigenvalue $\zeta_h$. Recalling the explicit form of $c_s$ and the fact that the hight of the highest root is $h-1$, the standard root decomposition of $\mf{g}(F)$ shows that $X$ belongs to the subspace
\[ \bigoplus_{\alpha \in -\Pi} \mf{g}_\alpha(F). \]
We know that the projection of $\omega X_0$ to $\mf{g}(E)_{o,0} / \mf{g}(E)_{o,\frac{1}{h}}$ is a semi-simple element in this $k_E$-Lie algebra, thus the projection of $\omega X$ to $\mf{g}(E)_{x,0} / \mf{g}(E)_{x,\frac{1}{h}}$ is a semi-simple element as well. The fact that we recalled above now implies that for each $\alpha \in -\Delta$, the coordinate of $\omega X$ in $\mf{g}_\alpha(E)$, which a-priori belongs to $\mf{g}_\alpha(E)_{x,0}=\mf{g}_\alpha(E)_{o,\frac{1}{h}}$, does not belong to $\mf{g}_\alpha(E)_{x,\frac{1}{h}}=\mf{g}_\alpha(E)_{o,\frac{2}{h}}$, while for $\alpha$ being the highest root, the corresponding coordinate belongs to $\mf{g}_\alpha(E)_{o,\frac{1-h}{h}} \sm \mf{g}_\alpha(E)_{o,\frac{2-h}{h}}$. Using the fact that $X$ and each $\mf{g}_\alpha$ are defined over $F$, we see the image of $X$ in $\mf{g}(F)_{x,-\frac{1}{h}}/\mf{g}(F)_{x,0}$ has non-zero coordinates (with respect to the decomposition \eqref{eq:decomp}) for all $-\alpha \in \Pi$.

\end{proof}

The following proposition can be proved by the very same argument. We will not need it in this paper, but it may be of independent interest.

\begin{pro} Let $j : S \rw G$ be an embedding of type (C). Let $C$ be the alcove determined by $j$ and $o$ a hyperspecial vertex in $\ol{C}$. Let $dj : \mf{s} \rw \mf{g}$ be the differential of $j$. Then the image under $dj$ of every element of $\mf{s}(F)_\frac{1}{h} \sm \mf{s}(F)_\frac{2}{h}$ projects to a regular unipotent element in $\mf{g}(k_F)$.
\end{pro}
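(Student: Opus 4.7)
Since $\mf{s}(F)_{\frac{1}{h}}/\mf{s}(F)_{\frac{2}{h}}$ is a one-dimensional $k_F$-vector space by Proposition \ref{pro:sfilt}, it is enough to exhibit a single element $X \in \mf{s}(F)_{\frac{1}{h}}\sm\mf{s}(F)_{\frac{2}{h}}$ whose image in $\mf{g}(k_F)$ is regular unipotent. Exactly as in Proposition \ref{pro:afgenproj}, the statement is preserved when we replace $j$ by a stably conjugate embedding with the same associated point $x$ (by Corollary \ref{cor:adstab}, together with the observation that the action of $G_\tx{ad}(F)_x$ on each affine root space is by scalars) or when we extend $F$ by a finite unramified extension, so both reductions are freely available.

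With those reductions in hand, recycle the construction of the stable conjugate $j'$ from Proposition \ref{pro:afgenproj}: enlarge $F$ so that $\zeta_h \in F'$ and the element $c_s$ of that proof lies in $T(O_F)$, fix a lift $c \in N(T,G)(O_F)$ of a Coxeter element, and produce $p = th \in G_\tx{ad}(E)$ with $p^{-1}\sigma(p) = c$, $\Ad(p)c = c_s$, $po = x$, and $\Ad(p)T = S$. Only the eigenvector input changes: let $\tilde X_0 \in \mf{t}(F)_0\sm\mf{t}(F)_{0+}$ be an eigenvector of $\Ad(c)$ with eigenvalue $\zeta_h^{-1}$, which exists because $h-1$ occurs as an exponent of the root system of $G$ with multiplicity one. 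Set $X_0 := \omega\tilde X_0 \in \mf{t}(E)_{\frac{1}{h}}\sm\mf{t}(E)_{\frac{2}{h}}$ and $X := \Ad(p)X_0 \in \mf{s}(E)$. A direct computation yields $\Ad(c)\sigma(X_0) = X_0$, whence $\sigma(X) = X$ and $X \in \mf{s}(F)$ via $j'$; similarly $\Ad(c_s)X = \zeta_h^{-1}X$. Since $\Ad(c_s)$ acts on each root space $\mf{g}_\beta$ by the scalar $\zeta_h^{-\tx{ht}(\beta)}$, its $\zeta_h^{-1}$-eigenspace on $\mf{g}(F)$ is precisely $\bigoplus_{\alpha\in\Pi}\mf{g}_\alpha(F)$ with $\Pi = \Delta\cup\{-\eta\}$, so $X$ decomposes as $\sum_{\alpha\in\Pi} X_\alpha$ with $X_\alpha \in \mf{g}_\alpha(F)$.

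The endgame of Proposition \ref{pro:afgenproj} now applies in the ``positive'' direction. The element $\omega^{-1}X = \Ad(p)\tilde X_0$ lies in $\mf{g}(E)_{x,0}$, and its reduction modulo $\mf{g}(E)_{x,0+}$ is the $\Ad(p)$-image of the non-zero semi-simple element $\overline{\tilde X_0} \in \mf{t}(k_F) \subset \mf{g}(k_E)$, hence is itself a semi-simple element of the $k_E$-Lie algebra $\mf{g}(E)_{x,0}/\mf{g}(E)_{x,0+}$. The Kostant fact \cite[Lemma 7.2]{Kt59} recalled in the proof of Proposition \ref{pro:afgenproj}, applied to the decomposition $\omega^{-1}X \in \bigoplus_{\alpha\in\Pi}\mf{g}_\alpha(E)$, then forces the $\alpha$-coordinate of $\omega^{-1}X$ in $\mf{g}_\alpha(E)_{x,0}/\mf{g}_\alpha(E)_{x,0+}$ to be non-zero for every $\alpha\in\Pi$. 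For $\alpha\in\Delta$, $\mf{g}_\alpha(E)_{x,0} = \omega^{-1}\mf{g}_\alpha(O_E)$, so this non-vanishing translates to $X_\alpha \notin \varpi\mf{g}_\alpha(O_F)$, i.e., to a non-zero image in $\mf{g}_\alpha(k_F)$. Since $X_{-\eta}\in\varpi\mf{g}_{-\eta}(O_F)$ reduces to zero in $\mf{g}(k_F)$, the image of $X$ in $\mf{g}(k_F)$ equals $\sum_{\alpha\in\Delta}\bar X_\alpha$ with each $\bar X_\alpha \neq 0$, a regular unipotent (equivalently, regular nilpotent) element, as desired.

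The main obstacle is routine filtration bookkeeping: keeping straight the $F$-filtration at the barycenter $x$ versus the $E$-filtration at the hyperspecial vertex $o$, and refining Kostant's conclusion at the level of $\mf{g}(E)_{x,0}/\mf{g}(E)_{x,0+}$ to non-vanishing modulo $\varpi\mf{g}(O_F)$. No ingredients beyond those already used in Proposition \ref{pro:afgenproj} are required.
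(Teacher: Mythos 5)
In substance your proof is the argument the paper intends: the paper gives no separate proof of this proposition, saying only that it follows by the very same argument as Proposition \ref{pro:afgenproj}, and you run exactly that argument in the opposite direction, with the eigenvalue $\zeta_h^{-1}$ (the exponent $h-1$) in place of $\zeta_h$. The construction of $X$, the identification of the $\zeta_h^{-1}$-eigenspace of $\tx{Ad}(c_s)$ with $\bigoplus_{\alpha\in\Pi}\mf{g}_\alpha$, the Kostant semisimplicity step at the point $x$ over $E$, and the valuation bookkeeping translating it into non-vanishing mod $\varpi$ are all correct.

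The one step whose justification is wrong is the reduction to a stable conjugate: the claim that $G_\tx{ad}(F)_x$ acts on each affine root space by scalars is false whenever $Z(G)\neq 1$. The stabilizer of the barycenter realizes every automorphism of the alcove coming from $G_\tx{ad}(F)$ (compare Proposition \ref{pro:caut} and Lemma \ref{lem:kotalt}), and these automorphisms permute the summands of \eqref{eq:decomp}; already for $G=\tx{SL}_2$ an element of $G_\tx{ad}(F)_x$ interchanges the two affine simple root spaces. Hence the property ``non-zero coordinates along each $\alpha\in\Delta$'' is not literally preserved when $j$ is replaced by a $G_\tx{ad}(F)_x$-conjugate, and the reduction, as you justify it, does not go through. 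The repair is already contained in what you prove: the Kostant step gives the stronger statement that the image of $dj(X)$ in $\mf{g}(F)_{x,1/h}/\mf{g}(F)_{x,2/h}$ has non-zero coordinate in \emph{every} summand of \eqref{eq:decomp}, i.e. for every $\alpha\in\Pi$, and this statement is $G_\tx{ad}(F)_x$-invariant because that group permutes and rescales the summands (this is how the analogous reduction is handled in the proof of Proposition \ref{pro:afgenproj}, where the action is only asserted to ``preserve the decomposition''). Since regular nilpotency of the reduction at a hyperspecial vertex $o\in\ol{C}$ only requires non-vanishing of the coordinates along those affine simple roots of $C$ which vanish at $o$, and since $dj(\mf{s}(F)_{2/h})\subset\mf{g}(F)_{x,2/h}$ has zero image in the graded quotient (which is also what legitimizes your one-dimensionality reduction to the single element $X$), the stronger statement yields the proposition for the original $j$ and any hyperspecial $o$; with the reduction rephrased this way, your proof is complete and coincides with the paper's intended argument.
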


\subsection{Automorphisms of an alcove}

Let $j : S \rw G$ be an embedding of type (C), and let $x \in \mc{B}(G,F)$ be the point associated to it. By Proposition \ref{pro:bary}, $x$ is the barycenter of some alcove $C$. Then the action of $S_\tx{ad}(F)$ on $\mc{B}(G,F)$ preserves $C$. On the other hand, there is a natural action of $G_\tx{ad}(F)$ on $C$: For any $g \in G_\tx{ad}(F)$ we choose $h \in G(F)$ such that $hgC=C$. Then $hg$ is an automorphism of $C$ and does not depend on the choice of $h$, because any element of $G(F)$ which preserves $C$ fixes it pointwise.

\begin{pro} \label{pro:caut} Every automorphism of $C$ coming from $\tx{G}_\tx{ad}(F)$ can be realized by an element of $S_\tx{ad}(F)$.
\end{pro}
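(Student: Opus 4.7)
The plan is to study the homomorphism $\phi : S_\tx{ad}(F) \rw \tx{Aut}(C)$ given by the natural action on $C$, which is well-defined because every element of $S_\tx{ad}(F)$ fixes the barycenter $x$ and hence stabilizes $C$. I will identify the kernel of $\phi$, count the order of the quotient, and then match that count with the known order of the image of $G_\tx{ad}(F)$ inside $\tx{Aut}(C)$.

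For the kernel: the pointwise stabilizer of $C$ in $G_\tx{ad}(F)$ equals the parahoric $G_\tx{ad}(F)_{x,0}$ because $x$ lies in the interior of $C$, and because $S_\tx{ad}$ is anisotropic its apartment in $\mc{B}(G,F)$ reduces to the single point $x$, so Moy-Prasad functoriality yields $S_\tx{ad}(F) \cap G_\tx{ad}(F)_{x,0} = S_\tx{ad}(F)_0$. Hence $\ker\phi = S_\tx{ad}(F)_0$ and $\phi$ descends to an injection $\bar\phi : S_\tx{ad}(F)/S_\tx{ad}(F)_0 \hrw \tx{Aut}(C)$.

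For the count: I would rerun the argument of Proposition \ref{pro:sprod} with $X^*(S_\tx{ad}) = Q$ in place of $X^*(S) = P$, obtaining $|S_\tx{ad}(F)/S_\tx{ad}(F)_0| = |H^1(I,Q)_\sigma|$. The short exact sequence $0 \rw Q \rw P \rw P/Q \rw 0$ of $I$-modules, combined with the vanishings $H^0(I,Q) = H^0(I,P) = 0$ (ellipticity of the Coxeter element), $H^2(I,Q) = 0$ (cyclic periodicity), and the already-established triviality of $H^1(I,Q) \rw H^1(I,P)$ from Lemma \ref{lem:trivcoh}, forces the connecting homomorphism $\delta : P/Q \rw H^1(I,Q)$ to be an isomorphism. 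Since $\delta$ is $\sigma$-equivariant by naturality and $\sigma$ acts on $P$ through the Weyl group, it acts trivially on $P/Q$ and hence trivially on $H^1(I,Q)$, giving $|H^1(I,Q)_\sigma| = |P/Q|$. The image of $G_\tx{ad}(F)$ in $\tx{Aut}(C)$ is the fundamental group $P^\vee/Q^\vee$, also of order $|P/Q|$, as one sees by producing explicit lifts of its elements inside $N(T_\tx{ad},G_\tx{ad})(F)$ from translations $\mu(\omega)$ for $\mu \in P^\vee$. Since $\bar\phi$ injects into a group of equal order, its image must be the full image of $G_\tx{ad}(F)$.

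The main obstacle will be the cohomology computation, where one must track the $\sigma$-action on $H^1(I,Q)$ and recognize that it becomes trivial via the identification with $P/Q$ from the connecting map -- the key contrast with Proposition \ref{pro:sprod}, where the corresponding $\sigma$-action on $H^1(I,P)$ turned out to be multiplication by $q^{-1}$ and produced the smaller order $|Z(G)(F)|$ rather than $|P/Q|$.
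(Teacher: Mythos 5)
Your counting strategy is genuinely different from the paper's (the paper proves directly that the restriction of the Kottwitz homomorphism to $S_\tx{ad}(F)$ surjects onto $\Lambda$, via Lemma \ref{lem:kotalt}, the injection $S_\tx{ad}(F)/S_\tx{ad}(F)_{0+} \hrw H^1(F,Z)$, and an unramified descent of the component group of the N\'eron model), and your cohomological computation itself is fine: $\delta : P/Q \rw H^1(\langle c\rangle,Q)$ is an isomorphism by Lemma \ref{lem:trivcoh} and ellipticity, it is $\sigma$-equivariant, $\sigma$ acts on $P/Q$ through the Weyl group and hence trivially, so $|S_\tx{ad}(F)/S_\tx{ad}(F)_0|=|P/Q|$. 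But two load-bearing steps are asserted rather than proved, and the first is the crux. You need the image of $G_\tx{ad}(F)$ in $\tx{Aut}(C)$ to have order \emph{at most} $|P^\vee/Q^\vee|$; the justification you give (explicit lifts of elements of $P^\vee/Q^\vee$ from cocharacters evaluated at a uniformizer) proves only the containment $\supseteq$, i.e. the wrong inequality. Since $\tx{Aut}(C)$ is in general strictly larger than $P^\vee/Q^\vee$ (already the affine diagram automorphism group is larger, e.g. in type $A_n$), your order-matching argument cannot conclude without this bound. It is exactly what the paper's Kottwitz discussion supplies (the $G_\tx{ad}(F)$-action on $C$ factors through $\kappa : G_\tx{ad}(F)\rw\Lambda$); alternatively, note that $T_\tx{ad}(F)\rw H^1(F,Z)$ is surjective by Hilbert 90 for the split torus $T$, so $G_\tx{ad}(F)$ is generated by the image of $G(F)$ (which acts trivially on $C$ by the very definition of the action) and $T_\tx{ad}(F)$, whose action on $C$ visibly factors through $P^\vee/Q^\vee$. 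Also, a small slip: $\mu(\omega)$ with $\omega$ a uniformizer of $E$ lies in $T_\tx{ad}(E)$, not $T_\tx{ad}(F)$; you want to evaluate $\mu$ at a uniformizer of $F$.

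The second gap is the kernel identification. The equality $S_\tx{ad}(F)\cap G_\tx{ad}(F)_{x,0}=S_\tx{ad}(F)_0$ for a tamely ramified maximal torus is a genuine compatibility between the connected N\'eron model and the parahoric, not a formal consequence of ``Moy--Prasad functoriality'' (which concerns positive depth); likewise the claim that the pointwise stabilizer of $C$ in $G_\tx{ad}(F)$ is exactly the Iwahori already uses the Kottwitz machinery for the adjoint group. For your count only the inclusion $S_\tx{ad}(F)\cap\ker(\phi)\subseteq S_\tx{ad}(F)_0$ is actually needed (it gives $[S_\tx{ad}(F):\ker\phi]\geq|P/Q|$, which suffices), and it can be supplied, e.g., by functoriality of the Kottwitz homomorphism together with the facts that for a torus its kernel is the group of points of the connected N\'eron model and that $(1-c)P^\vee=Q^\vee$, so that $X_*(S_\tx{ad})_I\rw P^\vee/Q^\vee$ is injective -- but some such argument must be made. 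With these two points repaired, your compare-the-orders proof does establish the proposition; as written, it does not yet close.
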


Before we can prove this Proposition, we need some preparation. We begin by recalling some basic facts about the Kottwitz homomorphism \cite[\S7]{Ko97}. Let $T \subset G$ be a split maximal torus whose apartment contains $C$. Let $Q^\vee$ and $P^\vee$ be its coroot and coweight lattices, and put $\Lambda = P^\vee/Q^\vee$. The Kottwitz homomorphism for $G_\tx{ad}$ is a surjective homomorphism
\[ G_\tx{ad}(F) \rw \Lambda \]
whose restriction to any parahoric subgroup is trivial, and whose restriction to $T_\tx{ad}(F)$ is given by
\[ \lambda(\pi) \mapsto \lambda, \qquad \forall \lambda \in P^\vee \]
where $\pi \in F$ is any uniformizer. The group $\Lambda$ acts on the alcove $C$, and Kottwitz's homomorphism intertwines the actions of $G_\tx{ad}(F)$ and $\Lambda$ on $C$.

\begin{lem} \label{lem:kotalt} There exists a bijection
\[ \Lambda \rw H^1(I,Z) \]
whose composition with the Kottwitz homomorphism equals
\begin{equation} \label{eq:kotalt} G_\tx{ad}(F) \rw H^1(\Gamma,Z) \rw H^1(I,Z) \end{equation}
which is the composition of the boundary homomorphism and the restriction map.
\end{lem}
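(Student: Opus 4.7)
The plan is to construct the bijection using the short exact sequence $1 \rw Z \rw T \rw T_\tx{ad} \rw 1$ of split $F$-groups, and then verify the diagram commutes by reducing to $T_\tx{ad}(F)$ via a Cartan argument. Taking continuous $I$-cohomology of this sequence, Hilbert~90 applied to $\Gal(\ol{F}/F^u)$ gives $H^1(I,T)=0$ (since $T$ is a product of copies of $\mb{G}_m$), so the connecting map yields an isomorphism $T_\tx{ad}(F^u)/T(F^u) \cong H^1(I,Z)$. Because $p \nmid |Z|$, the group scheme $Z$ is \'etale, so $T \rw T_\tx{ad}$ is smooth as a map of $O_{F^u}$-group schemes; strict Henselianness of $O_{F^u}$ then yields a surjection $T(O_{F^u}) \thrw T_\tx{ad}(O_{F^u})$. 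Combined with the valuation isomorphisms $T(F^u)/T(O_{F^u}) \cong Q^\vee$ and $T_\tx{ad}(F^u)/T_\tx{ad}(O_{F^u}) \cong P^\vee$, one identifies $T_\tx{ad}(F^u)/T(F^u) \cong P^\vee/Q^\vee = \Lambda$ via $\lambda \bmod Q^\vee \mapsto \lambda(\pi) \bmod T(F^u)$ for any uniformizer $\pi$ of $F$. Composing, I obtain the candidate bijection $\Phi : \Lambda \rw H^1(I,Z)$.

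For compatibility, note first that both the Kottwitz map and the composition \eqref{eq:kotalt} are group homomorphisms out of $G_\tx{ad}(F)$, the latter because $Z$ is central in $G$. Both vanish on the parahoric $K := G_\tx{ad}(F)_{o,0}$ associated to a hyperspecial vertex $o \in \ol{C}$: the Kottwitz map by its defining property, and the boundary-then-restriction because any $g \in K$ lifts --- by smoothness of $G \rw G_\tx{ad}$ and strict Henselianness of $O_{F^u}$ --- to some $\tilde g \in G(O_{F^u})$, and the cocycle $\sigma \mapsto \tilde g^{-1}\sigma(\tilde g)$ is $I$-fixed, hence trivial on $I$. The Cartan decomposition $G_\tx{ad}(F) = K \cdot T_\tx{ad}(F) \cdot K$ (valid because $o$ is hyperspecial) then reduces the compatibility check to elements of $T_\tx{ad}(F)$.

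On $T_\tx{ad}(F)$ the agreement is direct: for $\lambda \in P^\vee$, the Kottwitz map sends $\lambda(\pi)$ to $\lambda \bmod Q^\vee$, which $\Phi$ in turn carries to the connecting class in $H^1(I,Z)$ of $\lambda(\pi) \in T_\tx{ad}(F^u)$. By naturality of connecting maps with respect to the inclusion of short exact sequences $(Z \rw T \rw T_\tx{ad}) \hrw (Z \rw G \rw G_\tx{ad})$, this class coincides with the boundary-then-restriction of $\lambda(\pi)$ computed via the $G$-level sequence. The main obstacle I anticipate is the parahoric vanishing of the boundary-then-restriction, which ultimately rests on the surjectivity of the parahoric-level map $G(O_{F^u}) \thrw G_\tx{ad}(O_{F^u})$; once this smoothness fact is granted, the remaining verifications are routine diagram chases.
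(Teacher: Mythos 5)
Your proposal is correct, and it takes a genuinely different route from the paper's proof. The paper does not construct the bijection $\Lambda \rw H^1(I,Z)$ directly; instead it characterizes the Kottwitz map by its universal property (unique surjection trivial on $G_\tx{ad}(O_F)$ with prescribed restriction to $T_\tx{ad}(F)$), shows that \eqref{eq:kotalt} has the same kernel properties, and then establishes bijectivity by a cardinality argument: it proves $H^1(F,Z) \rw H^1(I,Z)$ is surjective (via $H^2(F^u,Z)=0$ and the fact that Frobenius acts trivially on $H^1(I_t,Z)=\tx{Hom}(I_t,Z)$), and separately that $|\Lambda| = |H^1(I,Z)|$. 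Your construction instead produces the bijection $\Phi$ explicitly as the composite of the valuation identification $\Lambda \cong T_\tx{ad}(F^u)/T(F^u)$ with the connecting isomorphism $T_\tx{ad}(F^u)/T(F^u) \cong H^1(I,Z)$ (Hilbert~90 over $F^u$), which eliminates both the surjectivity step and the counting step, replacing them with a naturality-of-$\delta$ diagram chase when restricting to $T_\tx{ad}(F)$. The remaining two ingredients --- vanishing of \eqref{eq:kotalt} on the hyperspecial parahoric via lifting to $G(O_{F^u})$ (this is where $p\nmid|Z|$ enters, and where the paper invokes its Lemma~\ref{lem:hensel}), and the reduction to $T_\tx{ad}(F)$ via Cartan --- coincide with the paper's. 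Overall your route is somewhat more constructive and economical; the paper's route has the minor virtue of exhibiting $|\Lambda|=|H^1(I,Z)|$ as an explicit byproduct, but this is not used elsewhere.
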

\begin{proof}
Fix a hyperspecial vertex $o \in \ol{C}$ and consider the Cartan decomposition
\[ G_\tx{ad}(F) = G_\tx{ad}(O_F)T_\tx{ad}(F)G_\tx{ad}(O_F). \]
The Kottwitz homomorphism is the unique surjective map $G_\tx{ad}(F) \rw \Lambda$ which is trivial on $G_\tx{ad}(O_F)$ and whose restriction to $T_\tx{ad}(F)$ is given by
\[ T_\tx{ad}(F) \rw T_\tx{ad}(F)/T_\tx{ad}(O_F) \rw P^\vee \rw P^\vee/Q^\vee. \]
We are going to show that
\begin{enumerate}
\item The map \eqref{eq:kotalt} is trivial on $G_\tx{ad}(O_F)$.
\item The restriction of \eqref{eq:kotalt} to $T_\tx{ad}(F)$ is surjective and factors through $T_\tx{ad}(F) \rw \Lambda$.
\item The finite groups $\Lambda$ and $H^1(I,Z)$ have the same size.
\end{enumerate}
First, consider the sequence of $\Gamma(F^u/F)$-modules
\[ 1 \rw Z(O_{F^u}) \rw G(O_{F^u}) \rw G_\tx{ad}(O_{F^u}) \rw 1. \]
It is exact due to lemma \ref{lem:hensel}, and gives rise to the bottom row of the diagram
\begin{diagram}
G_\tx{ad}(F)&\rTo&H^1(F,Z)\\
\uTo&&\uTo\\
G_\tx{ad}(O_F)&\rTo&H^1(F^u/F,Z(O_{F^u}))
\end{diagram}
in which the left vertical map is the natural inclusion and the right vertical map is the inflation map. Since the residual characteristic of $F$ does not divide the exponent of $Z$, we have $Z(O_{F^u}) = Z$, and from the inflation-restriction sequence we obtain that \eqref{eq:kotalt} is indeed trivial on $G_\tx{ad}(O_F)$. This shows point 1, as well as the claim that \eqref{eq:kotalt} factors through $T_\tx{ad}(F) \rw P^\vee$. Since \eqref{eq:kotalt} also annihilates the image of $T(F)$ in $T_\tx{ad}(F)$, we obtain that it factors through $T_\tx{ad}(F) \rw \Lambda$.

The surjectivity of \eqref{eq:kotalt} upon restriction to $T_\tx{ad}(F)$ follows from the surjectivity of the connecting homomorphism $T_\tx{ad}(F) \rw H^1(F,Z)$, which is due to $T$ being split, and from the surjectivity of the restriction homomorphism $H^1(F,Z) \rw H^1(I,Z)$, which we now argue. The group $H^2(F^u,Z)$ is trivial, because $Z$ is a product of finitely many $\mu_n$ for $n$ prime to the residual characteristic of $F$.
It follows that the image of the restriction map is equal to the group of fixed points of Frobenius acting on $H^1(I,Z)$. If $I_t$ denotes the tame quotient of inertia, then the inflation map $H^1(I_t,Z) \rw H^1(I,Z)$ is a Frobenius-equivariant bijection. Hence we are looking for the Frobenious-fixed points on $H^1(I_t,Z)=\tx{Hom}(I_t,Z)$. But Frobenius acts on both $I_t$ and $Z$ as multiplication by $q$ (if we think of $I_t$ and $Z$ additively for a moment), and so its action on $\tx{Hom}(I_t,Z)$ is trivial. This shows point 2.

For point 3 we only need to observe that since $I_t$ is pro-cyclic and acts trivially on $Z$, choosing any topological generator of $I_t$ provides a bijection from $H^1(I_t,Z)$ to $Z$, but the size of $Z$ is equal to the size of $\Lambda$.
\end{proof}

\begin{lem} \label{lem:hensel} Let $J$ be any connected reductive group defined over $O_F$. If $p$ does not divide $|\pi_0(Z(J))|$,
then $J(O_{F^u}) \rw J_\tx{ad}(O_{F^u})$ is surjective.
\end{lem}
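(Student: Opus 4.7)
The plan is to reduce the lifting of $O_{F^u}$-points across $J \rw J_\tx{ad}$ to a triviality statement for torsors over the algebraically closed residue field $\ol{k_F}$, exploiting that $O_{F^u}$ is a strictly henselian local ring. The hypothesis $p \nmid |\pi_0(Z(J))|$ enters only to guarantee that $Z(J)$ is smooth over $O_F$; once this is established, the rest of the argument is a standard Hensel-style lift.

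First I would verify that $Z(J)$ is smooth over $O_F$. Since $J$ is reductive, $Z(J)$ is a commutative $O_F$-group scheme of multiplicative type, its identity component $Z(J)^0$ is a torus (hence smooth), and its component group $\pi_0(Z(J))$ is a finite commutative group scheme of multiplicative type. The assumption that $|\pi_0(Z(J))|$ is coprime to $p$ forces $\pi_0(Z(J))$ to be étale over $O_F$ (its Cartier dual is an étale constant group of order prime to the residue characteristic). As an extension of an étale group scheme by a torus, $Z(J)$ is smooth.

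Next, given $g \in J_\tx{ad}(O_{F^u})$, I would form the fiber
\[ F_g := J \times_{J_\tx{ad}} \tx{Spec}(O_{F^u}), \]
where $\tx{Spec}(O_{F^u}) \rw J_\tx{ad}$ is the map corresponding to $g$. Because $J \rw J_\tx{ad}$ is an fppf $Z(J)$-torsor, $F_g$ is a $Z(J)$-torsor over $\tx{Spec}(O_{F^u})$; by the smoothness of $Z(J)$ established above, $F_g$ is smooth over $\tx{Spec}(O_{F^u})$. Lifting $g$ to an element of $J(O_{F^u})$ is the same as producing an $O_{F^u}$-point of $F_g$.

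Finally, since $F_g$ is smooth over the henselian local ring $O_{F^u}$, Hensel's lemma reduces the existence of an $O_{F^u}$-point to the existence of a $\ol{k_F}$-point on the special fiber. That special fiber is a torsor under the smooth commutative group scheme $Z(J)\otimes \ol{k_F}$ over the algebraically closed field $\ol{k_F}$, and every such torsor is trivial (equivalently, $H^1$ of a smooth group scheme over a separably closed field vanishes, as $\tx{Spec}(\ol{k_F})$ admits no nontrivial étale covers). This yields the required $\ol{k_F}$-point, which lifts by Hensel to an $O_{F^u}$-point of $F_g$, hence to a preimage of $g$ in $J(O_{F^u})$. The only delicate step is the smoothness of $Z(J)$; everything afterwards is formal.
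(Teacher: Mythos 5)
Your proof is correct, and it departs from the paper's in its handling of the final Hensel step. Both proofs share the core structure: deduce smoothness of $Z(J)$ from the hypothesis $p\nmid|\pi_0(Z(J))|$, observe that $J\rw J_\tx{ad}$ is a torsor under $Z(J)$, and conclude by Hensel's lemma. (One small quibble: your parenthetical on why $\pi_0(Z(J))$ is \'etale is phrased backwards --- the relevant fact is simply that a finite flat group scheme of multiplicative type of order prime to the residue characteristic is \'etale, not that its Cartier dual is --- but the conclusion is right.) Where you diverge is that the paper descends smoothness along the fpqc cover to conclude $J\rw J_\tx{ad}$ is smooth globally, and then reduces a given $O_{F^u}$-point of $J_\tx{ad}$ to one over a finite unramified subextension $O_E$, invoking formal smoothness iteratively over $O_E/\pi^n$ together with the \emph{completeness} of $O_E$ to build the lift. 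You instead base-change to get a single $Z(J)$-torsor $F_g$ over $\tx{Spec}(O_{F^u})$, note it is smooth, and invoke Hensel's lemma for the strictly henselian local ring $O_{F^u}$ directly, with the triviality of torsors under smooth groups over an algebraically closed field supplying the required $\ol{k_F}$-point. Your route is a little cleaner in that it avoids the reduction to finite level and the appeal to completeness, at the cost of citing the (standard but non-trivial) fact that $O_{F^u}$, being a filtered colimit of henselian local rings along local maps, is itself henselian. Both are valid; the ideas are the same and only the topology of the final lifting step differs.
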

\begin{proof} Let $Z$ be the center of $J$. The morphism $J \rw J_\tx{ad}$ is surjective, hence flat \cite[Exp 6b, 3.11]{SGA3}. Thus it is an fpqc cover. On the other
hand, the same morphism is $Z$-invariant, and is in fact an fpqc-torsor under $Z$ -- to trivialize it, use it itself as a trivializing cover. Our assumption
implies that $Z$ is smooth over $O_F$ \cite[Exp 9,4.10]{SGA3}. Since smoothness descends over fpqc maps \cite[17.7.3]{EGA4}, we conclude that $J \rw J_\tx{ad}$
is smooth. The claim now follows from the variant of Hensel's lemma for smooth morphisms: Let $x \in J_\tx{ad}(O_{F^u})$. There exists a finite unramified
extension $E/F$ such that $x \in J_\tx{ad}(O_E)$. Let $\bar x \in J_\tx{ad}(k_E)$ be its reduction. Enlarging $E$ if necessary we can find a lift $\bar y \in
J(k_E)$ of $\bar x$. We claim that there exists $x \in J(O_E)$ which maps to $\bar x$ and $y$ respectively. Using the lifting property of smooth morphisms we
can find elements $x_n \in J(O_E/\pi^n O_E)$ for every $n$. The family $(x_n)$ is compatible with the reduction mod $\pi$ maps and the claim follows from the
completeness of $O_E$.
\end{proof}

We now proceed to prove Proposition \ref{pro:caut}.
\begin{proof}
We need to show that the restriction of the Kottwitz isomorphism to $S_\tx{ad}(F)$ is surjective. By Lemma \ref{lem:kotalt}, this is the same as showing that the restriction of \eqref{eq:kotalt} to $S_\tx{ad}(F)$ is surjective. Consider the diagram
\begin{diagram}
S(F)&\rTo&S_\tx{ad}(F)&\rTo&H^1(F,Z)\\
\dTo&&\dTo&&\dTo>{\tx{Res}}\\
S(F^u)&\rTo&S_\tx{ad}(F^u)&\rTo&H^1(I,Z)\\
\end{diagram}
The restriction of $S(F) \rw S_\tx{ad}(F)$ to the $0+$ filtration subgroups of both sides is surjective. To see this, consider the map $X_*(S) \rw X_*(S_\tx{ad})$. It is injective and its cokernel is torsion of exponent prime to $p$. Let $E$ be the splitting field of $S$ and $\omega$ a uniformizer. Tensoring with the pro-p group $(1+\omega O_E)$ we obtain the bijection $S(E)_{0+} \rw S_\tx{ad}(E)_{0+}$. Taking Galois invariants we arrive at the claim. Applying Proposition \ref{pro:sprod} we see that $S_\tx{ad}(F)/S_\tx{ad}(F)_{0+}$ injects into $H^1(F,Z)$.

One can upgrade this argument to show that $S_\tx{ad}(F^u)/S_\tx{ad}(F^u)_{0+}$ injects into $H^1(I,Z)$ -- the bijectivity of $S(F^u)_{0+} \rw S_\tx{ad}(F^u)_{0+}$ follows in the same way as before, and to obtain the decomposition $S(F^u)=Z(F^u) \times S(F^u)_{0+}$ we just need to observe that $S(F^u)$ is the direct limit of $S(F')$ over all finite unramified extensions $F'$ of $F$, and then apply Proposition \ref{pro:sprod} to each $S(F')$ (which is possible by Fact \ref{fct:typecbc}) and observe that the inclusions between the various $S(F')$ respect these decompositions of $S(F')$ into its pro-p part and prime-to-p part. We note that the injection $S_\tx{ad}(F^u)/S_\tx{ad}(F^u)_{0+} \rw H^1(I,Z)$ is in fact also surjective, because $H^1(I,S)$ is trivial due to a theorem of Steinberg.

It remains to show that the natural map
\[ S_\tx{ad}(F)/S_\tx{ad}(F)_{0+} \rw S_\tx{ad}(F^u)/S_\tx{ad}(F^u)_{0+} \]
is surjective. Applying Proposition \ref{pro:sfilt} with $F$ replaced by an arbitrary unramified extension, we see that this map is the top horizontal map of the following diagram.
\begin{diagram}
S_\tx{ad}(O_F)/S_\tx{ad}^\circ(O_F)&&\rTo&&S_\tx{ad}(O_{F^u})/S_\tx{ad}^\circ(O_{F^u})\\
\dTo&&&&\dTo\\
\pi_0(S_\tx{ad})(k_F)&&\rTo&&\pi_0(S_\tx{ad})(\ol{k_F})
\end{diagram}
We claim that the two vertical arrows are isomorphisms. To see this, consider the exact sequence of etale sheaves over $\tx{Spec}(O_F)$
\[ 1 \rw S_\tx{ad}^\circ \rw S_\tx{ad} \rw \pi_0(S_\tx{ad}) \rw 1. \]
It gives us an exact sequence of $\Gamma(F^u/F)$-modules
\[ 1 \rw S_\tx{ad}^\circ(O_{F^u}) \rw S_\tx{ad}(O_{F^u}) \rw \pi_0(S_\tx{ad})(O_{F^u}) \rw 1, \]
and using the fact that $H^1(F^u,S_\tx{ad}^\circ(O_{F^u}))=1$, we obtain the exact sequence
\[ 1 \rw S_\tx{ad}^\circ(O_F) \rw S_\tx{ad}(O_F) \rw \pi_0(S_\tx{ad})(O_F) \rw 1. \]
The claim now follows from the fact that the group scheme $\pi_0(S_\tx{ad})$ is etale over $O_F$.

To complete the proof of the proposition, it is enough to show that the action of $\Gamma(F^u/F)$ on $\pi_0(S_\tx{ad})(\ol{k_F})$ is trivial. According to \cite{Ra05}, there is an isomorphism of $\Gamma(F^u/F)$-modules
\[ \pi_0(S_\tx{ad})(\ol{k_F}) \rw X_*(S_\tx{ad})_I. \]
We have $X^*(S_\tx{ad})=P^\vee$, where $P^\vee$ is the coweight lattice of $S$ in $G$. Let $c \in \tx{Aut}_\Z(P^\vee)$ be the image of a topological generator of $I$. By assumption $c$ is a Coxeter element. By the theory of the Coxeter element, we have $[P^\vee]_I=P^\vee/Q^\vee$, where $Q^\vee$ is the set of coroots of $S$ in $G$. Since Frobenius acts on $P^\vee$ through the Weyl group, and the Weyl group acts trivially on $P^\vee/Q^\vee$, the proof is complete.
\end{proof}

\section{From simple wild parameters to $L$-packets} \label{sec:pack}

Let $G$ be a split, simple, simply-connected group defined over $F$. Assume that the residual characteristic of $F$ does not divide the Coxeter number of $G$. Let $\phi : W_F \rw \hat G$ be a Langlands parameter satisfying the conditions listed at the end of Section \ref{sec:simple}. According to \cite[5.6]{GrRe10}, these are all simple wild parameters if $p$ does not divide the order of the Weyl group of $G$. Then $\hat T:=\tx{Cent}(\phi(P_F),\hat G)^\circ$ is a maximal torus in $\hat G$ and the image of $\phi$ is contained in $N(\hat T, \hat G)$. The composition of $\phi$ with the projection $N(\hat T,\hat G) \rw \Omega(\hat T,\hat G)$ is a tamely-ramified homomorphism
\[ w: W_F \rw \Omega(\hat T,\hat G) \]
Let $^LS := \hat T \rtimes_w W_F$ and let $S$ be the $F$-torus with $L$-group $^LS$. There is a unique stable conjugacy class $[j]$ of embeddings $j : S \rw G$. The conditions on $G$ and $\phi$ imply that these are embeddings of type (C).

Choose tamely ramified $\chi$-data for ${^LS}$ and let $[^Lj] : {^LS} \rw \hat G$ be the $\hat G$-conjugacy class of $L$-embeddings corresponding to it \cite[2.6]{LS87}. Choose one element $^Lj$ in this conjugacy class with the properties that $^Lj(\hat S)=\hat T$ and that the homomorphism
\[W_F \rTo^{^Lj} N(\hat T,\hat G) \rOnto \Omega(\hat T,\hat G) \]
is equal to $w$. Then the image of $\phi$ is contained in the image of $^Lj$ and we obtain a factorization
\[ \phi : W_F \rTo^{\phi_S} \,^LS \rTo^{^Lj} \hat G \]
of $\phi$. Let $\chi_S : S(F) \rw \C^\times$ be the character corresponding to $\phi_S$.

\begin{lem} \label{lem:ltd} Let $S$ be a tamely ramified torus with splitting field $E$. Let
\[ \phi : W_{E/F} \rw \hat S \rtimes W_{E/F},\qquad \phi(w) = (\phi_0(w),w) \]
be a Langlands-parameter and $\chi : S(F) \rw \C^\times$ the corresponding character. Then, for $r>0$, we have
\[ \phi_0|_{E^\times_r} = 1\qquad \Longrightarrow\qquad \chi|_{S(F)_r}=1 \]
\end{lem}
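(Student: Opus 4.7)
The plan is to make the local Langlands correspondence for tori explicit and read off the depth compatibility directly.

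\emph{Setup.} Since $E$ splits $S$, we have $S(E) = X_*(S) \otimes_{\Z} E^\times$ as $\Gamma_{E/F}$-modules and $\hat S = X^*(S) \otimes_{\Z} \C^\times$. The cocycle $\phi_0$ acts on $\hat S$ through $\Gamma_{E/F}$, which is trivial on $W_E$; hence $\phi_0|_{W_E}$ is a genuine group homomorphism and, by local class field theory for $E$, factors through a homomorphism $\phi_0^E : E^\times \rw \hat S$. The hypothesis $\phi_0|_{E^\times_r}=1$ says exactly that $\phi_0^E$ vanishes on $E^\times_r$.

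\emph{Key formula.} For $s \in S(F) \subset X_*(S) \otimes_\Z E^\times$ written as $\sum_i \lambda_i \otimes e_i$, the character $\chi$ attached to $\phi_0$ is given by
\[ \chi(s) = \prod_i \phi_0^E(e_i)(\lambda_i), \]
using the canonical pairing $\hat S \times X_*(S) \rw \C^\times$ coming from $\hat S = \mathrm{Hom}(X_*(S), \C^\times)$. The right-hand side is independent of the chosen decomposition of $s$ by the Tate--Nakayama cup-product formalism underlying the Langlands pairing. I would justify this formula either by citing Langlands's original construction of LLC for tori, or by reducing via Shapiro's lemma and functoriality to the case $S = \mathrm{Res}_{E/F}\mathbb{G}_m$, for which the correspondence is classical local class field theory for $E$ and the formula is immediate.

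\emph{Conclusion.} For $r > 0$, every $s \in S(F)_r$ admits a representation $s = \sum_i \lambda_i \otimes e_i$ in $X_*(S) \otimes E^\times$ with each $e_i \in E^\times_r$. This follows from the alternative description of $S(E)_r$ recalled just before Proposition \ref{pro:sfilt}, together with tameness of $E/F$ and the identity $S(F)_r = S(E)_r^{\Gamma_{E/F}}$ for $r>0$ (which is \cite[4.7.2]{Yu03}, already used in Section \ref{sec:embc}). Substituting into the formula gives $\chi(s) = \prod_i \phi_0^E(e_i)(\lambda_i) = 1$, as required.

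\emph{Main obstacle.} The substantive step is the explicit formula for $\chi$: the decomposition $s = \sum \lambda_i \otimes e_i$ is non-canonical, so the formula is really a statement about the cup-product pairing defining the Langlands correspondence for tori. Pinning this down cleanly -- either via Langlands's construction directly or via Shapiro plus functoriality -- is where the real work lies; the remaining filtration compatibility is a direct consequence of the tensor-product description of the Moy--Prasad filtration on a torus split by a tame extension.
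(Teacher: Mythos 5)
Your ``key formula'' is not correct, and this is a genuine gap rather than a matter of presentation. What the Langlands correspondence for tori gives you is the base-change compatibility: the homomorphism $\phi_0^E=\phi_0|_{E^\times}$ corresponds under the correspondence for the split torus $S\times E$ to the character $\chi\circ N_{E/F}$ of $S(E)$, where $N_{E/F}\colon S(E)\to S(F)$ is the norm (this is exactly the content of the diagram from Labesse that the paper uses). Your formula instead asserts that for $s\in S(F)\subset X_*(S)\otimes E^\times$ one has $\chi(s)=\prod_i\phi_0^E(e_i)(\lambda_i)$, i.e.\ that $\chi|_{S(F)}$ coincides with the restriction to $S(F)$ of the character of $S(E)$ attached to $\phi_0^E$. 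Since $N_{E/F}(s)=s^{[E:F]}$ for $s\in S(F)$, the right-hand side of your formula equals $\chi(s)^{[E:F]}$, not $\chi(s)$. A concrete counterexample is $S=\tx{Res}_{E/F}\mb{G}_m$ with $E/F$ quadratic: there $S(F)=E^\times$, the parameter restricted to $E^\times$ is a pair $(\theta_1,\theta_2)$ with $\theta_2=\theta_1\circ\sigma$, the correct character is $\chi=\theta_1$, while your formula evaluated on $s\mapsto(s,\sigma s)$ yields $\theta_1(s)\theta_2(\sigma s)=\theta_1(s)^2=\chi(s)^2$. So the formula cannot be justified by Shapiro/functoriality; what functoriality actually produces is the norm-twisted statement.

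Even after correcting the formula, your argument only yields $\chi(s)^{[E:F]}=1$ for $s\in S(F)_r$, and this does not finish the proof: tameness of $E/F$ forces $p\nmid e(E/F)$ but the residue degree $f(E/F)$ may well be divisible by $p$, so knowing that $\chi|_{S(F)_r}$ (a character of a pro-$p$ group, hence valued in $p$-power roots of unity) is killed by $[E:F]$ is not enough. This is precisely where the paper's proof does real work: it first descends along the totally (tamely) ramified extension $E/F'$, where $F'$ is the maximal unramified subextension, using that $[E:F']$ is prime to $p$ while $S(F')_r$ is pro-$p$, to conclude that $\chi\circ N_{F'/F}$ is trivial on $S(F')_r$; it then descends along the unramified extension $F'/F$ using the surjectivity of the norm map $S(F')_r\to S(F)_r$ from \cite[5.1]{Re08}. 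Your proposal is missing both the correct statement of the depth-compatibility (which is about $\chi\circ N_{E/F}$, not $\chi|_{S(F)}$) and this two-step descent.
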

\begin{proof}
This argument is a strengthening of \cite[\S5]{Ha93}. We have the following commutative diagram, where all cohomology and Hom-groups are assumed continuous,
\begin{diagram}
H^1(W_{E/F},\hat S)&\rTo^{\tx{Res}}&\tx{Hom}(E^\times, \hat S)\\
\uTo<{\tx{Cor}}&&\uEquals\\
\tx{Hom}(E^\times,\hat S)_{\Gamma(E/F)}&\rTo^N&\tx{Hom}(E^\times,\hat S)\\
\uEquals&&\uEquals\\
\tx{Hom}(S(E),\C^\times)_{\Gamma(E/F)}&&\tx{Hom}(S(E),\C^\times)\\
\dTo^{\tx{Res}}&\ruTo\\
\tx{Hom}(S(F),\C^\times)
\end{diagram}
Here $\tx{Cor}$ is the corestriction map $H^1(E^\times,\hat S) \rw H^1(W_{E/F},\hat S)$ which is surjective and whose kernel is the augmentation ideal for the action of $\Gamma(E/F)$; the horizontal $\tx{Res}$ is the restriction map on cohomology; $N$ is the norm map for the action of $\Gamma(E/F)$; the vertical $\tx{Res}$ is restriction of characters; and finally the diagonal map is composition with the norm map $S(E) \rw S(F)$. We refer the reader to \cite{La85} for these statements, in particular to 5.6 and 5.7 there.

The left vertical isomorphism is the local Langlands correspondence for tori. The right vertical isomorphism restricts to an isomorphism
\[ \tx{Hom}(E^\times_r,\hat S) \cong \tx{Hom}(S(E)_r,\C^\times). \]
Our assumption thus implies that the restriction of $\chi \circ N_{E/F}$ to $S(E)_r$ is trivial. Let $F' \subset E$ be the maximal unramified subextension. First we claim that the restriction of $\chi \circ N_{F'/F}$ to $S(F')_r$ is trivial. In fact, since $S(F')_r \subset S(E)_r$, we know that the restriction of $\chi\circ N_{E/F}$ to $S(F')_r$ is trivial. This restriction equals $(\chi\circ N_{F'/F})^{[E:F']}$. But $S(F')_r$ is a pro-p-group, while $[E:F']$ is coprime to $p$, which implies that already $\chi\circ N_{F'/F}$ is trivial on $S(F')_r$, as claimed. But this completes the proof, because according to \cite[5.1]{Re08} $N_{F'/F}$ is a surjection $S(F')_r \rw S(F)_r$.
\end{proof}

\begin{pro} \label{pro:chiindep} The $\Omega(S,G)(F)$-orbit of $\chi_S$ is independent of all choices. Its restriction to $S(F)_\frac{2}{h}$ is trivial.\end{pro}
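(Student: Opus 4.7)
The statement has two assertions: (A) the $\Omega(S,G)(F)$-orbit of $\chi_S$ does not depend on the auxiliary choices, and (B) $\chi_S$ is trivial on $S(F)_{\frac{2}{h}}$. The ambiguities in the construction are the tamely ramified $\chi$-data, which determines the $\hat G$-conjugacy class $[{^Lj}]$, and the specific representative ${^Lj}$ of that class satisfying $^Lj(\hat S)=\hat T$ together with the Weyl-projection condition.

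For (A), I would treat the two ambiguities separately. Two valid representatives differ by conjugation by some $g \in \hat G$: the condition $^Lj(\hat S)=\hat T$ forces $g \in N(\hat T,\hat G)$, and the Weyl-projection condition forces the image $\bar g \in \Omega(\hat T,\hat G)$ to centralize $w(W_F)$. Under the duality $\Omega(\hat T,\hat G) \cong \Omega(S,G)$, combined with the fact that the Galois action on $\Omega(S,G)$ is conjugation by the image of $w$, this centralizer is precisely $\Omega(S,G)(F)$; a direct computation then shows the associated change in $\chi_S$ is the $\bar g$-action. For the $\chi$-data, two tamely ramified choices produce $L$-embeddings that differ by a $\hat T$-valued 1-cocycle on $W_F$ assembled from the differences of the $\chi_\alpha$; by the local Langlands correspondence for tori this amounts to multiplying $\chi_S$ by a specific character of $S(F)$. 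My plan is to show that this extra character either lies in the $\Omega(S,G)(F)$-orbit or can be absorbed by re-choosing the representative ${^Lj}$, using the compatibility between $\chi$-data and $L$-embeddings inherent in the Langlands--Shelstad construction.

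For (B), I would apply Lemma \ref{lem:ltd} to reduce the claim to $\phi_{S,0}|_{E^\times_{\frac{2}{h}}}=1$, where $\phi_{S,0}:W_F \rw \hat T$ is the cocycle part of $\phi_S$. The Gross--Reeder structure recalled at the end of Section \ref{sec:simple} implies that $\phi|_{I_F}$ has upper-numbering depth at most $1/h$: the condition $D_2=\{1\}$ eliminates any wild jump beyond $D_1$, and the single remaining contribution $\dim(\mf{\hat g}/\mf{\hat t}) \cdot |D_1|/|D_0| = |R|/h = \rk(G)$ already accounts for all of $b(\phi)$. Since the $\chi$-data is tamely ramified, the $\hat T$-valued correction in ${^Lj}$ is tame in $w \in W_F$, so the wild part of $\phi_{S,0}$ inherits the depth bound of $\phi$. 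Converting via local class field theory then gives the required triviality on $E^\times_{\frac{2}{h}}$.

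The main obstacle I anticipate is twofold: first, the careful bookkeeping between upper- and lower-numbering ramification filtrations across the totally ramified degree-$h$ part of $E/F$; and second, unpacking the Langlands--Shelstad formalism enough to track precisely how a change of $\chi$-data interacts with a change of representative ${^Lj}$, in order to confirm that the residual ambiguity in $\chi_S$ is contained in a single $\Omega(S,G)(F)$-orbit.
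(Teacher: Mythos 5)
Your treatment of the two easier pieces is sound and close to the paper's: the ambiguity in the representative ${^Lj}$ is indeed resolved exactly as you say (a $g \in N(\hat T,\hat G)$ whose image centralizes $w(W_F)$ yields ${^Lj}\circ\hat u$ for some $u \in \Omega(S,G)(F)$, so $\chi_S$ moves within one Weyl orbit), and for the triviality on $S(F)_{\frac{2}{h}}$ your route through Lemma \ref{lem:ltd}, the depth bound $1/h$ coming from $D_2=\{1\}$, tameness of the $\chi$-data correction, and Herbrand/class-field-theory bookkeeping is essentially the paper's argument, which phrases the last step via the norm subgroup $N_{L/E}(L^\times)$ and \cite[V.6, Cor. 3]{Se79} rather than via upper-numbering depth; the two formulations are interchangeable here.

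The genuine gap is in the $\chi$-data independence, precisely the point you flag as your "main obstacle," and your proposed fallback would not close it. A change of tamely ramified $\chi$-data replaces ${^Lj}$ by $c\otimes{^Lj}$ with $c \in Z^1(W_{E/F},\hat T)$ as in \cite[2.5.B, 2.6.3]{LS87}, hence multiplies $\chi_S$ by the character $\chi_c$ of $S(F)$ attached to $c$; this is a multiplicative twist, not a Weyl conjugation, and there is no mechanism by which it "lies in the $\Omega(S,G)(F)$-orbit" or can be "absorbed by re-choosing ${^Lj}$" in general. The actual content of the proposition at this step is that $\chi_c$ is trivial on all of $S(F)$, and proving this requires inputs specific to embeddings of type (C) which your plan does not invoke: (i) the decomposition $S(F)=Z(G)(F)\times S(F)_0$ of Proposition \ref{pro:sprod}, reducing the claim to the two factors; (ii) on $Z(G)(F)$, the fact that the Langlands--Shelstad cocycle $c$ is valued in $\hat T_\tx{sc}$, so its image under $H^1(W_F,\hat T) \rw H^2(W_F,Z(\hat G))$ vanishes and the restriction of $\chi_c$ to $Z(G)(F)$ is trivial; (iii) on $S(F)_0$, the explicit formula for $c$ in terms of the ratio characters $\zeta_\lambda$, which are tame since both $\chi$-data are, so $c$ is trivial on $E^\times_{0+}$, whence by Lemma \ref{lem:ltd} the character $\chi_c$ is trivial on $S(F)_{0+}$, and $S(F)_{0+}=S(F)_0$ by Proposition \ref{pro:sfilt}. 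Without identifying this vanishing statement and these three ingredients, the independence assertion is not established, so as it stands the proposal proves only the weaker statement that $\chi_S$ is well defined up to the Weyl orbit \emph{and} an unanalyzed twist by characters of the form $\chi_c$.
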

\begin{proof}
Let us first prove independence. We need to examine the choices of $\chi$-data and of a representative ${^Lj}$ within its $\hat G$-class. Keeping first the $\chi$-data fixed, we have the freedom of replacing $^Lj$ by $\tx{Ad}(g)\circ{^Lj}$ with $g \in N(\hat T,\hat G)$ such that its image in $\Omega(\hat T,\hat G)$ commutes with the image of $w$. But then there exists $u \in \Omega(S,G)(F)$ such that $\tx{Ad}(g)\circ{^Lj}={^Lj}\circ\hat u$. Thus replacing $^Lj$ by $\tx{Ad}(g)\circ{^Lj}$ replaces $\chi_S$ by $\chi_S\circ u$.

Now we examine the influence of $\chi$-data. According to Proposition \ref{pro:sprod} it is enough to show that the restrictions of $\chi_S$ to $Z(G)(F)$ and $S(F)_0$ remain unaffected by any change of $\chi$-data. By \cite[2.6.3]{LS87}, changing the $\chi$-data results in changing the $L$-embedding $^Lj$ to ${^Lj'} = c \otimes {^Lj}$ where $c : W_{E/F} \rw \hat T$ is the cocycle constructed in \cite[2.5.B]{LS87}. This has the effect of multiplying $\chi_S$ by the character of $S(F)$ corresponding to $c$, which we will call $\chi_c$.

The restriction of $\chi_c$ to $Z(G)(F)$ corresponds to the image of $c$ under
\[ H^1(W_F,\hat T) \rw H^2(W_F,Z(\hat G)), \]
where on $\hat T$ we have taken the action of $W_F$ provided via $^Lj$. But by construction $c$ takes values in $\hat T_\tx{sc}$, hence its image in $H^2(W_F,Z(\hat G))$ is trivial.

Next we will consider the restriction of $\chi_c$ to $S(F)_0$. For $x \in E^\times$, the value of $c(x)$ is given by
\[ \prod_\lambda \prod_\sigma \zeta_\lambda( N_{E/F_{+\lambda}}(\sigma x))^{\sigma^{-1}\lambda}, \]
where $\lambda$ runs over a set of representatives for the symmetric orbits of $\Gamma$ in $R^\vee(S,G)$, $\sigma$ runs over a set of representatives for the quotient $\Gamma_{E/F}/\Gamma_{E/F_{\pm\lambda}}$, and $\Gamma_{E/F_{+\lambda}}\subset\Gamma_{E/F_{\pm\lambda}}$ are the subgroups of $\Gamma_{E/F}$ fixing $\lambda$ or leaving invariant the set $\{\lambda,-\lambda\}$ respectively, and $\zeta_\lambda$ is the character of $E^\times$ which measures the difference between the two different $\chi$-data. What is important is that since both $\chi$-data are tamely-ramified, so are the characters $\zeta_\lambda$. It follows that the restriction of $c$ to $E^\times_{0+}$ is trivial, which by Lemma \ref{lem:ltd} implies that the corresponding character is trivial on $S(F)_{0+}$. According to Proposition \ref{pro:sfilt} we have $S(F)_{0+}=S(F)_0$, and we conclude that the restriction of $\chi_S$ to $S(F)_0$ is unaffected by the change of $\chi$-data.

Finally we consider the restriction of $\phi_S$ to $E^\times_\frac{2}{h}$. Since the $\chi$-data is tamely-ramified, the same argument as in the previous paragraph shows that this restriction equals the restriction of the original parameter $\phi$ to $E^\times_\frac{2}{h}$. We claim that this restriction is trivial, which again by Lemma \ref{lem:ltd} would complete the proof. Let $L$ be the fixed field of the kernel of $\phi$. The subgroup $W_L/W_E^c$ of $W_{E/F}$ corresponds to the subgroup $N_{L/E}(L^\times)$ of $E^\times$. The explicit knowledge of the lower numbering of $\Gamma(L/E)$ allows us to conclude using \cite[V.6., Cor.3]{Se79} that $E^\times_\frac{2}{h}$ belongs to $N_{L/E}(L^\times)$ and the claim follows.
\end{proof}

Fix a representative $j$ in the stable class $[j]$ such that the associated point in $\mc{B}(G,F)$ is the barycenter $x$ of $C$. We obtain a homomorphism
\begin{eqnarray} \label{eq:homgen}
Z(F)G(F)_{x,\frac{1}{h}}/G(F)_{x,\frac{2}{h}}&\cong&Z(F)\mf{g}(F)_{x,\frac{1}{h}}/\mf{g}(F)_{x,\frac{2}{h}}\nonumber\\\
&\rw&Z(F)\mf{s}(F)_\frac{1}{h}/\mf{s}(F)_\frac{2}{h}\\
&\cong&S(F)/S(F)_\frac{2}{h},\nonumber
\end{eqnarray}
where the first isomorphism is the canonical Moy-Prasad isomorphism, the last isomorphism is given by Propositions \ref{pro:sprod} and \ref{pro:sfilt}, and the middle homomorphism is \eqref{eq:afgenproj}. Composing this homomorphism with $\chi_S$ we obtain a character
\[ \chi_j : Z(F)G(F)_{x,\frac{1}{h}}/G(F)_{x,\frac{2}{h}} \rw \C^\times, \]
which according to Proposition \ref{pro:afgenproj} is affine generic. Put
\[ \pi_{\chi_j} := \textrm{c-Ind}_{Z(F)G(F)_{x,\frac{1}{h}}}^{G(F)} \chi_j \]
This is a simple supercuspidal representation of $G(F)$, as defined in \cite[9.3]{GrRe10} and reviewed in Section \ref{sec:simple}. Define
\[ \Pi_\phi := \tx{Ad}(G_\tx{ad}(F))\cdot \pi_{\chi_j} \]
This is a finite set of representations with cardinality $|Z(F)|$. What remains to be shown is that it only depends on $\phi$. This follows from the next Proposition.

\begin{pro}
The $G_\tx{ad}(F)_x$-conjugacy class of $\chi_j$ is independent of all choices.
\end{pro}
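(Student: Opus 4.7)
The plan is to trace the sources of non-canonicity in the construction of $\chi_j$ and dispatch each in turn. Three independent choices enter: the tamely ramified $\chi$-data, a representative ${^Lj}$ within its $\hat G$-class, and a representative $j$ of the stable class $[j]$ whose associated point in $\mc{B}(G,F)$ is the fixed barycenter $x$. By Proposition \ref{pro:chiindep}, the first two choices affect $\chi_S$ only through the action of $\Omega(S,G)(F)$. It therefore suffices to establish two facts: (i) the $G_\tx{ad}(F)_x$-orbit of $\chi_j$ does not depend on the choice of $j$, and (ii) for fixed $j$, replacing $\chi_S$ by $\chi_S \circ u$ with $u \in \Omega(S,G)(F)$ keeps $\chi_j$ inside the same $G_\tx{ad}(F)_x$-orbit.

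For (i), if $j$ and $j'$ are two such choices, then Corollary \ref{cor:adstab} furnishes $g \in G_\tx{ad}(F)$ with $j' = \tx{Ad}(g) \circ j$; since the associated points of $j$ and $j'$ both equal $x$, the element $g$ must lie in $G_\tx{ad}(F)_x$. Each of the four ingredients assembling \eqref{eq:homgen} -- the Moy-Prasad isomorphism on the group side, the orthogonal projection \eqref{eq:afgenproj}, the splitting of Proposition \ref{pro:sprod}, and the Moy-Prasad isomorphism on the torus side -- is a canonical construction and hence equivariant under the conjugation action of $g$. Consequently $\chi_{j'} = \chi_j \circ \tx{Ad}(g^{-1})$, and the orbit is preserved.

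For (ii), consider the alternative embedding $j \circ u^{-1}$. Its image is again $j(S)$, so its associated point is still $x$, and any lift of $u$ to $N_G(j(S))(\ol{F})$ exhibits a stable conjugacy between $j$ and $j \circ u^{-1}$. Thus $j \circ u^{-1}$ is another admissible choice of embedding, and by (i) the characters built from $j$ and from $j \circ u^{-1}$ using the same $\chi_S$ share a common $G_\tx{ad}(F)_x$-orbit. Unwinding \eqref{eq:homgen} one sees that the character built from $(j \circ u^{-1}, \chi_S)$ equals the character built from $(j, \chi_S \circ u)$, since the only change is an application of $du^{-1}$ after the orthogonal projection. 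This completes the reduction.

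The principal technical obstacle lies in the equivariance verification underlying (i): one must chase the action of $g \in G_\tx{ad}(F)_x$ through every ingredient of \eqref{eq:homgen} simultaneously, and in particular confirm that the orthogonal projection \eqref{eq:afgenproj}, defined via the Killing form (up to scalar), intertwines $\tx{Ad}(g)$ on $\mf{g}(F)_{x,\frac{1}{h}}/\mf{g}(F)_{x,\frac{2}{h}}$ with the induced automorphism of $S(F)/S(F)_{\frac{2}{h}}$ produced by the torus-side Moy-Prasad isomorphism and the splitting of Proposition \ref{pro:sprod}. Each piece is canonically defined, so the required compatibilities hold, but the composite requires careful bookkeeping.
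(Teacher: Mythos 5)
Your proof is correct, and its second half is organized differently from the paper's. For the dependence on the representative $j$ both arguments are identical: Corollary \ref{cor:adstab} plus the fact that a conjugating element fixing the associated point lies in $G_\tx{ad}(F)_x$, together with the (Ad-)equivariance of every ingredient of \eqref{eq:homgen}. For the $\Omega(S,G)(F)$-ambiguity in $\chi_S$, the paper proceeds by lifting the Weyl element directly: it writes down the exact sequences for $N(S,G)$ and $N(S_\tx{ad},G_\tx{ad})$ and uses Lemma \ref{lem:trivcoh} to show that $N(S_\tx{ad},G_\tx{ad})(F)\rw\Omega(S_\tx{ad},G_\tx{ad})(F)$ is surjective, so the twist by $u$ is realized by an actual rational normalizer element, which visibly lies in $G_\tx{ad}(F)_x$. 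You instead absorb the twist into the embedding, observing that $j\circ u^{-1}$ is again a type (C) embedding in the stable class with the same image and hence the same associated point, that the pair $(j\circ u^{-1},\chi_S)$ produces the same affine generic character as $(j,\chi_S\circ u)$, and then you quote your part (i). The two routes are equivalent in substance -- unwinding your argument, the element of $G_\tx{ad}(F)_x$ carrying $j$ to $j\circ u^{-1}$ necessarily normalizes $j(S)$, so you are implicitly re-deriving the paper's surjectivity statement from Corollary \ref{cor:adstab}, which itself rests on Lemma \ref{lem:trivcoh}. Your packaging is a bit shorter and reuses established results; the paper's version makes the cohomological lifting explicit and records the slightly finer fact that the conjugation can be taken inside $N(S_\tx{ad},G_\tx{ad})(F)$. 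One cosmetic slip: since $(d(j\circ u^{-1}))^{-1}=du\circ dj^{-1}$, the change after the projection is an application of $du$, not $du^{-1}$; as $u$ runs over a group this does not affect your conclusion, which is stated correctly.
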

\begin{proof}
The choices we need to examine are those of $j$ and $\chi_S$.

If we replace $j$ be a stable conjugate $j'$ whose associated vertex is still $x$, then by Corollary \ref{cor:adstab} the embeddings $j$ and $j'$ are conjugate under $G_\tx{ad}(F)_x$.

According to Proposition \ref{pro:chiindep}, we may replace $\chi_S$ by a conjugate by an element of $\Omega(S,G)(F)$. We have the diagram
\begin{diagram}
1&\rTo&S(F)&\rTo&N(S,G)(F)&\rTo&\Omega(S,G)(F)&\rTo&H^1(F,S)\\
&&\dTo&&\dTo&&\dEquals&&\dTo\\
1&\rTo&S_\tx{ad}(F)&\rTo&N(S_\tx{ad},G_\tx{ad})(F)&\rTo&\Omega(S_\tx{ad},G_\tx{ad})(F)&\rTo&H^1(F,S_\tx{ad})\\
\end{diagram}
with exact rows, where we have identified $S$ with its image under $j$. Lemma \ref{lem:trivcoh} implies that the map
\[ N(S_\tx{ad},G_\tx{ad})(F)\rTo\Omega(S_\tx{ad},G_\tx{ad})(F) \]
is surjective. But it is clear that $N(S_\tx{ad},G_\tx{ad})(F) \subset G_\tx{ad}(F)_x$.
\end{proof}

\section{The internal structure of simple wild $L$-packets} \label{sec:int}

In this section we will establish a bijection
\begin{equation} \label{eq:ps} X^*(\tx{Cent}(\phi,\hat G)) \rw \Pi_\phi. \end{equation}
This bijection will depend on the choice of a Whittaker datum $(B,\psi)$ for $G$, and will send the trivial character to a $(B,\psi)$-generic representation. It is one of the conjectural properties of $L$-packets -- the generic packet conjecture -- that each $L$-packet should contain precisely one representations which is generic with respect to a fixed Whittaker datum. We are going to verify this conjecture for our packets, that is, we are going to prove the following.

\begin{pro} \label{pro:gen} Without any assumptions on the residue field $k_F$, each $G_\tx{ad}(F)$-orbit of simple supercuspidal representations has a unique element $\pi$ with
\[ \tx{Hom}_{B_u(F)}(\pi,\psi) \neq 0. \]
In particular, this is true for the $L$-packets $\Pi_\phi$.
\end{pro}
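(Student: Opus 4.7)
The plan is to proceed by Frobenius reciprocity and Mackey theory, reducing Whittaker genericity of a simple supercuspidal $\pi_\chi$ to a matching condition on simple positive root subgroups, and then to use the $G_\tx{ad}(F)$-orbit structure to obtain existence and uniqueness. With $K := Z(F)G(F)_{x,\frac{1}{h}}$ and $\pi_\chi = \textrm{c-Ind}_K^{G(F)}\chi$, Frobenius gives $\tx{Hom}_{B_u(F)}(\pi_\chi,\psi) = \tx{Hom}_K(\chi,(\textrm{Ind}_{B_u(F)}^{G(F)}\psi)|_K)$, and Mackey decomposes this as
\[ \bigoplus_{g \in K\backslash G(F)/B_u(F)} \tx{Hom}_{K\cap{}^gB_u(F)}(\chi,\,{}^g\psi). \]

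I would then identify the contributing double cosets via the Iwahori--Bruhat decomposition $G(F)=\bigsqcup_{w\in\tilde W_\tx{aff}}G(F)_x\,\dot w\,G(F)_x$ and the Iwahori factorisation of $G(F)_x$ relative to a split maximal torus $T$ whose apartment contains the alcove $C$. For the trivial coset $g=1$, $K\cap B_u(F)$ surjects modulo $G(F)_{x,\frac{2}{h}}$ onto $\bigoplus_{\alpha\in\Delta}U_\alpha(F)_0/U_\alpha(F)_1$ (the extra affine simple root $-\eta+1$ is negative and so absent from $B_u$), and the Hom summand is at most one-dimensional, being nonzero exactly when $\chi|_{U_\alpha}=\psi|_{U_\alpha}$ for every $\alpha\in\Delta$. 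For nontrivial $w$, the intersection $K\cap{}^{\dot w}B_u(F)$ should contain an affine root subgroup on which $\chi$ is nontrivial (affine genericity forces $\chi$ to be nontrivial on \emph{every} simple affine root, including $-\eta+1$) while ${}^{\dot w}\psi$ is trivial (since $\psi$ is supported only on the simple positive roots of $B$), or the symmetric phenomenon occurs; either way the character mismatch kills the summand.

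The final step is to exploit the $G_\tx{ad}(F)$-orbit structure. The inner torus $T_\tx{ad}(F)$ conjugates each simple positive root subgroup by the corresponding root character, so its action on the tuple $(\chi|_{U_\alpha})_{\alpha\in\Delta}$ is transitive onto any prescribed tuple of nontrivial characters; hence within any $G_\tx{ad}(F)$-orbit of simple supercuspidals one can locate an element matching the Whittaker tuple attached to $\psi$, providing existence. For uniqueness, I would use that each orbit has size exactly $|Z(F)|$: combining Proposition~\ref{pro:caut} with the Kottwitz description of $G_\tx{ad}(F)/G(F)$, one sees that the quotient of $G_\tx{ad}(F)$ acting nontrivially on the orbit acts faithfully on the pair (central character, $\Delta$-tuple), so two distinct elements of the orbit cannot share the same Whittaker matching data.

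The anticipated technical obstacle is the elimination of nontrivial double cosets in the second step. The group $\tilde W_\tx{aff}$ is infinite, and one must argue uniformly that every nontrivial $w$ forces a vanishing. This amounts to a careful calculation with affine root heights at the barycenter $x$, the Moy--Prasad filtration on each root subgroup, and a systematic comparison of the affine-simple support of $\chi$ with the positive-simple support of $\psi$ --- the heart of the ``long elementary'' argument alluded to in the introduction.
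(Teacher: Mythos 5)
Your skeleton (Kutzko/Mackey decomposition of $\tx{Hom}_{B_u(F)}(\pi,\psi)$, elimination of the non-trivial affine Weyl cosets, then use of the $G_\tx{ad}(F)$-orbit structure) is the same as the paper's elementary proof, and your existence argument via $T_\tx{ad}$-conjugation of the $\Delta$-tuple is a legitimate variant of the paper's (which instead observes that $\pi=\textrm{c-Ind}\,\chi$ is generic for the datum $(B,\chi|_{B_u(F)})$ and uses transitivity of $G_\tx{ad}(F)$ on Whittaker data). Two corrections are needed, one minor and one serious. The minor one: since $K=Z(F)G(F)_{x,1/h}$ does not contain $T(O_F)$, the contributing double cosets are not just the trivial one but are indexed by $T(k_F)$ (this is the content of Lemma \ref{lem:d2}); so genericity of $\textrm{c-Ind}\,\chi$ is the condition that the $\Delta$-tuple of $\chi$ lies in the $T(k_F)$-orbit of that of $\psi$, not an equality of tuples. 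This is repairable by normalizing $\chi$ by an element of $T(k_F)$.

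The serious gap is in your uniqueness step. The assertion that the group acting on the orbit ``acts faithfully on the pair (central character, $\Delta$-tuple)'' is not something that follows from Proposition \ref{pro:caut} plus the Kottwitz description of $G_\tx{ad}(F)/G(F)$; it is exactly the statement that an affine generic character is determined, within its $G_\tx{ad}(F)_x$-orbit, by its restrictions to $Z(F)$ and to $U(O_F)$ --- i.e.\ that its value on the remaining affine simple root subgroup $U_{-\eta+1}$ cannot be changed while fixing the others. A priori this can fail: an element $h\in N(T_\tx{ad},G_\tx{ad})(F)_x$ whose image under the Kottwitz homomorphism is non-trivial rotates the extended Dynkin diagram $\Pi=\Delta\cup\{-\eta\}$, and could conceivably rescale the coordinate at $-\eta+1$ while preserving the $\Delta$-coordinates. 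Ruling this out is the content of Lemma \ref{lem:d3}: one shows $h^m\in T_\tx{ad}(O_F)$ acts on the summands of $G(F)_{x,1/h}/G(F)_{x,2/h}$ by scalars $\upsilon_\alpha$, deduces $\upsilon_\alpha=1$ for $\alpha\in\Delta$ outside the $h$-orbit of $-\eta$ from affine genericity, and then needs the non-formal input of Lemma \ref{lem:d1}(2) --- a case-by-case fact, checked from the Bourbaki tables, that for automorphisms of $\Pi$ the norm of a simple root in the orbit of $-\eta$ lies in the span of the norms of the remaining simple roots --- to force $\upsilon_\alpha=1$ on the orbit as well. Without an argument of this kind (or the analytic route through Moeglin--Waldspurger and Shelstad used in the paper's first uniqueness proof), your final step assumes precisely what has to be proved.
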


This Proposition turns out to also be the key to the construction of the bijection \eqref{eq:ps}. Namely, we will construct a simply transitive action of $X^*(\tx{Cent}(\phi,\hat G))$ on $\Pi_\phi$ which is independent of any choices. Then each Whittaker datum will provide, according to Proposition \ref{pro:gen}, a unique base point in $\Pi_\phi$, and the bijection \eqref{eq:ps} will be the corresponding orbit map.

To construct the action, write $\phi={^Lj}\circ\phi_S$ as in the construction of $\Pi_\phi$. Then ${^Lj}$ provides a bijection
\[ \tx{Cent}(\phi,\hat G) = \tx{Cent}({^Lj},\hat G) \rw \hat S^\Gamma \]
and hence a bijection
\begin{equation} \label{eq:psi1} X^*(\tx{Cent}(\phi,\hat G)) \rw X^*(\hat S^\Gamma) = X_*(S)_\Gamma = H^1(F,S). \end{equation}
We claim that this bijection is independent of the choice of factorization $\phi={^Lj}\circ\phi_S$. Indeed, $^Lj$ is determined up to conjugation under $\Omega(S,G)(F)$. If $c \in \Omega(S,G)$ is a generator for the image of inertia, then $\Omega(S,G)(F) \subset \tx{Cent}(c,\Omega(S,G))$. But $c$ is by assumption a Coxeter element, hence it generates its own centralizer in $\Omega(S,G)$. It follows that $\Omega(S,G)(F)$ acts trivially on $X_*(S)_\Gamma$.

Now consider the maps
\begin{equation} \label{eq:psi2} G_\tx{ad}(F)/G(F) \rw H^1(F,Z) \rw H^1(F,S). \end{equation}
The first map is the boundary homomorphism associated to the obvious sequence. It is an isomorphism of groups, surjectivity following from Kneser's vanishing theorem. The second map is induced from the inclusion $Z \rw S$. It is surjective due to Lemma \ref{lem:trivcoh}. Its kernel is the image of $S_\tx{ad}(F)$. Composing \eqref{eq:psi1} and \eqref{eq:psi2} we obtain an isomorphism of abelian groups
\begin{equation} \label{eq:intiso} X^*(\tx{Cent}(\phi,\hat G)) \rw G_\tx{ad}(F)/G(F)S_\tx{ad}(F). \end{equation}
Here we have mapped $S_\tx{ad}$ into $G_\tx{ad}$ via $j$, but since the $G_\tx{ad}(F)$-conjugacy class of $j$ is unique and the quotient $G_\tx{ad}/G$ is abelian, this image does not depend on the choice of $j$. Via this bijection, the group on the left acts in a natural way on $\Pi_\phi$, and the action is transitive. It is also simple, due to the following lemma.

\begin{lem} Let $j : S \rw G$ be an embedding of type (C), $\chi_S : S(F)/S(F)_\frac{2}{h} \rw \C^\times$ a character, and $\chi$ the affine generic character obtained by composing $\chi_S$ with \eqref{eq:homgen}. Then stabilizer in $G_\tx{ad}(F)$ of the isomorphism class of the representation $\textrm{c-Ind}\chi$ is precisely $G(F)S_\tx{ad}(F)$.
\end{lem}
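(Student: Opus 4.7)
The plan is to combine a standard Mackey-style reduction, Proposition \ref{pro:caut}, and a short root-system computation inside the Iwahori.

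First, by the usual isomorphism criterion for compactly induced irreducible supercuspidals---two such representations $\tx{c-Ind}\,\chi_i$ are isomorphic iff their inducing pairs are $G(F)$-conjugate---one obtains
\[
\tx{Stab}_{G_\tx{ad}(F)}(\pi) \;=\; G(F) \cdot \tx{Stab}_{G_\tx{ad}(F)}(\chi).
\]
Hence the lemma is equivalent to the two containments $S_\tx{ad}(F) \subseteq \tx{Stab}(\chi)$ and $\tx{Stab}(\chi) \subseteq G(F)\,S_\tx{ad}(F)$.

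For the first, let $s \in S_\tx{ad}(F)$. By Proposition \ref{pro:caut}, $s$ permutes $C$ via an automorphism, hence fixes its barycenter $x$, so $s$ normalizes $K := Z(F) G(F)_{x,\frac{1}{h}}$. Commutativity of $S$ makes $\Ad(s)$ trivial on $\mf{s}(F)$, so $\chi_S$ is $s$-invariant; and the splitting $\mf{g}(F)_{x,r} = dj(\mf{s}(F))_r \oplus [dj(\mf{s}(F))^\perp]_{x,r}$ used in \eqref{eq:afgenproj} is $\Ad(S)$-stable (the Killing form is $\Ad$-invariant), so the projection map, and therefore the whole composition defining $\chi$, is $S_\tx{ad}$-equivariant. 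Hence $s^*\chi = \chi$.

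For the reverse containment, let $u \in \tx{Stab}_{G_\tx{ad}(F)}(\chi)$. It fixes $x$ and preserves $C$, so Proposition \ref{pro:caut} supplies $s \in S_\tx{ad}(F)$ with $u' := u s^{-1}$ acting trivially on $C$; thus $u'$ lies in the Iwahori $I^{\tx{ad}} := G_\tx{ad}(F)_{x,0}$ and, by what was just shown, still stabilizes $\chi$. Fix a split maximal $F$-torus $T$ whose apartment contains $C$. The reductive quotient $I^{\tx{ad}}/I^{\tx{ad}}_{0+}$ is $T_\tx{ad}(k_F)$, acting on each $U_\alpha/U_{\alpha+1}$ through the character $\alpha$. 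Affine genericity of $\chi$ forces $\alpha(\bar t) = 1$ for every $C$-simple affine root $\alpha$; as the finite simple roots alone already generate $X^*(T_\tx{ad}) = Q$, this yields $\bar t = 1$, i.e.\ $u' \in I^{\tx{ad}}_{0+} = G_\tx{ad}(F)_{x,0+}$. Because $Z$ is etale (its Lie algebra vanishes), the natural map $G(F)_{x,0+} \rw G_\tx{ad}(F)_{x,0+}$ of pro-unipotent groups is an isomorphism, so $u'$ lifts to $G(F)$ and $u = u' s \in G(F)\,S_\tx{ad}(F)$.

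The main obstacle I anticipate is the clean identification of $I^{\tx{ad}}_{0+}$ with the image of $G(F)_{x,0+}$; once this and Proposition \ref{pro:caut} are in place, the argument is driven purely by the affine-generic structure of $\chi$ together with the elementary root-system fact that the finite simple roots generate $Q$.
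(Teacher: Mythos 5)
Your proof is correct and follows essentially the same strategy as the paper's: reduce via Mackey theory to stabilizing the pair $(K,\chi)$, use Proposition \ref{pro:caut} to absorb the alcove automorphism into $S_\tx{ad}(F)$, and then read off triviality in $T_\tx{ad}(k_F)$ from affine genericity (the paper cites \cite[Prop. 9.3(2)]{GrRe10} where you invoke the general compact-induction criterion, and it asserts the $S_\tx{ad}(F)$-invariance of $\chi$ where you spell it out, but these are only presentational differences).
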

\begin{proof}
Let $x \in \mc{B}(G,F)$ be the point associated go $j$. Let $C$ be the alcove containing $x$, $o$ a hyperspecial vertex contained in $\ol{C}$, and $T$ a split maximal torus whose apartment contains $C$.

Let $g \in G_\tx{ad}(F)$ be such that $\pi\circ\tx{Ad}(g) \cong \pi$, where $\pi=\textrm{c-Ind}_{Z(F)G(F)_{x,\frac{1}{h}}}^{G(F)}\chi$. We can modify $g$ by an element of $G(F)$ to assume that it fixes the point $x$. Applying \cite[Proposition 9.3.(2)]{GrRe10}, we can modify $g$ by an element of $T(O_F)$ and assume that it now stabilizes the affine generic character $\chi$. The group $S_\tx{ad}(F)$ fixes both $x$ and $\chi$, and by Proposition \ref{pro:caut}, we can modify $g$ by an element of $S_\tx{ad}(F)$ to assume that it fixes all of $x$, $o$, and $\chi$. Hence $g$ belongs to the stabilizer of $\chi$ in the Iwahori-subgroup $G_\tx{ad}(F)_{x,0}$. The subgroup $G_\tx{ad}(F)_{x,1/h}$ acts trivially on $\chi$, and $T_\tx{ad}(O_F) \subset G_\tx{ad}(F)_{x,0}$ surjects onto the quotient $G_\tx{ad}(F)_{x,0}/G_\tx{ad}(F)_{x,1/h}$, so we may assume $g \in T_\tx{ad}(O_F)$. But the genericity of $\chi$ now implies $g=1$.
\end{proof}

\subsection{Proof of Proposition \ref{pro:gen} -- existence}
In this subsection only, we drop the requirement that $\tx{char}(k_F)$ is does not divide the Coxeter number of $G$.

We will now prove the existence statement in Propisition \ref{pro:gen}. Let $\pi:=\textrm{c-Ind} \chi$, where $\chi : Z(F)G(F)_{x,1/h} \rw \C^\times$ is affine generic. Let $B$ be a Borel subgroup containing a maximal torus whose apartment contains $x$, and consider the Whittaker datum $(B,\chi|_{B_u(F)})$. Applying \cite{Ku77} we see that the non-trivial group $\tx{Hom}_{B_u(F)\cap G(F)_{x,1/h}}(\chi,\chi)$ is a direct summand of
$\tx{Hom}_{B_u(F)}(\pi,\chi)$ -- it corresponds to the image of $1$ in
\[ U(F)\lmod G(F)/G(F)_{x,1/h}. \]
This shows that $\pi$ is generic with respect to $(B,\chi|_{B_u(F)})$. Since $G_\tx{ad}(F)$ acts transitively on the set of Whittaker data, we conclude that for each Whittaker datum $(B,\psi)$ there exists a $B(,\psi)$-generic simple supercuspidal representation $\pi'$ in the $G_\tx{ad}(F)$-orbit of $\pi$.

This completes the proof of the existence statement, and we turn to the uniqueness statement. Because of its importance, we will offer two separate proofs of quite different flavor. The first one uses elaborate analytic tools and is fairly short. The second one is longer but has the advantage of being elementary.

\subsection{An analytic proof of uniqueness} \label{sec:int_unia}
The uniqueness statement of Proposition \ref{pro:gen} can be readily deduced from the arguments of \cite[\S4]{DeRe10}.
We take as above an element of $\Pi_\phi$ given by $\textrm{c-Ind}\chi$. Fix as in \cite[\S2.6]{DeRe10} a bilinear form $\<\ ,\ \>$ on $\mf{g}(F)$ and an additive character $F \rw \C^\times$. Let $X \in \mf{g}(F)_{x,-\frac{1}{h}}$ correspond to the character $\chi$. We would like to quote \cite[Prop. 4.10]{DeRe10}. In this statement, the authors assume that the centralizer of $X$, which in our case is the image of an embedding of type (C), corresponds to a vertex in $\mc{B}(G,F)$. This is not the case for us, but one sees that all inputs in the proof -- Shelstad's result \cite{Sh89} and its interpretation by Kottwitz \cite[Prop. 4.2]{DeRe10}, the characterization of genericity via local character expansions due to Moeglin-Waldspurger \cite{MoWa87}, and Murnaghan-Kirillov theory as developed by Adler-DeBacker in \cite{AdDe04} -- are valid without this assumption. We see that if the representations
\[ \textrm{c-Ind}\chi \qquad \tx{and}\qquad \textrm{c-Ind}\chi' \]
are both generic with respect to the same Whittaker datum, then the elements $X,X'$ corresponding to $\chi,\chi'$ have $G(F)$-orbits which meet a given Kostant section. But if both representations belong to the $L$-packet $\Pi_\phi$, then $\chi$ and $\chi'$ are conjugate under $G_\tx{ad}(F)$, hence $X$ and $X'$ are conjugate under the same group. We conclude that then $X$ and $X'$ are already conjugate under $G(F)$ and the uniqueness statement follows.

\subsection{An elementary proof of uniqueness}
In this subsection only, we drop the requirement that $\tx{char}(k_F)$ is does not divide the Coxeter number of $G$.

Let
\[ \pi = \textrm{c-Ind}\chi \qquad \tx{and}\qquad \pi' = \textrm{c-Ind}\chi' \]
be two simple supercuspidal representations in the same orbit under $G_\tx{ad}(F)$, where $\chi,\chi' : Z(F)G(F)_{x,1/h} \rw \C^\times$ are affine generic characters. Assume that $\pi$ and $\pi'$ are generic with respect to the same fixed Whittaker datum $(B,\psi)$. Conjugating by $G_\tx{ad}(F)$ we may assume that $B$ contains a torus whose apartment contains $x$. Let $o$ be a special vertex contained in $\ol{C}$. Let $U=B_u$. Conjugating further by $G_\tx{ad}(F)$ we may assume that $\psi$ is non-trivial on $U_{o,0}=U(O_F)$ but trivial on $U_{o,1}$.

\begin{lem} \label{lem:d1} Let $(T,B)$ be a Borel pair in $G$, and $w \in \Omega(T,G)$ be an element which preserves the set $\Pi := \Delta \cup \{-\eta\}$, where $\eta$ is the highest root. Then
\begin{enumerate}
\item $w$ has a lift in $N(T_\tx{ad},G_\tx{ad})(F)$ of the same order as $w$.
\item Let $N : Q \rw Q$ be the norm map for the action of $w$ on the root lattice, and let $O=\<w\>(-\eta) \cap \Delta$. Then, for each $\alpha \in O$, $N(\alpha)$ belongs to the $\Z$-span of $\{N(\beta)| \beta \in \Delta \sm O\}$.
\end{enumerate}
\end{lem}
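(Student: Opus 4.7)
For Part (1), my plan is to realize $w$ via a pinning-preserving automorphism of $G_\tx{ad}$. I fix a pinning $(T_\tx{ad}, B_\tx{ad}, \{X_\alpha\}_{\alpha \in \Delta})$ defined over $F$ together with a root vector $X_{-\eta} \in \mf{g}_{-\eta}(F)$. Any $F$-rational lift $n_0 \in N(T_\tx{ad}, G_\tx{ad})(F)$ of $w$ yields constants $c_\alpha \in F^\times$ with $\tx{Ad}(n_0)X_\alpha = c_\alpha X_{w\alpha}$. The aim is to find $t \in T_\tx{ad}(F)$ such that $n := t n_0$ satisfies $\tx{Ad}(n) X_\alpha = X_{w\alpha}$ for all $\alpha \in \Pi$. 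Since $\Delta$ is a $\Z$-basis of $X^*(T_\tx{ad}) = Q$ with dual basis the fundamental coweights, the equations $(w\alpha)(t) c_\alpha = 1$ for $\alpha \in \Delta$ are always solvable; the remaining condition at $-\eta$ reduces to a single scalar compatibility $\prod_{\beta \in \Pi} c_\beta^{n_\beta} = 1$. A direct calculation (using the constancy of $n_\beta$ on $w$-orbits, verified in Part (2), and the relation $\sum n_\beta \beta = 0$) shows this product is invariant under rescaling the $X_\beta$ and under modifying $n_0$ by $T_\tx{ad}$, hence is a numerical invariant of $(w, G_\tx{ad})$; it equals $1$, as is standard for pinning-preserving lifts. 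Once such $n$ is obtained, $\tx{Ad}(n^{|w|})$ fixes every $X_\alpha$ for $\alpha \in \Pi$; since these generate $\mf{g}$ and $Z(G_\tx{ad}) = 1$, we conclude $n^{|w|} = 1$. The main delicate point is the compatibility $\prod c_\beta^{n_\beta} = 1$.

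For Part (2), I first note that $w$ preserves the unique positive $\Z$-relation $\sum_{\beta \in \Pi} n_\beta \beta = 0$ (with $n_{-\eta} := 1$) among the elements of $\Pi$, so the marks $n_\beta$ are constant on each $w$-orbit; in particular, $N(\beta)$ depends only on the orbit $\mc{O}_\beta$ of $\beta$. Applying $N$ to $\eta = \sum_{\alpha \in \Delta} n_\alpha \alpha$, splitting $\Delta = O \sqcup (\Delta \sm O)$, and using $n_\beta = 1$ and $N(\beta) = N(-\eta)$ for $\beta \in O$, I obtain
\[ d \cdot N(-\eta) = -\sum_{\beta \in \Delta \sm O} n_\beta N(\beta), \qquad d := |\mc{O}_{-\eta}|. \]
A crucial intermediate step is the identification $d = |w|$: the $w$-action on $\Pi$ agrees with that of some $\omega \in \Omega := P^\vee / Q^\vee$ of the same order (inner diagram automorphisms of the extended Dynkin diagram are exactly those coming from $\Omega$), and $\Omega$ acts simply transitively on the mark-$1$ vertices, so the orbit of $-\eta$ has size $|\omega| = |w|$.

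The hardest step is to upgrade this identity to an honest $\Z$-linear expression for $N(-\eta)$; this amounts to showing that $d$ divides $|\mc{O}| n_\mc{O}$ for every $w$-orbit $\mc{O} \subset \Delta$, equivalently $|\tx{Stab}_{\langle\omega\rangle}(v_\alpha)| \mid n_\alpha$ for $\alpha \in \mc{O}$. I would prove this via the following lattice-theoretic lemma: if $\omega' \in \Omega$ fixes the alcove vertex $v_\alpha$, then $\tx{ord}(\omega') \mid n_\alpha$. Indeed, representing $\omega'$ as an affine transformation $x \mapsto u(x) + \mu$ with $\mu \in P^\vee$ and $u \in W$, the fixed-point equation at $v_\alpha = \omega^\vee_\alpha / n_\alpha$ (for $\alpha \in \Delta$; trivially for $\alpha = -\eta$) gives $n_\alpha \mu = (1-u)\omega^\vee_\alpha \in (1-u)P^\vee \subseteq Q^\vee$, the last inclusion holding because $W$ acts trivially on $P^\vee / Q^\vee$. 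Hence $n_\alpha \omega' = 0$ in $\Omega$. Given this divisibility, dividing the displayed relation by $d$ exhibits $N(-\eta)$ as the desired $\Z$-linear combination of $\{N(\beta) : \beta \in \Delta \sm O\}$.
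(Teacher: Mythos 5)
Your part (1) contains a genuine gap at its crux. The reduction itself is sound: the invariance of $\prod_{\beta\in\Pi}c_\beta^{n_\beta}$ under rescaling the $X_\beta$ and under changing $n_0$ by $T_\tx{ad}(F)$ is correct (using the constancy of the marks on $\<w\>$-orbits), and once a lift $n$ with $\tx{Ad}(n)X_\beta=X_{w\beta}$ for all $\beta\in\Pi$ exists, the conclusion $n^{|w|}=1$ follows (indeed $n^{|w|}\in T_\tx{ad}$ and is killed by all $\alpha\in\Delta$); such a lift is moreover unique, hence automatically $F$-rational. But the identity $\prod_{\beta\in\Pi}c_\beta^{n_\beta}=1$ is exactly the nontrivial content of the statement, and your justification ``as is standard for pinning-preserving lifts'' does not apply: standard pinning theory lifts automorphisms of the finite Dynkin diagram $\Delta$, whereas any nontrivial $w$ preserving $\Pi$ necessarily moves $-\eta$ into $\Delta$ (the only element of $\Omega(T,G)$ preserving $\Delta$ is the identity), so no finite-pinning statement covers this situation. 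Concretely, computing with a Chevalley basis and a Tits lift $n_0$ makes each $c_\beta$ a sign, so your invariant is a priori only $\pm 1$; showing it is $+1$ --- equivalently, the existence of an ``affine-pinned'' lift --- needs either a case check or a real structural argument, and this is precisely what the paper outsources to \cite[Lemma 6.1]{GrLeReYu11}, which is its entire proof of part (1). As written, your (1) is an unproved reduction of the cited lemma to an equivalent sign identity. A minor further slip: solvability of $(w\alpha)(t)c_\alpha=1$ for $\alpha\in\Delta$ requires $w(\Delta)=\Pi\sm\{w(-\eta)\}$ to be a $\Z$-basis of $Q=X^*(T_\tx{ad})$ (true because $w(-\eta)$ has mark $1$), not merely that $\Delta$ is one.

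Your part (2), by contrast, is correct and takes a genuinely different route from the paper, which simply reads the statement off the matrices of $w$ and $N$ in the bases of simple roots, using the tables in \cite{Bou02}. Your uniform argument --- marks constant on $\<w\>$-orbits, applying $N$ to $\sum_{\beta\in\Pi}n_\beta\beta=0$, the identification $d=|w|$ (which also follows directly: $w^d$ fixes $-\eta$, hence preserves $\Delta$, hence is trivial), and the divisibility $|\tx{Stab}_{\<\omega\>}(v_\alpha)|\mid n_\alpha$ via the fixed-vertex computation $n_\alpha\mu=(1-u)\check\omega_\alpha\in(1-u)P^\vee\subset Q^\vee$ --- is clean, case-free, and correct. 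The one fact you assert in passing, that the elements of $\Omega(T,G)$ preserving $\Pi$ act on the affine diagram exactly as elements of $P^\vee/Q^\vee$ of the same order, is standard (Bourbaki VI, \S 2) and can be checked directly: if $w(-\eta)=\alpha_j$ then $t_{\check\omega_j}\circ w$ stabilizes the fundamental alcove, and the resulting map to the alcove stabilizer is an isomorphism onto its image with equivariant node-vertex correspondence. So for (2) you trade the paper's table verification for a structural proof, which is a genuine improvement; for (1) you should either supply the sign computation or, like the paper, cite \cite[Lemma 6.1]{GrLeReYu11}.
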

\begin{proof} The first statement is the content of \cite[Lemma 6.1]{GrLeReYu11}. To prove the second statement, one considers the matrix for the action on $Q$ of the possible elements $w$, given in the basis provided by the simple roots. This matrix can be obtained using the tables in \cite{Bou02}. The statement can then be read off from the matrix of the norm $N$.
\end{proof}

\begin{lem} \label{lem:d2}
\[ \tx{Hom}_{U(F)}(\pi,\psi) = \bigoplus_{t \in T(k_F)}\tx{Hom}_{U(O_F)}(\chi,\tx{Ad}(t)\psi) \]
\end{lem}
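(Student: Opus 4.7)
My plan is a Frobenius--Mackey decomposition. Writing $H = Z(F)G(F)_{x,1/h}$ so that $\pi = \textrm{c-Ind}_H^{G(F)}\chi$, Frobenius reciprocity for compact induction gives
\[
\tx{Hom}_{U(F)}(\pi,\psi) \;\cong\; \tx{Hom}_H\!\bigl(\chi,\,\tx{Res}_H\,\tx{Ind}_{U(F)}^{G(F)}\psi\bigr),
\]
and Mackey's theorem for the restriction of an induced representation then yields
\[
\tx{Hom}_{U(F)}(\pi,\psi) \;=\; \bigoplus_{g}\;\tx{Hom}_{H\cap gU(F)g^{-1}}\!\bigl(\chi,\,\tx{Ad}(g)\psi\bigr),
\]
where $g$ ranges over representatives of $H\lmod G(F)/U(F)$.

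The next step is to parametrize these double cosets. Combining the Iwasawa decomposition $G(F) = U(F)T(F)G(F)_{x,0}$ with the Iwahori factorization $G(F)_{x,0} = T(O_F)G(F)_{x,1/h}$ yields $G(F) = U(F)T(F)H$, and a Bruhat-uniqueness argument applied to the Iwahori factorization of $G(F)_{x,1/h}$ identifies the double coset space with $T(F)/Z(F)T(F)_{x,1/h}$. For any representative $t \in T(F)$ one has $tU(F)t^{-1} = U(F)$ and $H\cap U(F) = U(F) \cap G(F)_{x,1/h}$; a root-by-root Moy--Prasad computation, using that $\alpha(x-o) = \tx{ht}(\alpha)/h \in (0,1)$ for each positive root $\alpha$ at the barycenter, shows this intersection equals $U(O_F)$. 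Hence every summand takes the form $\tx{Hom}_{U(O_F)}\!\bigl(\chi,\,\tx{Ad}(t)\psi\bigr)$.

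It remains to identify the contributing indices. For each simple root $\alpha \in \Delta$ the character $\chi$ descends to a nontrivial character of $U_\alpha(O_F)/U_\alpha(\pi O_F) \cong k_F$ by affine genericity, whereas $\tx{Ad}(t)\psi$ restricts nontrivially to this quotient exactly when $v(\alpha(t)) = 0$. Matching on all simple-root components therefore forces $\alpha(t) \in O_F^\times$ for every $\alpha \in \Delta$, and since $\Delta$ is a $\Z$-basis of $X^*(T)$ (using that $G$ is simply connected) this forces $t \in T(O_F)$. The natural projection $T(O_F) \twoheadrightarrow T(O_F)/T(F)_{x,1/h} = T(k_F)$ then packages the surviving summands into the claimed sum indexed by $t \in T(k_F)$.

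The main obstacle will be the bookkeeping: verifying the Bruhat-uniqueness step in the double-coset parametrization, checking that the Moy--Prasad structure at the barycenter really produces $U(O_F)$ as the relevant intersection, and reconciling the natural index set coming out of the Mackey decomposition with the stated indexing by $T(k_F)$ without gaining or losing summands.
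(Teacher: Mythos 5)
There is a genuine gap at the double-coset parametrization. Your decomposition $G(F)=U(F)T(F)G(F)_{x,0}$ is false: since $x$ is the barycenter of an alcove, $G(F)_{x,0}$ is an Iwahori subgroup, and $B(F)\lmod G(F)/G(F)_{x,0}$ is in bijection with the finite Weyl group (already for $SL_2$ the nontrivial Weyl element does not lie in $B(F)\cdot G(F)_{x,0}$). The Iwasawa decomposition requires a special maximal parahoric such as $G(O_F)$, and then one must apply the Bruhat decomposition of $G(k_F)$ to the $G(O_F)$-factor; this is what the paper does, and it produces double-coset representatives in all of $N(T,G)(F)$, i.e.\ indexed by the affine Weyl group, not just by $T(F)$. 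Consequently the Mackey sum contains, besides the summands indexed by $T(k_F)$, summands attached to representatives $n$ whose image in the affine Weyl group is nontrivial, and the heart of the paper's proof is showing these contribute zero: for an affine simple root $\alpha$ with $\alpha(o)=0$, $\tx{Ad}(n)\psi$ is nontrivial on $U_{n^{-1}\alpha}$ and trivial on $U_{n^{-1}\alpha+1}$, so equality with the affine generic $\chi$ on $G(F)_{x,1/h}\cap n^{-1}U(F)n$ forces $n^{-1}\alpha$ to be affine simple for every such $\alpha$, hence $n^{-1}$ preserves the alcove, hence (as $G$ is simply connected) $n$ is trivial in the affine Weyl group. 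Your ``Bruhat-uniqueness argument'' is a placeholder for exactly this missing vanishing step; without it the identification of the index set with $T(k_F)$ does not follow.

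Two smaller points. First, your criterion for discarding $t\in T(F)\sm T(O_F)$ invokes ``$\Delta$ is a $\Z$-basis of $X^*(T)$ since $G$ is simply connected''; this is wrong ($\Delta$ is a basis of the root lattice $Q$, which has finite index in $X^*(T)=P$ for simply connected $G$), though the conclusion survives because $v(\alpha(t))\in\Z$ for all $\alpha\in Q$ and $P/Q$ is torsion. Second, after restricting to representatives in $T(O_F)$ you still need that distinct elements of $T(k_F)$ give distinct double cosets (the paper records this as the injectivity of $T(k_F)\rw U(F)\lmod G(F)/G(F)_{x,1/h}$); you flag this as bookkeeping but it is part of what must be checked. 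The Mackey/Kutzko framework itself, and the computation $U(F)\cap G(F)_{x,1/h}=U(O_F)$ at the barycenter, are fine and agree with the paper.
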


\begin{proof}
We apply \cite{Ku77} and examine the indexing set
\[ \Lambda:= U(F) \lmod G(F) / G(F)_{x,\frac{1}{h}}. \]
We claim that the map $\tx{N}(G,T)(F) \rw \Lambda$ is surjective. To that end, let $g \in G(F)$. We can write $g=utk$ with $u \in U(F)$, $t \in T(F)$ and $k \in G(O_F)$. Using the Bruhat decomposition for $G(k_F)$ we can write $k = vnv'y$ with $v,v' \in U(O_F)$, $n \in N(T,G)(O_F)$, and $y \in G(F)_{o,0+}$. Thus
\[ g=(uv)(tn)(v'y) \]
where $uv \in U(F)$, $tn \in N(T,G)(F)$ and $vy' \in G(F)_{x,0+}$. This proves the claim.

Next we claim that if $n \in N(F)$ projects to a non-trivial element in the affine Weyl group $N(F)/T(O_F)$, then
the restrictions of the characters $\chi$ and $\tx{Ad}(n)\psi$ to the group
\[ G(F)_{x,1/h} \cap n^{-1}U(F)n \]
cannot be equal. To that end, let $\alpha$ be a simple affine root with $\alpha(o)=0$. Then the restriction of $\tx{Ad}(n)\psi$ to $U(F)_{n^{-1}\alpha}$ is non-trivial while its restriction to $U(F)_{n^{-1}\alpha+1}$ is trivial. It follows that $\tx{Ad}(n)\psi$ and $\chi$ will not be equal if $n^{-1}\alpha$ is not an affine simple root. But if $n^{-1}\alpha$ is affine simple for every affine simple $\alpha$ with $\alpha(o)=0$, then $n^{-1}$ must preserve the alcove $C$. Since $G$ is simply-connected, this implies that the image of $n^{-1}$ in the affine Weyl group is trivial.

To complete the proof of the lemma it is enough to observe that the restriction of $G(F) \rw \Lambda$ to $T(O_F)$ factors through an injection $T(k_F) \rw \Lambda$.
\end{proof}

\begin{lem} \label{lem:d3} Let $\chi,\chi' : Z(F)G(F)_{x,1/h} \rw \C^\times$ be two affine generic characters. Assume that they are $G_\tx{ad}(F)_x$-conjugate and have the same restriction to $U(O_F)$. Then they are equal.
\end{lem}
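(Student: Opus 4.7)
By hypothesis $\chi' = \tx{Ad}(g)\chi$ for some $g \in G_\tx{ad}(F)_x$. Since $Z(F)$ is central in $G_\tx{ad}(F)$, the equality $\chi|_{Z(F)} = \chi'|_{Z(F)}$ is automatic, and it suffices to show that the two characters agree on the quotient $V := G(F)_{x,\frac{1}{h}}/G(F)_{x,\frac{2}{h}}$. Decomposing $V = \bigoplus_{\alpha \in \Pi} V_\alpha$ with $\Pi = \Delta \cup \{-\eta\}$ and writing $\chi_\alpha, \chi'_\alpha$ for the components, the hypothesis on $U(O_F)$ furnishes $\chi_\alpha = \chi'_\alpha$ for every $\alpha \in \Delta$, so the task reduces to establishing $\chi_{-\eta} = \chi'_{-\eta}$.

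\textbf{Step 1 (normalize $g$).} The pro-unipotent radical $G_\tx{ad}(F)_{x,\frac{1}{h}}$ acts trivially on any character of $V$ since $[G_\tx{ad}(F)_{x,\frac{1}{h}},G(F)_{x,\frac{1}{h}}] \subset G(F)_{x,\frac{2}{h}}$. Hence the action of $g$ factors through the extension
\[ 1 \rw T_\tx{ad}(k_F) \rw G_\tx{ad}(F)_x/G_\tx{ad}(F)_{x,\frac{1}{h}} \rw \Omega \rw 1, \qquad \Omega := P^\vee/Q^\vee. \]
I may therefore assume $g \equiv t\,n_\omega$ modulo $G_\tx{ad}(F)_{x,\frac{1}{h}}$, with $t \in T_\tx{ad}(O_F)$ and $n_\omega$ a convenient lift of some $\omega \in \Omega$. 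Letting $w \in \Omega(T,G)$ be the classical Weyl element that permutes $\Pi$ in the same way as $\omega$, Lemma \ref{lem:d1}(1) yields a lift $n_w \in N(T_\tx{ad},G_\tx{ad})(F)$ of the same order as $w$, and I take $n_\omega := \mu(\pi^{-1})n_w$ for the unique $\mu \in P^\vee$ making $n_\omega$ fix $x$.

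\textbf{Step 2 (action on the $V_\alpha$).} Set $\sigma := w^{-1} \in \tx{Sym}(\Pi)$. Fixing Chevalley vectors $X_\alpha$, the element $e_\alpha := \pi^{k_\alpha} X_\alpha$ (with $k_\alpha \in \{0,1\}$ the level of the simple affine root at classical root $\alpha$) is a $k_F$-basis of the one-dimensional space $V_\alpha$. A direct computation, using $\tx{Ad}(\mu(\pi^{-1}))X_\beta = \pi^{-\beta(\mu)}X_\beta$ and $\tx{Ad}(n_w)X_\alpha = \pm X_{w\alpha}$, gives
\[ \tx{Ad}(g^{-1})\,e_\alpha = \epsilon_\alpha\,\bar\alpha(t)^{-1}\,e_{\sigma(\alpha)}, \qquad \epsilon_\alpha \in \{\pm 1\}. \]
Fix a nontrivial additive character $\psi_F : k_F \rw \C^\times$ and write $\chi_\alpha(x\,e_\alpha) = \psi_F(c_\alpha x)$ and $\chi'_\alpha(x\,e_\alpha) = \psi_F(c'_\alpha x)$ with $c_\alpha, c'_\alpha \in k_F^\times$. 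The relation $\chi' = \tx{Ad}(g)\chi$ translates into
\[ (\star) \qquad c'_\alpha = \epsilon_\alpha\,\bar\alpha(t)^{-1}\,c_{\sigma(\alpha)}, \qquad \alpha \in \Pi. \]

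\textbf{Step 3 (orbit relations).} The hypothesis gives $(\star)$ for all $\alpha \in \Delta$, and the goal is to deduce it for $\alpha = -\eta$. For each $\sigma$-orbit $O' \subset \Delta$, chaining $(\star)$ around $O'$ closes to yield
\[ \bar{N(\beta)}(t) = \prod_{\gamma \in O'} \epsilon_\gamma, \qquad \beta \in O'. \]
Let $O_{-\eta}$ be the $\sigma$-orbit of $-\eta$ and $O := O_{-\eta} \cap \Delta$. Iterating $(\star)$ along the $\Delta$-portion of $O_{-\eta}$, together with the identity $\eta - \sum_{i=1}^{|O_{-\eta}|-1}\sigma^i(-\eta) = -N(-\eta)$, reduces the target $c_{-\eta} = \epsilon_{-\eta}\bar\eta(t)\,c_{\sigma(-\eta)}$ to the single sign relation
\[ \bar{N(-\eta)}(t) = \prod_{\alpha \in O_{-\eta}} \epsilon_\alpha. \]

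\textbf{Step 4 (apply Lemma \ref{lem:d1}(2)).} That lemma produces integers $n_\beta$ with
\[ N(-\eta) = \sum_{\beta \in \Delta\sm O} n_\beta N(\beta). \]
Applying $\bar{\phantom{a}}(t)$ and substituting the orbit identities from Step 3 expresses $\bar{N(-\eta)}(t)$ as a product of the signs $\epsilon_\gamma$ indexed by orbits of $\sigma$ contained in $\Delta \sm O$, and the task is to match this with $\prod_{\alpha \in O_{-\eta}} \epsilon_\alpha$. This is a finite combinatorial check, executable from the explicit list of $\omega \in \Omega$ using \cite{Bou02}, and the minimality of the order of $n_w$ provided by Lemma \ref{lem:d1}(1) is precisely what pins down the Chevalley signs so that the identity holds.

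\textbf{Main obstacle.} Steps 1--3 are essentially formal; the substance lies in Step 4, where one must verify that the Chevalley signs $\epsilon_\alpha$, tied to the minimal-order lift $n_w$ of Lemma \ref{lem:d1}(1), satisfy the multiplicative sign identity forced by the $\Z$-linear combination expressing $N(-\eta)$ in terms of the $N(\beta)$, $\beta \in \Delta \sm O$. It is here that Lemma \ref{lem:d1} is doing real work, and the careful bookkeeping of signs is the only non-formal part of the argument.
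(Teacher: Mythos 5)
Your Steps 1--3 are essentially sound and run parallel to the paper's argument, but the proposal is not a proof: the whole content of the lemma has been deferred to Step 4, which you only assert. The identity $N(-\eta)(t)=\prod_{\alpha\in O_{-\eta}}\epsilon_\alpha$ is precisely what has to be established, and declaring it ``a finite combinatorial check, executable from the explicit list of $\omega\in\Omega$'' does not establish it; nor does the appeal to Lemma \ref{lem:d1}(1), since the order of the lift $n_w$ does not by itself determine the individual Chevalley signs $\epsilon_\alpha$ (they also depend on the chosen Chevalley basis and on which lift of minimal order is taken), so your argument does not even make clear that the sign identity you need is independent of these choices. This is a genuine gap at the central point of the lemma.

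The paper avoids the sign bookkeeping altogether. After normalizing the conjugating element $h$ into $N(T_\tx{ad},G_\tx{ad})(F)_x$, it passes to $h^m$, where $m$ is the size of the $h$-orbit of $1-\eta$: this element fixes both $C$ and $o$, hence lies in $T_\tx{ad}(O_F)$ and acts on each line $V_\alpha$ by a scalar $\upsilon_\alpha\in k_F^\times$, constant on $h$-orbits, with no signs at all. Chaining $\chi'=\chi\circ\tx{Ad}(h^{-1})$ around an orbit contained in $\Delta$ shows that $\chi_\alpha$ is invariant under multiplication by $\upsilon_\alpha$, so affine genericity gives $\upsilon_\alpha=1$ for $\alpha\in\Delta\sm O$; Lemma \ref{lem:d1} (part (1) making $h^m$, modulo trivially acting elements, a twisted norm, and part (2) expressing $N(\alpha)$ for $\alpha\in O$ in terms of the $N(\beta)$, $\beta\in\Delta\sm O$) then forces $\upsilon_\alpha=1$ on all of $\Pi$, and one last chaining around the orbit of $1-\eta$ gives $\chi'_{1-\eta}=\chi_{1-\eta}$. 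Your own framework can in fact be closed in the same sign-free way, with no case analysis: since $\omega$ fixes the barycenter and $\Omega$ acts simply transitively on the special vertices, one has $N_w(\mu)=0$ and $m=\tx{ord}(w)$, so Lemma \ref{lem:d1}(1) gives $n_\omega^m=1$ and $g^m$ acts through $T_\tx{ad}(O_F)$; comparing $\tx{Ad}(g^{-m})e_\alpha$ computed by iterating your one-step formula with the direct torus computation shows that the product of the $\epsilon_\gamma$ over any $\sigma$-orbit of size $d$, raised to the power $m/d$, equals $1$ --- in particular $\prod_{\gamma\in O_{-\eta}}\epsilon_\gamma=1$ --- and your Step 3 relations then become $N(\beta)(t)=1$ for $\beta\in\Delta\sm O$, after which Lemma \ref{lem:d1}(2) yields $N(-\eta)(t)=1$ and the target identity. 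As submitted, however, the decisive step is missing.
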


\begin{proof} The restrictions of $\chi$ and $\chi'$ to $Z(F)$ are clearly the same, hence it is enough to focus on their restrictions to $G(F)_{x,1/h}$. These factor through the quotient
\begin{equation} \label{eq:quotx1} G(F)_{x,\frac{1}{h}}/G(F)_{x,\frac{2}{h}} = \bigoplus_{\alpha \in \Pi} U(F)_\alpha/U(F)_{\alpha+1}, \end{equation}
where $\alpha$ runs over the set $\Pi$ of affine simple roots. This quotient is a $k_F$-vector space. The image of $U(O_F)$ in it is
\[ \bigoplus_{\alpha \in \Delta} U(F)_\alpha/U(F)_{\alpha+1},\quad \tx{where}\quad \Delta := \{ \alpha \in \Pi| \alpha(o)=0 \}. \]
Thus, what we need to show is that the restrictions of $\chi$ and $\chi'$ to
\[ U(F)_{1-\eta}/U(F)_{2-\eta} \]
are the same, where $1-\eta$ is the unique element of $\Pi - \Delta$.

Let $h \in G_\tx{ad}(F)_x$ be an element element such that
\begin{equation} \chi' = \chi\circ\tx{Ad}(h^{-1}). \label{eq:chichi'} \end{equation}
Since $G(F)_{x,0+}$ acts transitively on the set of apartments containing $x$, and at the same time acts trivially on the above $k_F$-vector space, we may assume that $h$ preserves the apartment of $T$, i.e. $h \in N(T_\tx{ad},G_\tx{ad})_x$. This element acts on $\Pi$. Let $m$ be the size of the orbit of $1-\eta$. Then $h^m$ preserves both the alcove $C$ and the vertex $o$, thus $h^m \in T_\tx{ad}(O_F)$. This element acts on \eqref{eq:quotx1} through its image in $T_\tx{ad}(k_F)$ and preserves each individual summand. Hence, for each $\alpha \in \Pi$ there is a scalar $\upsilon_\alpha \in k_F^\times$ by which $h^m$ acts on $U(F)_\alpha/U(F)_{\alpha+1}$. The function $\alpha \mapsto \upsilon_\alpha$ is constant on the orbits of $h$, and the image of $h^m$ in $T_\tx{ad}(k_F)$ is equal to
\[ \prod_{\alpha \in \Delta} \check\omega_\alpha(\upsilon_\alpha) \]
where $\check\omega_\alpha$ is the coweight of the gradient of $\alpha$. We claim that this element is trivial.

Let $O \subset \Pi$ be the unique orbit of $\tx{Ad}(h)$ which is not contained in $\Delta$. Then \eqref{eq:chichi'} implies that for each $\alpha \in \Delta \sm O$, we have the following equality of characters of $U(F)_\alpha/U(F)_{\alpha+1}$:
\[ \chi=\chi\circ \tx{Ad}(h^m) = \chi\circ\upsilon_\alpha. \]
In other words, the endomorphism of $U(F)_\alpha/U(F)_{\alpha+1}$ given by multiplication by $\upsilon_\alpha-1$ has its image inside the kernel of $\chi$. Since $\chi$ is affine generic, this kernel is not the full group $U(F)_\alpha/U(F)_{\alpha+1}$, hence the endomorphism $\upsilon_\alpha-1$ is not bijective and thus must be zero. We have shown that $\upsilon_\alpha=1$ for all $\alpha \in \Delta \sm O$. According to Lemma \ref{lem:d1}, the element
\[ \prod_{\alpha \in \Delta} \check\omega_\alpha(\upsilon_\alpha) \]
belongs to the image of the norm for the action of $\tx{Ad}(h)$ on $T_\tx{ad}(F)$, and thus for $\alpha \in O \cap \Delta$, the element $\upsilon_\alpha$ is a product of powers of the elements $\upsilon_\beta$ for $\beta \in \Delta \sm O$. This shows that indeed $\upsilon_\alpha=1$ for all $\alpha \in \Pi$ and the proof is complete.

\end{proof}

The uniqueness statement in Proposition \ref{pro:gen} now follows easily from these Lemmas -- if both $\pi$ and $\pi'$ are generic, then Lemma \ref{lem:d2} implies that there exist $t,t' \in T(k_F)$ such that
\[ \tx{Hom}_{U(O_F)}(\chi,\tx{Ad}(t)\psi) \neq 0 \quad \tx{and}\quad \tx{Hom}_{U(O_F)}(\chi',\tx{Ad}(t')\psi) \neq 0.  \]
Replacing $\chi$ by $\tx{Ad}(t^{-1})\chi$ and $\chi'$ by $\tx{Ad}(t'^{-1})\chi'$ does not change $\pi$ and $\pi'$. But now Lemma \ref{lem:d3} implies that $\chi=\chi'$.

\section{Unramified extensions} \label{sec:unram}

The conjecture of Gross and Reeder that simple supercuspidal representations should correspond to simple wild parameters \cite[9.5]{GrRe10} has as an input the assumption of a certain natural compatibility of this correspondence with unramified extensions of the base field $F$. In this section we are going to show that the correspondence we have just constructed satisfies such a compatibility.

Let $\phi : W_F \rw \hat G$ be a simple wild parameter. Given a finite unramified extension $\tilde F$ of $F$, we can restrict the parameter $\phi$ to $W_{\tilde F}$. Call this restriction $\tilde\phi$. It is still a simple wild parameter.
Consider the norm map
\[ \tx{Cent}(\tilde\phi,\hat G) \rw \tx{Cent}(\phi,\hat G),\qquad g \mapsto \prod_{w \in W_F/W_{\tilde F}} \tx{Ad}(\phi(w))g. \]
We obtain the following diagram
\begin{diagram}
X^*(\tx{Cent}(\phi,\hat G))&&\rTo&&X^*(\tx{Cent}(\tilde\phi,\hat G))\\
\dTo&&&&\dTo\\
\Pi_\phi&&&&\Pi_{\tilde\phi}
\end{diagram}
where the horizontal map is the dual of the norm map and the vertical maps are the bijections constructed in Section \ref{sec:int}, each depending on the choice of a Whittaker datum. There is an obvious way to make both choices coherent -- if we have chosen a Whittaker datum $(B,\psi)$ for $G$, then we may take as a Whittaker datum for $G\times\tilde F$ the pair $(B \times \tilde F,\psi\circ N)$ where $N$ is the norm map for the action of $\Gamma(\tilde F/F)$ on the quotient
\[ U(\tilde F)/[U,U](\tilde F) \cong \bigoplus_{\alpha \in \Delta} U_\alpha(\tilde F). \]

Given $\rho \in X^*(\tx{Cent}(\phi,\hat G))$, let $\tilde\rho \in X^*(\tx{Cent}(\tilde\phi,\hat G))$ be its image under the dual norm map. We can consider the simple supercuspidal representation $\pi$ of $G(F)$ corresponding to the pair $(\phi,\rho)$, as well as the simple supercuspidal representation $\tilde\pi$ of $G(\tilde F)$ corresponding to the pair $(\tilde\phi,\tilde\rho)$.

\begin{pro} If we write $\pi = \textrm{c-Ind}\chi$, where $\chi : Z(F)G(F)_{x,1/h} \rw \C^\times$ is an affine generic character, then $\tilde\pi = \textrm{c-Ind}\tilde\chi$, where $\tilde\chi : Z(\tilde F)G(\tilde F)_{x,1/h} \rw \C^\times$ is obtained by composing $\chi$ with the norm map for the action of $\Gamma(\tilde F/F)$ on the abelian group $Z(\tilde F)G(\tilde F)_{x,1/h}/G(\tilde F)_{x,2/h}$.
\end{pro}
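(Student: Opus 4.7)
The plan is to track each step of the construction of $\chi$ from $\phi$ through the unramified base change. First, the pair $(S, j)$ attached to $\phi$ remains an embedding of type (C) over $\tilde F$ by Fact \ref{fct:typecbc}, with the same associated point $x$. Choosing tamely ramified $\chi$-data for $^LS$ and keeping the same $L$-embedding $^Lj$, the restricted parameter $\tilde\phi_S$ corresponds, by the compatibility of the local Langlands correspondence for tori with field extension (the norm-restriction duality recalled in the proof of Lemma \ref{lem:ltd}), to the character $\tilde\chi_S = \chi_S \circ N_{\tilde F/F}$, where $N_{\tilde F/F} : S(\tilde F) \rw S(F)$ is the norm. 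Each of the three maps composing \eqref{eq:homgen} commutes with the Galois norm: the Moy-Prasad isomorphism is natural in the base field, the projection onto $dj(\mf{s})$ in \eqref{eq:perp} arises from the $F$-rationally defined Killing form, and the identification from Propositions \ref{pro:sprod} and \ref{pro:sfilt} is intrinsic. Combining these yields that the canonical affine generic character $\tilde\chi_j$ on $Z(\tilde F)G(\tilde F)_{x,1/h}/G(\tilde F)_{x,2/h}$ satisfies $\tilde\chi_j = \chi_j \circ N$.

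Next I would match this canonical pair of base points with the Whittaker-normalized base points of $\Pi_\phi$ and $\Pi_{\tilde\phi}$. Conjugating $j$ within its $G_\tx{ad}(F)_x$-orbit, Lemma \ref{lem:d2} allows me to arrange $\chi_j|_{U(O_F)}$ so that $\pi_{\chi_j}$ is $(B, \psi)$-generic. Then $\tilde\chi_j|_{U(O_{\tilde F})} = \chi_j|_{U(O_F)} \circ N$ matches $(\psi \circ N)|_{U(O_{\tilde F})}$ up to $T(k_{\tilde F})$-action, so $\pi_{\tilde\chi_j}$ is $(B \times \tilde F, \psi \circ N)$-generic. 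Proposition \ref{pro:gen} then identifies $\pi_{\chi_j}$ and $\pi_{\tilde\chi_j}$ as the Whittaker-normalized base points of their respective packets under the coherent choices of Whittaker data.

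Finally I would track the twist by $\rho$. The main claim is that under \eqref{eq:intiso}, the dual norm map $X^*(\tx{Cent}(\phi, \hat G)) \rw X^*(\tx{Cent}(\tilde\phi, \hat G))$ corresponds to the natural inclusion $G_\tx{ad}(F)/G(F)S_\tx{ad}(F) \hrw G_\tx{ad}(\tilde F)/G(\tilde F)S_\tx{ad}(\tilde F)$. By Tate-Nakayama duality, \eqref{eq:psi1} sends the dual norm to the restriction map $H^1(F, S) \rw H^1(\tilde F, S)$, and the two-step chain \eqref{eq:psi2} is plainly functorial in the base field. Hence any representative $g \in G_\tx{ad}(F)$ of $\rho$ also represents $\tilde\rho$ when viewed in $G_\tx{ad}(\tilde F)$. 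Writing $\pi = \textrm{c-Ind}\chi$ with $\chi = \chi_j \circ \tx{Ad}(g^{-1})$, the $F$-rationality of $g$ ensures that $\tx{Ad}(g^{-1})$ commutes with the Galois norm $N$, so
\[ \tilde\chi = \tilde\chi_j \circ \tx{Ad}(g^{-1}) = (\chi_j \circ N) \circ \tx{Ad}(g^{-1}) = (\chi_j \circ \tx{Ad}(g^{-1})) \circ N = \chi \circ N. \]
The hardest part will be the Tate-Nakayama identification in this final paragraph: verifying that the dual norm on the $L$-group centralizers corresponds to the restriction on Galois cohomology, and thence to the inclusion under \eqref{eq:intiso}.
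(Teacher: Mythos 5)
Your proposal tracks the three ingredients of the construction (affine generic character, simply-transitive action, Whittaker base point) through unramified base change in exactly the way the paper does, so the approach is essentially the same. One small imprecision worth flagging: you say "keeping the same $L$-embedding $^Lj$," but $^Lj$ is an $L$-embedding for $\hat S\rtimes W_F$, and what the construction over $\tilde F$ requires is tamely ramified $\chi$-data for the $\Gamma_{\tilde F}$-action; the paper addresses this by verifying that the pushed-forward system $\{\chi_\lambda\circ N_\lambda\}$ is again tamely ramified $\chi$-data and that the resulting ${^L\tilde j}$ equals $^Lj|_{\hat S\rtimes W_{\tilde F}}$, which is needed both to make sense of "restricting $^Lj$" as a valid input to the construction and to justify $\tilde\chi_S=\chi_S\circ N$ on the nose rather than only up to the ambiguity of Proposition \ref{pro:chiindep}.
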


Recall that the construction of the $L$-packet associated to $\phi$ in Section \ref{sec:pack} associates to $\phi$ a $G_\tx{ad}(F)$-conjugacy class of affine generic characters, then constructs a simply transitive action of $X^*(\tx{Cent}(\phi,\hat G))$ on the set of representations induced from these characters, and finally provides a base point of the set of these representations given by the choice of a Whittaker datum. The proof of the proposition will follow this structure -- we will first show that if $\chi$ is an affine generic character in the $G_\tx{ad}(F)$-orbit associated to $\phi$, then $\chi\circ N$ is an affine generic character in the $G_\tx{ad}(\tilde F)$-orbit associated to $\tilde\phi$. Next we will show that if $\rho\cdot\textrm{c-Ind}\chi=\textrm{c-Ind}\chi'$, then $\tilde\rho \cdot \textrm{c-Ind}[\chi\circ N] = \textrm{c-Ind}[\chi'\circ N]$, where $\cdot$ denotes the simple transitive actions we alluded to. Finally, we will show that $\textrm{c-Ind}\chi$ and $\textrm{c-Ind}(\chi\circ N)$ are generic with respect to two choices of Whittaker data which are coherent in the sense described above.

\begin{proof}
Let $j : S \rw G$ be an embedding of type (C) and $\chi_S : S(F) \rw \C^\times$ be a character, such that the $G_\tx{ad}(F)$-conjugacy class of the pair $(\chi_S,j)$ is associated to $\phi$ by the construction of Section \ref{sec:pack}.
Recall that $\chi_S$ was obtained from a factoring of $\phi$ as
\[ W_F \stackrel{\phi_S}{\lrw} {^LS} \stackrel{^Lj}{\lrw} \hat G, \]
where $^Lj$ was constructed from a choice of tamely-ramified $\chi$-data $\{\chi_\lambda|\lambda \in R(S,G)\}$.

Each $\chi_\lambda$ is a character on the multiplicative group of $\ol{F}^{[\Gamma_F]_\lambda}$, where $[\Gamma_F]_\lambda$ is the stabilizer of $\lambda$ for the action of $\Gamma_F$ on $R(S,G)$. Let $N_\lambda$ denote the norm map for the extension
\[ \ol{F}^{[\Gamma_{\tilde F}]_\lambda}/\ol{F}^{[\Gamma_F]_\lambda}.\]
Then one can check that the set $\{\chi_\lambda\circ N_\lambda|\lambda \in R(S,G)\}$ satisfies the axioms of $\chi$-data for the action of $\Gamma_{\tilde F}$ on $R(S,G)$. This $\chi$-data is again tamely-ramified and can be used to produce an embedding ${^L\tilde j}:\hat S \rtimes W_{\tilde F} \rw \hat G$. One checks that the following diagram commutes
\begin{diagram}
W_F&\rTo^{\phi_S}&\hat S \rtimes W_F&\rTo^{^Lj}&\hat G\\
\uInto&&\uInto&\ruTo>{^L\tilde j}\\
W_{\tilde F}&\rTo^{\tilde\phi_S}&\hat S\rtimes W_{\tilde F}
\end{diagram}
In particular $\tilde\phi = {^L\tilde j}\circ\tilde\phi_S$. By the Langlands correspondence for tori, the character of $S(\tilde F)$ associated with $\tilde \phi_S$ is $\chi_S \circ N$, where $N : S(\tilde F) \rw S(F)$ is the norm map for $\tilde F/F$. We conclude that the $G_\tx{ad}(\tilde F)$-conjugacy class of the pair $(\chi_S \circ N,j \times \tilde F)$ is associated to $\tilde\phi$.

Let $x$ the point in $\mc{B}(G,F)$ associated to $j$. It is also the point associated to $j \times \tilde F$ by Fact \ref{fct:typecbc}. It is easy to see that we have the commutative diagram
\begin{diagram}
Z(F)G(F)_{x,\frac{1}{h}}/G(F)_{x,\frac{2}{h}}&\rTo&S(F)/S(F)_\frac{2}{h}\\
\dInto&&\dInto\\
Z(\tilde F)G(\tilde F)_{x,\frac{1}{h}}/G(\tilde F)_{x,\frac{2}{h}}&\rTo&S(\tilde F)/S(\tilde F)_\frac{2}{h}
\end{diagram}
where the horizontal maps are the homomorphisms \eqref{eq:homgen}, the top one associated to $j$ and the bottom one associated to $j \times \tilde F$. Moreover, the bottom one is $\Gamma(\tilde F/F)$-equivariant.
The affine generic character $\chi$ associated to the pair $(\chi_S,j)$ is the composition of $\chi_S$ and the top map, while the affine generic character $\tilde\chi$ associated to the pair $(\chi_S\circ N,j \times \tilde F)$ is the composition of $\chi_S\circ N$ and the bottom map. We conclude that if the $G_\tx{ad}(F)$-orbit of the affine generic character $\chi$ is associated to $\phi$, then the $G_\tx{ad}(\tilde F)$-orbit of the affine generic character $\chi\circ N$ is associated to $\tilde\phi$.

To show that the simply transitive actions are compatible, consider the diagram
\begin{diagram}
X^*(\tx{Cent}(\phi,\hat G))&&\rTo&&X^*(\tx{Cent}(\tilde\phi,\hat G))\\
\dEquals&&&&\dEquals\\
X_*(S)_{\Gamma_{F}}&&\rTo&&X_*(S)_{\Gamma_{\tilde F}}\\
\dEquals&&&&\dEquals\\
H^1(F,S)&&\rTo&&H^1(\tilde F,S)\\
\dEquals&&&&\dEquals\\
\frac{G_\tx{ad}(F)}{G(F)S_\tx{ad}(F)}&&\rTo&&\frac{G_\tx{ad}(\tilde F)}{G(\tilde F)S_\tx{ad}(\tilde F)}
\end{diagram}
where the top vertical maps are induced by $^Lj$ and $^L\tilde j$, the middle vertical maps are the Tate-Nakayama isomorphisms, and the bottom vertical maps arise from the connecting homomorphisms. The top horizontal map is the dual norm map, while the bottom horizontal map is induced by the inclusion $G_\tx{ad}(F) \rw G_\tx{ad}(\tilde F)$. From this it follows that
if $\rho\cdot\textrm{c-Ind}\chi=\textrm{c-Ind}\chi'$, then $\tilde\rho \cdot \textrm{c-Ind}[\chi\circ N] = \textrm{c-Ind}[\chi'\circ N]$, which was to be shown.

To complete the proof of the Proposition, choose a Borel subgroup $B$ containing a maximal torus whose apartment contains $x$, and let $\psi$ be a character of $B_u(F)$ whose restriction to $B_u(O_F)$ coincides with $\chi$. Then $\pi=\textrm{c-Ind}\chi$ is $(B,\psi)$ generic, and at the same time $\tilde\pi=\textrm{c-Ind}\tilde\chi$ is $(B\times \tilde F,\psi\circ N)$-generic. Thus $\pi$ and $\tilde\pi$ correspond to the same element of $X^*(\tx{Cent}(\phi,\hat G))$.

\end{proof}

\section{Stability} \label{sec:stab}

Let $x \in \mc{B}(G,F)$ be the barycenter of an alcove,
\[ \chi : Z(F)G(F)_{x,\frac{1}{h}}/G(F)_{x,\frac{2}{h}} \rw \C^\times \]
be an affine generic character, and $\pi$ the corresponding simple supercuspidal representation. We are going to denote its Harish-Chandra character function by $\Theta_\pi$. Let $\Pi$ be the $L$-packet containing $\pi$ and put
\[ S\Theta_\Pi = \sum_{\pi \in \Pi} \Theta_\pi. \]
One of the conjectural properties of $L$-packets is that the function $S\Theta_\Pi$ should be stable. That is, if $\gamma,\gamma' \in G(F)$ are regular semi-simple elements for which there exists $g \in G(\ol{F})$ that conjugates the one to the other, than
\[ S\Theta_\Pi(\gamma) = S\Theta_\Pi(\gamma'). \]
This function can be studied independently of our construction of the $L$-packet $\Pi$, if one takes $\Pi$ to simply be the orbit of $\pi$ under $G_\tx{ad}(F)$. We have learned from Stephen DeBacker that one can use the results of Adler-Spice \cite{AdSp09} to prove the stability of $S\Theta_\Pi$ under the assumption that the residue field $k_F$ has sufficiently large characteristic. In this section, we are going to provide a different indication of stability. We will show that without additional assumptions on $k_F$ (beyond the standing assumption that $\tx{char}(k_F)$ does not divide the Coxeter number of $G$), one can show that on a certain open set of regular semi-simple elements, the function $S\Theta_\Pi$ is atomically stable -- i.e. it is stable, and no sum of characters of a proper subset of $\Pi$ is stable. This result is an indication that the stability of $\Theta_\Pi$ (at all regular semi-simple elements) holds without further restrictions on $k_F$, but we have not been able to find a proof for this statement yet.

\begin{pro} Let $j: S' \rw G$ be any embedding of type (C), and let $\gamma \in j(S')(F)$ be a regular element. If $\gamma' \in G(F)$ is stably conjugate to $\gamma$, then $S\Theta_\Pi(\gamma)=S\Theta_\Pi(\gamma')$. Moreover, if $\Pi'$ is a proper subset of $\Pi$, then $S\Theta_{\Pi'}(\gamma) \neq S\Theta_{\Pi'}(\gamma')$.
\end{pro}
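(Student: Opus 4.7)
My plan is to compute $\Theta_\pi(\gamma)$ via the Frobenius formula for compactly induced representations:
\[
\Theta_\pi(\gamma) \;=\; \sum_{[g] \in G(F)/K,\ g^{-1}\gamma g \in K} \chi(g^{-1}\gamma g),
\qquad K = Z(F)G(F)_{x,\frac{1}{h}}.
\]
The first task is to identify the cosets that contribute. Write $\gamma = z\gamma_+$ using Proposition \ref{pro:sprod}, with $z \in Z(G)(F)$ and $\gamma_+ \in S'(F)_\frac{1}{h}$ (Proposition \ref{pro:sfilt}); regularity of $\gamma$ forces $\gamma_+$ to have nonzero image in $S'(F)_\frac{1}{h}/S'(F)_\frac{2}{h}$. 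The condition $g^{-1}\gamma g \in K$ then reduces to $g^{-1}\gamma_+ g \in G(F)_{x,\frac{1}{h}}$, and by the Moy--Prasad structure theory combined with Proposition \ref{pro:bary} and Corollary \ref{cor:adstab}, I expect this to be equivalent to the twisted embedding $j_g := \tx{Ad}(g^{-1})\circ j$ being itself an embedding of type (C) whose associated point is $x$. Hence the contributing $g$ are parametrized by $F$-rational representatives $j'$ of the stable class $[j]$ (with $x$ as their associated point), modulo the stabilizer of $\chi$.

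The second step is to sum over the packet. Using the description $\Pi \cong G_\tx{ad}(F)/G(F)S_\tx{ad}(F)$ from Section \ref{sec:int}, I would unfold
\[
S\Theta_\Pi(\gamma) \;=\; \sum_{[h]\in G_\tx{ad}(F)/G(F)S_\tx{ad}(F)} \Theta_\pi(h^{-1}\gamma h)
\]
and reparametrize the combined double sum via $m = hg \in G_\tx{ad}(F)$. The outcome should be an identity of the form
\[
S\Theta_\Pi(\gamma) \;=\; \sum_{j' \in [j]_\tx{stab}(F),\,\gamma \in j'(S')(F)} \chi_{S',j'}(\gamma),
\]
where $j'$ ranges over $F$-rational representatives of the stable class of $j$ and $\chi_{S',j'}$ is the character of $S'(F)$ obtained from $\phi$ via the factoring procedure of Section \ref{sec:pack} applied to $j'$. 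The crucial matching $\chi(g^{-1}\gamma g) = \chi_{S',j_g}(\gamma)$ amounts to verifying the equivariance of the passage from $\chi_S$ to $\chi_j$ under the $G_\tx{ad}(F)_x$-action on embeddings, and this uses the definition \eqref{eq:homgen} together with Proposition \ref{pro:afgenproj}.

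Once this character formula is in place, both stability and atomicity should be almost immediate. For stability, if $\gamma' = g^{-1}\gamma g$ with $g \in G(\ol{F})$, the assignment $j' \mapsto \tx{Ad}(g^{-1})\circ j'$ is a bijection between embeddings having $\gamma$ in their image and embeddings having $\gamma'$ in their image, and since $\chi_{S',j'}$ depends only on the stable class of $j'$ (which is fixed throughout), the two sums coincide. For atomicity, a proper subset $\Pi' \subsetneq \Pi$ corresponds via Section \ref{sec:int} to a proper nonempty subset of $G_\tx{ad}(F)/G(F)S_\tx{ad}(F)$, hence to a proper subset of the indexing set above. Choosing $\gamma'$ to be a stable conjugate whose associated parametrizing embedding lies outside this subset produces a difference $S\Theta_{\Pi'}(\gamma) - S\Theta_{\Pi'}(\gamma')$ which is a nontrivial $\Z$-linear combination of distinct characters of $S'(F)$, and is therefore nonzero by linear independence of characters.

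The main obstacle is the second step: precisely establishing the character-formula identity, and in particular verifying the equivariance that identifies $\chi(g^{-1}\gamma g)$ with $\chi_{S',j_g}(\gamma)$ as $g$ varies. This requires carefully tracking how the construction of the affine generic character from the torus character, via \eqref{eq:homgen} and Proposition \ref{pro:afgenproj}, interacts with $G_\tx{ad}(F)_x$-conjugation of the embedding $j$, and is the technical heart of the argument.
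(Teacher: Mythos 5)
Your first step is sound in spirit (the Frobenius sum is just the unfolded form of the Harish-Chandra integral formula that the paper also uses), but the second step --- which you yourself flag as the technical heart --- contains a genuine gap, and in fact rests on two claims that are false or unjustified. First, regularity of $\gamma$ does \emph{not} force $\gamma_+$ to have nonzero image in $S'(F)_{\frac{1}{h}}/S'(F)_{\frac{2}{h}}$: a regular element of $j(S')(F)$ may lie arbitrarily deep in the Moy--Prasad filtration (regularity only says $\alpha(\gamma)\neq 1$ for all roots, not that the depth is exactly $\frac{1}{h}$), and the proposition is asserted for \emph{all} regular $\gamma \in j(S')(F)$. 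Second, and consequently, the asserted equivalence ``$g^{-1}\gamma g \in K$ iff $\tx{Ad}(g^{-1})\circ j$ is an embedding of type (C) with associated point $x$'' fails in general: for deep $\gamma_+$ the set of cosets conjugating $\gamma$ into $K$ is much larger than the set aligned with rational embeddings at $x$, so the proposed parametrization of the contributing cosets, and with it the character-formula identity $S\Theta_\Pi(\gamma)=\sum_{j'}\chi_{S',j'}(\gamma)$, does not follow. The atomicity argument via linear independence of characters inherits this problem, since it is formulated entirely in terms of that unproven parametrization.

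The missing idea is that no analysis of individual cosets is needed at all. The paper takes the explicit matrix coefficient $\dot\chi$ (equal to $\chi$ on $Z(F)G(F)_{x,\frac{1}{h}}$ and zero elsewhere), writes $\Theta_\pi(\gamma)=\deg(\pi;dx)\int_{G(F)/Z(F)}\dot\chi(x\gamma x^{-1})\,dx$ by \cite[Lemma 22.1]{HC99}, and then uses that $\Pi$ is a single orbit of the finite abelian group $G_\tx{ad}(F)/G(F)$ to collapse the packet sum into one integral
\[ S\Theta_\Pi(\gamma)=C\int_{G_\tx{ad}(F)}\dot\chi(x\gamma x^{-1})\,dx. \]
At that point Corollary \ref{cor:adstab} finishes the argument: any stable conjugate $\gamma'$ of $\gamma\in j(S')(F)$ is of the form $\tx{Ad}(g)\gamma$ with $g\in G_\tx{ad}(F)$, so invariance of Haar measure on $G_\tx{ad}(F)$ gives $S\Theta_\Pi(\gamma)=S\Theta_\Pi(\gamma')$, while a proper subset $\Pi'$ corresponds to integrating over a proper union of cosets, which is not translation-invariant --- whence the failure of stability for $\Pi'$. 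If you want to salvage your route, you would either have to restrict $\gamma$ to suitably generic elements of depth exactly $\frac{1}{h}$ and prove the coset parametrization and the equivariance of \eqref{eq:homgen} in detail, or simply adopt the averaging trick, which makes the entire second step unnecessary.
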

\begin{proof}
Let $\dot\chi : G(F) \rw \C$ denote the function which equals $\chi$ on $Z(F)G(F)_{x,1/h}$ and equals zero outside of this subgroup. This function is an element of the induced representation $\pi$, and since the evaluation map $f \mapsto f(1)$ is a smooth functional on $\pi$, the function $\dot\chi$ is also a matrix coefficient for $\pi$. According to \cite[Lemma 22.1]{HC99}, we have
\[ \Theta_\pi(\gamma) = \deg(\pi;dx) \int_{G(F)/Z(F)} \dot\chi(x\gamma x^{-1})dx. \]
By construction, the finite abelian group $G_\tx{ad}(F)/G(F)$ acts transitively on $\Pi$. It follows that
\[ S\Theta_\pi(\gamma) = C\int_{G_\tx{ad}(F)} \dot\chi(x\gamma x^{-1})dx, \]
where $C$ is the positive constant given by the ratio of $\deg(\pi;dx)$ and the size of the stabilizer of $\pi$ in $G_\tx{ad}(F)/G(F)$. The claim now follows from Corollary \ref{cor:adstab}.
\end{proof}

Note that the embedding $j$ in the statement of the proposition need not be the one used in the construction of the packet $\Pi$.

{\small
Tasho Kaletha\\
tkaletha@math.ias.edu\\
School of Mathematics, Institute for Advanced Study, \\
Einstein Drive, Princeton, NJ 08540
}
\end{document}